\theoremstyle{plain}
\newtheorem{thm}{\protect\theoremname}
  \theoremstyle{definition}
  \newtheorem{defn}[thm]{\protect\definitionname}
  \theoremstyle{plain}
  \newtheorem{prop}[thm]{\protect\propositionname}
  \theoremstyle{definition}
  \newtheorem{example}[thm]{\protect\examplename}
  \theoremstyle{remark}
  \newtheorem{rem}[thm]{\protect\remarkname}
  \theoremstyle{plain}
  \newtheorem{cor}[thm]{\protect\corollaryname}
  \theoremstyle{plain}
  \newtheorem{lem}[thm]{\protect\lemmaname}
  \providecommand{\corollaryname}{Corollary}
  \providecommand{\definitionname}{Definition}
  \providecommand{\examplename}{Example}
  \providecommand{\lemmaname}{Lemma}
  \providecommand{\propositionname}{Proposition}
  \providecommand{\remarkname}{Remark}
\providecommand{\theoremname}{Theorem}
\begin{document}

\title{\textbf{The Caccioppoli Ultrafunctions} }

\author{Vieri Benci, Luigi Carlo Berselli\thanks{corresponding author},
and Carlo Romano Grisanti}

\maketitle
Dipartimento di Matematica, Universit\`a degli Studi di Pisa, Via F.
Buonarroti 1/c, 56127 Pisa, ITALY 
\begin{abstract}
Ultrafunctions are a particular class of functions defined on a hyperreal
field $\mathbb{R}^{\ast}\supset\mathbb{R}$. They have been introduced
and studied in some previous works (\cite{ultra},\cite{belu2012},\cite{belu2013}).
In this paper we introduce a particular space of ultrafunctions which
has special properties, especially in term of localization of functions
together with their derivatives. An appropriate notion of integral
is then introduced which allows to extend in a consistent way the
integration by parts formula, the Gauss theorem and the notion of
perimeter. This new space we introduce, seems suitable for applications
to PDE's and Calculus of Variations. This fact will be illustrated
by a simple, but meaningful example.

\noindent \textbf{Keywor}\textbf{\normalsize{}d}\textbf{s}. Ultrafunctions,
Non Archimedean Mathematics, Non Standard Analysis, Delta function,
distributions. 
\end{abstract}
\tableofcontents{}

\section{Introduction\label{sec:Intro}}

The Caccioppoli ultrafunctions can be considered as a kind generalized
functions. In many circumstances, the notion of real function is not
sufficient to the needs of a theory and it is necessary to extend
it. Among people working in partial differential equations, the theory
of distributions of L. Schwartz is the most commonly used, but other
notions of generalized functions have been introduced by J.F. Colombeau
\cite{col85} and M. Sato \cite{sa59,sa60}. This paper deals with
a new kind of generalized functions, called ``ultrafunctions'', which
have been introduced recently in \cite{ultra} and developed in \cite{belu2012,belu2013,milano,algebra,beyond}.
They provide generalized solutions to certain equations which do not
have any solution, not even among the distributions. 

Actually, the ultrafunctions are pointwise defined on a subset of
$\left(\mathbb{R}^{\ast}\right)^{N},$ where $\mathbb{R}^{\ast}$
is the field of hyperreal numbes, namely the numerical field on which
nonstandard analysis (NSA in the sequel) is based. We refer to Keisler
\cite{keisler76} for a very clear exposition of NSA and in the following,
starred quantities are the natural extensions of the corresponding
classical quantities.

The main novelty of this paper is that we introduce the space of Caccioppoli
ultrafunctions $V_{\Lambda}(\Omega)$. They satisfy special properties
which are very powerful in applications to Partial Differential Equations
and Calculus of Variations. The construction of this space is rather
technical, but contains some relevant improvements with respect to
the previous notions present in the literature (see e.g. \cite{ultra,belu2012,belu2013,milano,algebra,beyond,BGG,nap}). 

The main peculiarities of the ultrafunctions in $V_{\Lambda}(\Omega)$
are the following: there exist a generalized partial derivative $D_{i}$
and a generalized integral $\sqint$ (called poinwise integral) such
that 
\begin{enumerate}
\item the generalized derivative is a local operator namely, if $supp(u)\subset E^{*}$
(where $E$ is an open set), then $supp(D_{i}u)\subset E^{*}$.
\item $\forall u,v\in V_{\Lambda}(\Omega)$, 
\begin{equation}
\sqint D_{i}uv\,dx=-\sqint uD_{i}v\,dx\,\,;\label{eq:bellona}
\end{equation}
\item the ``generalized'' Gauss theorem holds for any measurable set $A$
(see Theorem \ref{thm:41})
\[
\sqint_{A}D\cdot\phi\,dx=\sqint_{\partial A}\phi\cdot\mathbf{n}_{A}\,dS\,\,;
\]
\item to any distribution $T\in\mathcal{\mathscr{D}}^{\prime}\left(\Omega\right)$
we can associate an equivalence class of ultrafunctions $\left[u\right]$
such that, $\forall v\in\left[u\right],\,\forall\varphi\in\mathcal{\mathscr{D}}\left(\Omega\right),$
\[
st\left(\sqint v\varphi\text{\textdegree}\,dx\right)=\left\langle T,\varphi\right\rangle ,
\]
where $st(\cdot)$ denotes the standard part of an hyperreal number. 
\end{enumerate}
The most relevant point, which is not present in the previous approaches
to ultrafunctions, is that we are able the extend the notion of partial
derivative so that it is a local operator and it satisfies the usual
formula valid when integrating by parts, at the price of a suitable
extension of the integral as well. In the proof of this fact, the
Caccioppoli sets play a fundamental role.

It is interesting to compare the result about the Caccioppoli ultrafunctions
with the well-known Schwartz impossibility theorem:\emph{ ``there
does not exist a differential algebra $(\mathfrak{A},+,\otimes,D)$
in which the distributions can be embedded, where $D$ is a linear
operator that extends the distributional derivative and satisfies
the Leibniz rule (namely $D(u\otimes v)=Du\otimes v+u\otimes Dv$)
and $\otimes$ is an extension of the pointwise product on $\mathcal{C}(\mathbb{R})$.'' }

The ultrafunctions extend the space of distributions; they do not
violate the Schwartz theorem since the Leibniz rule, in general, does
not hold (see Remark \ref{rem:Schw}). Nevertheless, we can prove
the integration by parts rule (\ref{eq:bellona}) and the Gauss' divergence
theorem (with the appropriate extension $\sqint$ of the usual integral),
which are the main tools used in the applications. These results are
a development of the theory previously introduced in \cite{algebra}
and \cite{gauss}.

The theory of ultrafunctions makes deep use of the techniques of NSA
presented via the notion of $\Lambda$-limit. This presentation has
the advantage that a reader, which does not know NSA, is able to follow
most of the arguments. 

In the last section we present some very simple examples to show that
the ultrafunctions can be used to perform a precise mathematical analysis
of problems which are not tractable via the distributions.

\subsection{Plan of the paper}

In section \ref{sec:lt}, we present a summary of the theory of $\Lambda$-limits
and their role in the development of the ultrafunctions using nonstandard
methods, especially in the context of transferring as much as possible
the language of classical analysis. In Section \ref{sec:Ultrafunctions},
we define the notion of ultrafunctions, with emphasis on the $\textit{pointwise integral}$.
In Section \ref{sec:Differential-calculus}, we define the most relevant
notion, namely the generalized derivative, and its connections with
the pointwise integral, together with comparison with the classical
and distributional derivative. In Section \ref{sec:FSU}, we show
how to construct a space satisfying all the properties of the generalized
derivative and integrals. This Section is the most technical and can
be skipped in a first reading. Finally, in Section \ref{sec:One-simple-application},
we present a general result and two very simple variational problem.
In particular, the second problem is very elementary but without solutions
in the standard $H^{1}$-setting. Nevertheless it has a natural and
explicit candidate as solution. We show how this can be described
by means of the language of ultrafunctions. 

\subsection{Notations\label{sec:not}}

If $X$ is a set and $\Omega$\ is a subset of $\mathbb{R}^{N}$,
then 
\begin{itemize}
\item $\mathcal{P}\left(X\right)$ denotes the power set of $X$ and $\mathcal{P}_{fin}\left(X\right)$
denotes the family of finite subsets of $X;$ 
\item $\mathfrak{F}\left(X,Y\right)$ denotes the set of all functions from
$X$ to $Y$ and $\mathfrak{F}\left(\Omega\right)=\mathfrak{F}\left(\Omega,\mathbb{R}\right)$; 
\item $\mathscr{\mathcal{C}}\left(\Omega\right)$ denotes the set of continuous
functions defined on $\Omega;$ 
\item $\mathcal{C}^{k}\left(\Omega\right)$ denotes the set of functions
defined on $\Omega$ which have continuous derivatives up to the order
$k$;
\item $H^{k,p}\left(\Omega\right)$ denotes the usual Sobolev space of functions
defined on $\Omega$; 
\item if $W\left(\Omega\right)$ is any function space, then $W_{c}\left(\Omega\right)$
will denote de function space of functions in $W\left(\Omega\right)$
having compact support; 
\item $\mathcal{C}_{0}\left(\Omega\cup\Xi\right),\,\,\Xi\subseteq\partial\Omega,$
denotes the set of continuous functions in $\mathcal{C}\left(\Omega\cup\Xi\right)\ $
which vanish for $x\in\Xi$;
\item $\mathcal{D}\left(\Omega\right)$ denotes the set of the infinitely
differentiable functions with compact support defined on $\Omega;\ \mathcal{D}^{\prime}\left(\Omega\right)$
denotes the topological dual of $\mathcal{D}\left(\Omega\right)$,
namely the set of distributions on $\Omega;$ 
\item for any $\xi\in\left(\mathbb{R}^{N}\right)^{\ast},\rho\in\mathbb{R}^{\ast}$,
we set $\mathfrak{B}_{\rho}(\xi)=\left\{ x\in\left(\mathbb{R}^{N}\right)^{\ast}:\ |x-\xi|<\rho\right\} $; 
\item $\mathfrak{supp}(f)=(supp(f))^{*}$ where $supp$ is the usual notion
of support of a function or a distribution;
\item $\mathfrak{mon}(x)=\{y\in\left(\mathbb{R}^{N}\right)^{\ast}:x\sim y\}$
where $x\sim y$ means that $x-y$ is infinitesimal;
\item $\mathfrak{gal}(x)=\{y\in\left(\mathbb{R}^{N}\right)^{\ast}:x-y\text{ is finite}\}$
;
\item if $W$ is a generic function space, its topological dual will be
denoted by $W^{\prime}$ and the pairing by $\left\langle \cdot,\cdot\right\rangle _{W}$
\item we denote by $\chi_{X}$ the indicator (or characteristic) function
of $X$, namely
\[
\chi_{X}(x)=\begin{cases}
1 & if\,\,\,x\in X\\
0 & if\,\,\,x\notin X\,;
\end{cases}
\]
\item $|X|$ will denote the cardinality of $X$.
\end{itemize}

\section{$\Lambda$-theory\label{sec:lt}}

In this section we present the basic notions of Non Archimedean Mathematics
and of Nonstandard Analysis, following a method inspired by \cite{BDN2003}
(see also \cite{ultra} and \cite{belu2012}).

\subsection{Non Archimedean Fields\label{naf}}

Here, we recall the basic definitions and facts regarding non-Archimedean
fields. In the following, ${\mathbb{K}}$ will denote an ordered field.
We recall that such a field contains (a copy of) the rational numbers.
Its elements will be called numbers. 
\begin{defn}
Let $\mathbb{K}$ be an ordered field and $\xi\in\mathbb{K}$. We
say that: 
\end{defn}
\begin{itemize}
\item $\xi$ is infinitesimal if, for all positive $n\in\mathbb{N}$, $|\xi|<\frac{1}{n}$; 
\item $\xi$ is finite if there exists $n\in\mathbb{N}$ such as $|\xi|<n$; 
\item $\xi$ is infinite if, for all $n\in\mathbb{N}$, $|\xi|>n$ (equivalently,
if $\xi$ is not finite). 
\end{itemize}
\begin{defn}
An ordered field $\mathbb{K}$ is called Non-Archimedean if it contains
an infinitesimal $\xi\neq0$. 
\end{defn}
It is easily seen that all infinitesimal are finite, that the inverse
of an infinite number is a nonzero infinitesimal number, and that
the inverse of a nonzero infinitesimal number is infinite. 
\begin{defn}
A superreal field is an ordered field $\mathbb{K}$ that properly
extends $\mathbb{R}$. 
\end{defn}
It is easy to show, due to the completeness of $\mathbb{R}$, that
there are nonzero infinitesimal numbers and infinite numbers in any
superreal field. Infinitesimal numbers can be used to formalize a
new notion of closeness: 
\begin{defn}
\label{def infinite closeness} We say that two numbers $\xi,\zeta\in{\mathbb{K}}$
are infinitely close if $\xi-\zeta$ is infinitesimal. In this case,
we write $\xi\sim\zeta$. 
\end{defn}
Clearly, the relation $\sim$ of infinite closeness is an equivalence
relation and we have the following theorem
\begin{thm}
If $\mathbb{K}$ is a superreal field, every finite number $\xi\in\mathbb{K}$
is infinitely close to a unique real number $r\sim\xi$, called the
\textbf{standard part} of $\xi$. 
\end{thm}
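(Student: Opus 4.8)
The plan is to produce the standard part by a Dedekind-style construction inside $\mathbb{R}$, whose completeness is the only substantial ingredient for existence. First I would fix a finite $\xi\in\mathbb{K}$ and, using the copy of $\mathbb{R}$ sitting inside $\mathbb{K}$, form the set
\[
A=\left\{ r\in\mathbb{R}:r<\xi\right\} ,
\]
where the inequality is evaluated in the ordered field $\mathbb{K}$. Since $\xi$ is finite there is $n\in\mathbb{N}$ with $|\xi|<n$; hence $-n\in A$, so $A$ is nonempty, and every element of $A$ is bounded above by $n$. Thus $A$ is a nonempty subset of $\mathbb{R}$ that is bounded above, and by the completeness of $\mathbb{R}$ the real number $r_{0}:=\sup A$ exists. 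This $r_{0}$ is my candidate for the standard part.

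The core step is to verify that $r_{0}\sim\xi$, and this is where I expect the only genuine care to be required. I would argue by contradiction: if $r_{0}-\xi$ were not infinitesimal, then by the definition of infinitesimal there is a positive integer $m$ with $|r_{0}-\xi|\ge\tfrac{1}{m}$. If $\xi>r_{0}$, then $\xi\ge r_{0}+\tfrac{1}{m}$, so the real number $r_{0}+\tfrac{1}{2m}$ still lies strictly below $\xi$ and therefore belongs to $A$, contradicting that $r_{0}$ is an upper bound of $A$. If instead $\xi<r_{0}$, then $\xi\le r_{0}-\tfrac{1}{m}$, so $r_{0}-\tfrac{1}{2m}$ is an upper bound of $A$ strictly smaller than $r_{0}$, contradicting that $r_{0}$ is the \emph{least} upper bound. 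In either case we reach a contradiction, so $r_{0}-\xi$ must be infinitesimal, that is, $r_{0}\sim\xi$.

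Finally I would settle uniqueness using that $\mathbb{R}$ itself is Archimedean. Suppose $r_{0},r_{1}\in\mathbb{R}$ both satisfy $r_{0}\sim\xi$ and $r_{1}\sim\xi$. Since $\sim$ is an equivalence relation, as already noted above, transitivity gives $r_{0}\sim r_{1}$, so $r_{0}-r_{1}$ is an infinitesimal lying in $\mathbb{R}$. But the only infinitesimal real number is $0$, because $\mathbb{R}$ is Archimedean; hence $r_{0}=r_{1}$. This simultaneously shows that the assignment $\xi\mapsto r_{0}$ is well defined, completing the argument. The whole proof thus rests on the completeness of $\mathbb{R}$ for existence and on the Archimedean property of $\mathbb{R}$ for uniqueness, with the case analysis in the middle step being the only delicate point.
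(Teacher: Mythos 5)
Your proof is correct. The paper states this theorem without proof, treating it as a standard fact about superreal fields; your argument --- taking $r_{0}=\sup\{r\in\mathbb{R}:r<\xi\}$, using the completeness of $\mathbb{R}$ for existence and the Archimedean property of $\mathbb{R}$ (via transitivity of $\sim$) for uniqueness --- is precisely the canonical proof, and each step (nonemptiness and boundedness of $A$, the two-case contradiction showing $r_{0}\sim\xi$, and the uniqueness argument) is carried out correctly.
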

Given a finite number $\xi$, we denote its standard part by $st(\xi)$,
and we put $st(\xi)=\pm\infty$ if $\xi\in\mathbb{K}$ is a positive
(negative) infinite number. 
\begin{defn}
Let $\mathbb{K}$ be a superreal field, and $\xi\in\mathbb{K}$ a
number. The monad of $\xi$ is the set of all numbers that are infinitely
close to it: 
\[
\mathfrak{m}\mathfrak{o}\mathfrak{n}(\xi)=\{\zeta\in\mathbb{K}:\xi\sim\zeta\},
\]
and the galaxy of $\xi$ is the set of all numbers that are finitely
close to it: 
\[
\mathfrak{gal}(\xi)=\{\zeta\in\mathbb{K}:\xi-\zeta\ \text{is\ finite}\}
\]
\end{defn}
By definition, it follows that the set of infinitesimal numbers is
$\mathfrak{mon}(0)$ and that the set of finite numbers is $\mathfrak{gal}(0)$.

\subsection{The $\Lambda$-limit\label{subsec:Lambda-limit}}

In this section we introduce a particular non-Archimedean field by
means of $\Lambda$-theory \footnote{Readers expert in nonstandard analysis will recognize that $\Lambda$-theory
is equivalent to the superstructure constructions of Keisler (see
\cite{keisler76} for a presentation of the original constructions
of Keisler). } (for complete proofs and further informations the reader is referred
to \cite{benci99}, \cite{ultra} and \cite{belu2012}). To recall
the basics of $\Lambda$-theory we have to recall the notion of superstructure
on a set (see also \cite{keisler76}): 
\begin{defn}
Let $E$ be an infinite set. The superstructure on $E$ is the set
\[
V_{\infty}(E)=\bigcup_{n\in\mathbb{N}}V_{n}(E),
\]
where the sets $V_{n}(E)$ are defined by induction setting 
\[
V_{0}(E)=E
\]
and, for every $n\in\mathbb{N}$, 
\[
V_{n+1}(E)=V_{n}(E)\cup\mathcal{P}\left(V_{n}(E)\right).
\]

Here $\mathcal{P}\left(E\right)$ denotes the power set of $E.$ Identifying
the couples with the Kuratowski pairs and the functions and the relations
with their graphs, it follows that $V_{\infty}(E)$ contains almost
every usual mathematical object that can be constructed starting with
$E;$ in particular, $V_{\infty}(\mathbb{R})$, which is the superstructure
that we will consider in the following, contains almost every usual
mathematical object of analysis.
\end{defn}
Throughout this paper we let 
\[
\mathfrak{L}=\mathcal{P}_{fin}(V_{\infty}(\mathbb{R}))
\]
and we order $\mathfrak{L}$ via inclusion. Notice that $(\mathfrak{L},\subseteq)$
is a directed set. We add to $\mathfrak{L}$ a \emph{point at infinity}
$\Lambda\notin\mathfrak{L}$, and we define the following family of
neighborhoods of $\Lambda:$ 
\[
\{\{\Lambda\}\cup Q\mid Q\in\mathcal{U}\},
\]
where $\mathcal{U}$ is a \emph{fine ultrafilter} on $\mathfrak{L}$,
namely a filter such that 
\begin{itemize}
\item for every $A,B\subseteq\mathfrak{L}$, if $A\cup B=\mathfrak{L}$
then $A\in\mathcal{U}$ or $B\in\mathcal{U}$; 
\item for every $\lambda\in\mathfrak{L}$ the set $Q(\lambda):=\{\mu\in\mathfrak{L}\mid\lambda\subseteq\mu\}\in\mathcal{U}$. 
\end{itemize}
In particular, we will refer to the elements of $\mathcal{U}$ as
qualified sets and we will write $\Lambda=\Lambda(\mathcal{U})$ when
we want to highlight the choice of the ultrafilter. A function $\varphi:\,\mathfrak{L}\rightarrow E$
will be called \emph{net} (with values in E). If $\varphi(\lambda)$
is a real net, we have that

\[
\lim_{\lambda\rightarrow\Lambda}\varphi(\lambda)=L
\]
if and only if\medskip{}
\[
\forall\varepsilon>0,\,\exists Q\text{\ensuremath{\in}}\mathcal{U},\,\,such\,\,that\,\,\forall\lambda\text{\ensuremath{\in}}Q,\,|\varphi(\lambda)-L|<\varepsilon.
\]

\medskip{}

As usual, if a property $P(\lambda)$ is satisfied by any $\lambda$
in a neighborhood of $\Lambda$, we will say that it is \textbf{\emph{eventually}}
satisfied.

Notice that the $\Lambda$-topology satisfies these interesting properties:
\begin{prop}
\label{nino}If the net $\varphi(\lambda)$ takes values in a compact
set $K$, then it is a converging net. 
\end{prop}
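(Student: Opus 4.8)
The plan is to combine the two features that make the $\Lambda$-topology special: the fact that $\mathcal{U}$ is an \emph{ultra}filter (so that for every $A\subseteq\mathfrak{L}$ exactly one of $A$ and $\mathfrak{L}\setminus A$ is qualified) together with the open-cover definition of compactness of $K$. First I would reformulate convergence in neighborhood terms: unwinding the $\varepsilon$-definition of $\lim_{\lambda\rightarrow\Lambda}$, one has $\lim_{\lambda\rightarrow\Lambda}\varphi(\lambda)=L$ precisely when, for every neighborhood $U$ of $L$, the preimage $\varphi^{-1}(U)=\{\lambda\in\mathfrak{L}:\varphi(\lambda)\in U\}$ belongs to $\mathcal{U}$, i.e. is a qualified set. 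So the goal becomes: produce a point $L\in K$ all of whose neighborhoods have qualified preimage.

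Second, I would argue by contradiction. Suppose no $L\in K$ is a limit of $\varphi$. Then for each $L\in K$ there is an open neighborhood $U_{L}$ with $\varphi^{-1}(U_{L})\notin\mathcal{U}$; since $\mathcal{U}$ is an ultrafilter, this forces the complement $\mathfrak{L}\setminus\varphi^{-1}(U_{L})\in\mathcal{U}$, and because $\varphi$ takes values in $K$ we may rewrite it as $\varphi^{-1}(K\setminus U_{L})\in\mathcal{U}$. Now $\{U_{L}\}_{L\in K}$ is an open cover of $K$, so by compactness there are finitely many points $L_{1},\dots,L_{n}$ with $K\subseteq\bigcup_{i=1}^{n}U_{L_{i}}$. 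Since a filter is closed under finite intersections, $\bigcap_{i=1}^{n}\varphi^{-1}(K\setminus U_{L_{i}})\in\mathcal{U}$. But this intersection equals $\varphi^{-1}\!\big(K\setminus\bigcup_{i=1}^{n}U_{L_{i}}\big)=\varphi^{-1}(\varnothing)=\varnothing$, which cannot be qualified because $\mathcal{U}$ is a proper filter. This contradiction shows that some $L\in K$ is a limit of $\varphi$, proving the statement.

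I do not expect any single step to be a genuine obstacle, since this is the classical argument that ultrafilter limits into a compact space always exist; the only points requiring care are bookkeeping ones. The first is checking that the $\varepsilon$-formulation stated for real nets really does translate into the neighborhood statement $\varphi^{-1}(U)\in\mathcal{U}$ for a compact $K$ (for $K\subseteq\mathbb{R}$, or $K\subseteq\mathbb{R}^{N}$, this is immediate because balls form a neighborhood base). The second, and the one I regard as the crux, is the identity $\mathfrak{L}\setminus\varphi^{-1}(U_{L})=\varphi^{-1}(K\setminus U_{L})$: it relies essentially on the hypothesis that $\varphi$ is valued in $K$, and without it one could not conclude that the finite intersection is empty. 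If in addition one wants uniqueness of the limit, so that the net is genuinely convergent in the Hausdorff sense, that follows separately: two distinct limits would admit disjoint neighborhoods whose preimages are both qualified, and the intersection of those preimages would again be an empty qualified set, a contradiction.
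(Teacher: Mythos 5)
Your proof is correct, and it takes a genuinely different route from the paper's. The paper argues in two stages: it first invokes compactness of $K$ to extract a subnet of $\varphi(\lambda)$ converging to some $L\in\mathbb{R}$ (a fact it does not prove, and whose notion of ``subnet'' is only formalized afterwards, in Proposition \ref{prop:Ass}); it then uses the ultrafilter dichotomy once, to show that this particular $L$ is the $\Lambda$-limit: if $Q_{\varepsilon}=\left\{ \lambda\in\mathfrak{L}\,|\,|\varphi(\lambda)-L|<\varepsilon\right\} $ were not qualified, its complement $N$ would be, and $|\varphi(\lambda)-L|\geq\varepsilon$ on $N$ is claimed to contradict the existence of the convergent subnet. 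Your argument dispenses with subnets entirely: you characterize $\Lambda$-convergence by qualified preimages of neighborhoods, cover $K$ by open sets $U_{L}$ whose preimages are not qualified, pass to a finite subcover by open-cover compactness, and obtain the contradiction $\emptyset=\bigcap_{i=1}^{n}\varphi^{-1}(K\setminus U_{L_{i}})\in\mathcal{U}$ from closure of the filter under finite intersections. What your route buys: it is self-contained and fully rigorous, whereas the paper's final step is left informal (one must still argue that a convergent subnet meets the qualified set $N$, which depends on how the subnet interacts with $\mathcal{U}$) and its extraction of a convergent subnet is itself a nontrivial compactness fact; moreover your argument works verbatim for nets valued in a compact subset of any Hausdorff space, not just of $\mathbb{R}$, and your closing disjoint-neighborhoods argument settles uniqueness of the limit, a point the paper glosses over. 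What the paper's route buys: it exhibits the $\Lambda$-limit concretely as the limit of a convergent subnet, in line with how subnets are used elsewhere in the paper (e.g. in Proposition \ref{prop:carola}), rather than producing the limit by pure existence.
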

\begin{proof}
Suppose that the net $\varphi(\lambda)$ has a subnet converging to
$L\in\mathbb{R}$. We fix $\varepsilon>0$ arbitrarily and we have
to prove that $Q_{\varepsilon}\in\mathcal{U}$ where
\[
Q_{\varepsilon}=\left\{ \lambda\in\mathfrak{L}\ |\ \left\vert \varphi(\lambda)-L\right\vert <\varepsilon\right\} .
\]
We argue indirectly and we assume that 
\[
Q_{\varepsilon}\notin\mathcal{U}
\]
 Then, by the definition of ultrafilter, $N=\mathfrak{L}\backslash Q_{\varepsilon}\in\mathcal{U}$
and hence
\[
\forall\lambda\in N,\ \left\vert \varphi(\lambda)-L\right\vert \geq\varepsilon.
\]
This contradicts the fact that $\varphi(\lambda)$ has a subnet which
converges to $L.$
\end{proof}
\begin{prop}
\label{prop:Ass}Assume that $\varphi:\,\mathfrak{L}\rightarrow E,$
where $E$ is a first countable topological space; then if
\end{prop}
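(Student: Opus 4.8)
The plan is to convert the $\Lambda$-limit into an ordinary sequential limit in $E$ by testing it against a countable neighborhood base of the limit point, using only one combinatorial fact about $\mathcal{U}$, namely that every qualified set is nonempty because a proper filter cannot contain $\emptyset$. First countability enters precisely because it lets a single sequence carry all the information needed to detect convergence in $E$.

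First I would fix $x_{0}=\lim_{\lambda\rightarrow\Lambda}\varphi(\lambda)$ and, invoking first countability of $E$, choose a countable neighborhood base $\{U_{n}\}_{n\in\mathbb{N}}$ at $x_{0}$; replacing $U_{n}$ by $U_{1}\cap\cdots\cap U_{n}$ I may assume it is decreasing. By the very definition of the $\Lambda$-limit, for each $n$ the set
\[
Q_{n}=\{\lambda\in\mathfrak{L}\mid\varphi(\lambda)\in U_{n}\}
\]
is qualified, that is $Q_{n}\in\mathcal{U}$. Since $\mathcal{U}$ is a proper filter, each $Q_{n}$ is nonempty, so I may pick $\lambda_{n}\in Q_{n}$. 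Then $\varphi(\lambda_{n})\in U_{n}$ for every $n$, and because $\{U_{n}\}$ is a decreasing base at $x_{0}$ this forces $\varphi(\lambda_{n})\rightarrow x_{0}$ in $E$, which is the desired conclusion.

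The step that deserves attention is not the selection itself but the correct placement of the hypothesis. One should resist the temptation to regard $(\lambda_{n})$ as a subnet of $\varphi$: a countable chain in $\mathfrak{L}=\mathcal{P}_{fin}(V_{\infty}(\mathbb{R}))$ has countable union, whereas cofinality would require it to absorb every finite subset of the uncountable set $V_{\infty}(\mathbb{R})$, so in fact no sequence in $\mathfrak{L}$ converges to $\Lambda$ at all. This is exactly why first countability must be imposed on the target $E$ and cannot be read off the index set. If one nevertheless wants the witnessing sequence to be increasing, fineness delivers it at no cost: each $Q(\lambda_{n-1})=\{\mu\mid\lambda_{n-1}\subseteq\mu\}$ is qualified, hence $Q_{n}\cap Q(\lambda_{n-1})\in\mathcal{U}$ is again nonempty and one picks $\lambda_{n}$ there. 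In short, the entire content is the interplay between the countable base in $E$ and the nonemptiness of qualified sets, and the only genuine subtlety is conceptual, namely locating where first countability is actually used.
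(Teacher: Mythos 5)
Your proof is correct and is exactly the direct argument from the definitions that the paper itself gestures at (its entire proof reads ``It follows easily from the definitions''): fix a decreasing countable neighborhood base $\{U_n\}$ at $x_0$, note that $Q_n=\{\lambda\in\mathfrak{L}\mid\varphi(\lambda)\in U_n\}$ lies in $\mathcal{U}$ by upward closure, and use properness of the filter to pick $\lambda_n\in Q_n$, forcing $\varphi(\lambda_n)\rightarrow x_0$. Your side remark is also accurate: since a countable family in $\mathfrak{L}=\mathcal{P}_{fin}(V_{\infty}(\mathbb{R}))$ has countable union, no sequence is cofinal in $\mathfrak{L}$ (nor converges to $\Lambda$), which is precisely why the paper must declare $\varphi(\lambda_n)$ a ``subnet'' by convention rather than in the standard net-theoretic sense.
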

\[
\lim_{\lambda\rightarrow\Lambda}\varphi(\lambda)=x_{0},
\]
\emph{there exists a sequence $\left\{ \lambda_{n}\right\} $ in}
$\mathfrak{L}$\emph{ such that
\[
\lim_{n\rightarrow\infty}\varphi(\lambda_{n})=x_{0}
\]
We refer to the sequence $\varphi_{n}:=\varphi(\lambda_{n})$ as a
subnet of $\varphi(\lambda)$.}
\begin{proof}
It follows easily from the definitions.
\end{proof}
\begin{example}
Let $\varphi:\mathfrak{\,L}\rightarrow V$ be a net with value in
bounded set of a reflexive Banach space equipped with the weak topology;
then 
\[
v:=\lim_{\lambda\rightarrow\Lambda}\varphi(\lambda),
\]
is uniquely defined and there exists a sequence $n\mapsto\varphi(\lambda_{n})$
which converges to $v$. 
\end{example}
\bigskip{}

\begin{defn}
The set\emph{ }of the hyperreal numbers $\mathbb{R}^{\ast}\supset\mathbb{R}$
is a set equipped with a topology $\tau$ such that
\begin{itemize}
\item every net $\varphi:\,\mathfrak{L}\rightarrow\mathbb{R}$ has a unique
limit in $\mathbb{R}^{\ast}$, if $\mathfrak{L}$ and $\mathbb{R}^{\ast}$
are equipped with the $\Lambda$ and the $\tau$ topology respectively;
\item $\mathbb{R}^{\ast}$ is the closure of $\mathbb{R}$ with respect
to the topology $\tau$;
\item $\tau$ is the coarsest topology which satisfies the first property. 
\end{itemize}
\end{defn}
\medskip{}

\textit{\emph{The existence of such}} $\mathbb{R}^{\ast}$ \textit{\emph{is
a well known fact in NSA. The limit }}\emph{$\xi\in\mathbb{R}^{\ast}$}\textit{\emph{
of a net}} $\varphi:\,\mathfrak{L}\rightarrow\mathbb{R}$\textit{\emph{
with respect to the}} $\tau$\textit{\emph{ topology, following \cite{ultra},
is called the}}\emph{ $\Lambda$-limit }\textit{\emph{of}}\emph{\ $\varphi$}\textit{\emph{
and the following notation will be used:}}
\begin{equation}
\xi=\lim_{\lambda\uparrow\Lambda}\varphi(\lambda)\label{eq:lambdauno}
\end{equation}
namely, we shall use the up-arrow ``$\uparrow$'' to remind that
the target space is equipped with the topology $\tau$.

Given 
\[
\xi:=\lim_{\lambda\uparrow\Lambda}\varphi(\lambda)\,\,and\,\,\eta:=\lim_{\lambda\uparrow\Lambda}\psi(\lambda),
\]
we set
\begin{equation}
\xi+\eta:=\lim_{\lambda\uparrow\Lambda}\left(\varphi(\lambda)+\psi(\lambda)\right),\label{eq:su}
\end{equation}
and 
\begin{equation}
\xi\cdot\eta:=\lim_{\lambda\uparrow\Lambda}\left(\varphi(\lambda)\cdot\psi(\lambda)\right).\label{eq:pr}
\end{equation}

Then the following well known theorem holds:
\begin{thm}
The definitions (\ref{eq:su}) and (\ref{eq:pr}) are well posed and
$\mathbb{R}^{*}$, equipped with these operations, is a non-Archimedean
field.
\end{thm}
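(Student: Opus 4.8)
The plan is to reduce everything to the single observation that, for real nets, equality of $\Lambda$-limits is governed by the ultrafilter $\mathcal{U}$. Concretely, I would first establish the characterization that for $\varphi,\psi:\mathfrak{L}\rightarrow\mathbb{R}$ one has $\lim_{\lambda\uparrow\Lambda}\varphi(\lambda)=\lim_{\lambda\uparrow\Lambda}\psi(\lambda)$ if and only if the coincidence set $\{\lambda\in\mathfrak{L}\ |\ \varphi(\lambda)=\psi(\lambda)\}$ is qualified, i.e.\ belongs to $\mathcal{U}$. The ``if'' direction is immediate: agreement on a qualified set $Q\in\mathcal{U}$ means the two nets are \emph{eventually} equal (since $\{\Lambda\}\cup Q$ is a neighbourhood of $\Lambda$), and the $\tau$-limit depends only on the eventual behaviour of a net; the ``only if'' direction is the content of the fact, recalled as known in the excerpt, that the $\tau$-limit realizes the ultrapower $\mathbb{R}^{\mathfrak{L}}/\mathcal{U}$, uniqueness of the limit sending distinct classes to distinct points.

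Granting this, well-posedness of (\ref{eq:su}) and (\ref{eq:pr}) is a one-line filter computation. If $A=\{\varphi=\varphi'\}$ and $B=\{\psi=\psi'\}$ lie in $\mathcal{U}$, then $A\cap B\in\mathcal{U}$ because a filter is closed under finite intersection, and on $A\cap B$ one has pointwise $\varphi+\psi=\varphi'+\psi'$ and $\varphi\cdot\psi=\varphi'\cdot\psi'$. Hence the sum-net and the product-net each agree with their primed counterparts on a qualified set and, by the characterization above, have $\Lambda$-limits independent of the chosen representatives.

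Next I would verify the field axioms. Commutativity, associativity and distributivity all hold for the representing nets pointwise in $\mathbb{R}$, so they pass to the $\Lambda$-limits through (\ref{eq:su})--(\ref{eq:pr}); the neutral elements are the $\Lambda$-limits of the constant nets $0$ and $1$, and the additive inverse of $\xi=\lim_{\lambda\uparrow\Lambda}\varphi(\lambda)$ is $\lim_{\lambda\uparrow\Lambda}(-\varphi(\lambda))$. The only axiom that is not purely pointwise is the existence of multiplicative inverses, and here the ultrafilter property is essential: if $\xi\neq 0$ then, by the characterization, $\{\varphi=0\}\notin\mathcal{U}$, so its complement $N=\{\lambda\ |\ \varphi(\lambda)\neq 0\}$ is qualified (for an ultrafilter exactly one of a set and its complement is qualified). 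Setting $\psi(\lambda)=\varphi(\lambda)^{-1}$ on $N$ and $\psi(\lambda)=0$ elsewhere gives $\varphi\cdot\psi=1$ on $N$, whence $\xi\cdot\lim_{\lambda\uparrow\Lambda}\psi(\lambda)=1$. The total order is defined by $\xi<\eta\iff\{\varphi<\psi\}\in\mathcal{U}$, and trichotomy follows because the three pairwise disjoint sets $\{\varphi<\psi\}$, $\{\varphi=\psi\}$, $\{\varphi>\psi\}$ cover $\mathfrak{L}$, so by the prime property of $\mathcal{U}$ at least one of them is qualified while disjointness forces exactly one; compatibility with $+$ and $\cdot$ is again pointwise on qualified sets.

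Finally, to see that $\mathbb{R}^{\ast}$ is non-Archimedean I would exhibit an explicit nonzero infinitesimal. Consider the net $\varphi(\lambda)=\tfrac{1}{1+|\lambda|}$ and set $\eta=\lim_{\lambda\uparrow\Lambda}\varphi(\lambda)$. Since $\varphi(\lambda)>0$ for every $\lambda$, the set $\{\varphi=0\}$ is empty and hence not qualified, so $\eta\neq 0$, and indeed $\eta>0$. For each positive $n\in\mathbb{N}$ choose any $\mu\in\mathfrak{L}$ with $|\mu|\geq n$; then $Q(\mu)\subseteq\{\lambda\ |\ |\lambda|\geq n\}\subseteq\{\lambda\ |\ \varphi(\lambda)<\tfrac{1}{n}\}$, and since $Q(\mu)\in\mathcal{U}$ by fineness and $\mathcal{U}$ is closed upward, $\{\varphi<\tfrac1n\}\in\mathcal{U}$, i.e.\ $\eta<\tfrac1n$. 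Thus $0<\eta<\tfrac1n$ for all $n$, so $\eta$ is a nonzero infinitesimal and $\mathbb{R}^{\ast}$ is non-Archimedean. The only genuinely delicate point in the whole argument is the qualified-set characterization of limit-equality underlying every step; once it is in hand, well-posedness, the field axioms (save the inverse, which leans on the prime property of the ultrafilter) and the non-Archimedean witness are all routine filter bookkeeping.
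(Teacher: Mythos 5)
Your proposal cannot be compared line-by-line with the paper's own argument for a simple reason: the paper gives no proof of this theorem at all. It is presented as a ``well known theorem'' (with the reader referred, earlier in the section, to the cited works of Benci et al.\ for complete proofs), and the only internal evidence is the remark immediately following it, which records that $\mathbb{R}^{*}$ is isomorphic to the ultrapower $\mathbb{R}^{\mathfrak{L}}/\mathfrak{I}$. Your proof is correct and is essentially the standard \L{}o\'s-type ultrapower argument that those references contain. Your key lemma --- $\lim_{\lambda\uparrow\Lambda}\varphi=\lim_{\lambda\uparrow\Lambda}\psi$ if and only if $\left\{ \lambda\in\mathfrak{L}\,|\,\varphi(\lambda)=\psi(\lambda)\right\} \in\mathcal{U}$ --- is precisely the content of that remark, and you are right to flag it as the one delicate point: its ``only if'' half does not follow from the three topological axioms defining $\mathbb{R}^{*}$ by themselves (nothing in those axioms, read literally, forces distinct ultrapower classes to have distinct limits), so it must be imported from the standard construction whose existence the paper assumes; your proposal is honest about exactly this. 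Granting the lemma, the rest is sound: well-posedness of (\ref{eq:su}) and (\ref{eq:pr}) by closure of $\mathcal{U}$ under finite intersection, the pointwise field axioms, the ultrafilter dichotomy used exactly where it is unavoidable (multiplicative inverses and trichotomy of the order --- note the order also needs the lemma to be well defined on classes, which you gloss but which is routine), and the explicit witness $\eta=\lim_{\lambda\uparrow\Lambda}\frac{1}{1+|\lambda|}$, whose smallness is correctly extracted from fineness via the sets $Q(\mu)$. One remark on economy: the paper observes in Section \ref{naf} that, by completeness of $\mathbb{R}$, \emph{every} superreal field contains nonzero infinitesimals; so once $\mathbb{R}^{*}$ is known to be an ordered field properly extending $\mathbb{R}$, the non-Archimedean property is automatic. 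Your explicit infinitesimal is a perfectly good alternative --- indeed it simultaneously proves the properness of the extension --- but citing that earlier result would have let you end one step sooner.
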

\begin{rem}
We observe that the field of hyperreal numbers is defined as a sort
of completion of the real numbers. In fact $\mathbb{R}^{*}$ is isomorphic
to the ultrapower
\[
\mathbb{R}\mathfrak{^{L}/\mathfrak{I}}
\]
where
\[
\mathfrak{I}=\left\{ \varphi:\mathfrak{L}\rightarrow\mathbb{R}\,|\,\varphi(\lambda)=0\,\,\,eventually\right\} 
\]
The isomorphism resembles the classical one between the real numbers
and the equivalence classes of Cauchy sequences. This method is well
known for the construction of real numbers starting from rationals. 
\end{rem}

\subsection{Natural extension of sets and functions}

For our purposes it is very important that the notion of $\Lambda$-limit
can be extended to sets and functions (but also to differential and
integral operators) in order to have a much wider set of objects to
deal with, to enlarge the notion of variational problem and of variational
solution. 

So we will define the $\Lambda$-limit of any bounded net of mathematical
objects in $V_{\infty}(\mathbb{R})$ (a net $\varphi:\,\mathfrak{L}\rightarrow V_{\infty}(\mathbb{R})$
is called bounded if there exists $n\in\mathbb{N}$ such that, $\forall\lambda\in\mathcal{\mathfrak{L}},\varphi(\lambda)\in V_{n}(\mathbb{R})$).
To do this, let us consider a net
\begin{equation}
\varphi:\mathcal{\mathfrak{L}}\rightarrow V_{n}(\mathbb{R}).\label{net}
\end{equation}
We will define $\lim_{\lambda\uparrow\Lambda}\varphi(\lambda)$ by
induction on $n$. 
\begin{defn}
For $n=0,$ $\lim_{\lambda\uparrow\Lambda}\varphi(\lambda)$ is defined
by (\ref{eq:lambdauno}). By induction we may assume that the limit
is defined for $n-1$ and we define it for the net (\ref{net}) as
follows:
\[
\lim_{\lambda\uparrow\mathcal{\textrm{\ensuremath{\Lambda}}}}\varphi(\lambda)=\left\{ \lim_{\lambda\uparrow\mathcal{\textrm{\ensuremath{\Lambda}}}}\psi(\lambda)\ |\ \psi:\mathfrak{\mathcal{\mathfrak{L}}}\rightarrow V_{n-1}(\mathbb{R}),\ \forall\lambda\in\mathcal{\mathfrak{L}},\ \psi(\lambda)\in\varphi(\lambda)\right\} .
\]

A mathematical entity (number, set, function or relation) which is
the $\Lambda$-limit of a net is called \textbf{internal}. 
\end{defn}
\bigskip{}

\begin{defn}
If $\forall\lambda\in\mathfrak{L,}$ $E_{\lambda}=E\in V_{\infty}(\mathbb{R}),$
we set $\lim_{\lambda\uparrow\Lambda}\ E_{\lambda}=E^{\ast},\ $namely
\[
E^{\ast}:=\left\{ \lim_{\lambda\uparrow\Lambda}\psi(\lambda)\ |\ \psi(\lambda)\in E\right\} .
\]
$E^{\ast}$ is called the \textbf{natural extension }of $E.$ 
\end{defn}
Notice that, while the $\Lambda$-limit\ of a constant sequence of
numbers gives this number itself, a constant sequence of sets gives
a larger set, namely $E^{\ast}$. In general, the inclusion $E\subseteq E^{\ast}$
is proper.

Given any set $E,$ we can associate to it two sets: its natural extension
$E^{\ast}$ and the set $E^{\sigma},$ where
\begin{equation}
E^{\sigma}=\left\{ X^{\ast}\ |\ X\in E\right\} .\label{sigmaS}
\end{equation}

Clearly $E^{\sigma}$ is a copy of $E$, however it might be different
as set since, in general, $X^{\ast}\neq X.$ 
\begin{rem}
If $\varphi:\mathfrak{\,L}\rightarrow X$ is a net with values in
a topological space we have the usual limit 
\[
\lim_{\lambda\rightarrow\Lambda}\varphi(\lambda),
\]
which, by Proposition \ref{nino}, always exists in the Alexandrov
compactification $X\cup\left\{ \infty\right\} $. Moreover we have
that the $\Lambda$-limit always exists and it is an element of $X^{*}$.
In addition, the $\Lambda$-limit of a net is in $X^{\sigma}$ if
and only if $\varphi$ is eventually constant. If $X=\mathbb{R}$
and both limits exist, then 

\begin{equation}
\lim_{\lambda\rightarrow\Lambda}\varphi(\lambda)=st\left(\lim_{\lambda\uparrow\Lambda}\varphi(\lambda)\right).\label{eq:teresa}
\end{equation}
\end{rem}
The above equation suggests the following definition.
\begin{defn}
\label{def:St}If $X$ is a topological space equipped with a Hausdorff
topology, and $\xi\in X^{*}$ we set
\[
St_{X}\left(\xi\right)=\lim_{\lambda\rightarrow\Lambda}\varphi(\lambda),
\]
if there is a net $\varphi:\mathfrak{\,L}\rightarrow X$ converging
in the topology of $X$ and such that
\[
\xi=\lim_{\lambda\uparrow\Lambda}\varphi(\lambda),
\]
and
\[
St_{X}\left(\xi\right)=\infty
\]
otherwise.
\end{defn}
\medskip{}

By the above definition we have that 
\[
\lim_{\lambda\rightarrow\Lambda}\varphi(\lambda)=St_{X}\left(\lim_{\lambda\uparrow\Lambda}\varphi(\lambda)\right).
\]

\begin{defn}
Let 
\[
f_{\lambda}:\ E_{\lambda}\rightarrow\mathbb{R},\ \ \lambda\in\mathfrak{L},
\]
be a net of functions. We define a function
\[
f:\left(\lim_{\lambda\uparrow\Lambda}\ E_{\lambda}\right)\rightarrow\mathbb{R}^{\ast}
\]
as follows: for every $\xi\in\left(\lim_{\lambda\uparrow\Lambda}\ E_{\lambda}\right)$
we set
\[
f\left(\xi\right):=\lim_{\lambda\uparrow\Lambda}\ f_{\lambda}\left(\psi(\lambda)\right),
\]
where $\psi(\lambda)$ is a net of numbers such that 
\[
\psi(\lambda)\in E_{\lambda}\ \ \text{and}\ \ \lim_{\lambda\uparrow\Lambda}\psi(\lambda)=\xi.
\]
A function which is a $\Lambda$-\textit{limit\ is called }\textbf{\textit{internal}}\textit{.}
In particular if, $\forall\lambda\in\mathfrak{L,}$ 
\[
f_{\lambda}=f,\ \ \ \ f:\ E\rightarrow\mathbb{R},
\]
we set 
\[
f^{\ast}=\lim_{\lambda\uparrow\Lambda}\ f_{\lambda}.
\]
$f^{\ast}:E^{\ast}\rightarrow\mathbb{R}^{\ast}$ is called the \textbf{natural
extension }of $f.$ If we identify $f$ with its graph, then $f^{\ast}$
is the graph of its natural extension.
\end{defn}

\subsection{Hyperfinite sets and hyperfinite sums\label{HE}}
\begin{defn}
An internal set is called \textbf{hyperfinite} if it is the $\Lambda$-limit
of a net $\varphi:\mathfrak{L}\rightarrow\mathfrak{F}$ where $\mathfrak{F}$
is a family of finite sets. 
\end{defn}
For example, if $E\in V_{\infty}(\mathbb{R})$, the set
\[
\widetilde{E}=\lim_{\lambda\uparrow\Lambda}\left(\lambda\cap E\right)
\]
is hyperfinite. Notice that
\[
E^{\sigma}\subset\widetilde{E}\subset E^{*}
\]
so, we can say that every set is contained in a hyperfinite set. 

\medskip{}

It is possible to add the elements of an hyperfinite set of numbers
(or vectors) as follows: let
\[
A:=\ \lim_{\lambda\uparrow\Lambda}A_{\lambda},
\]
be an hyperfinite set of numbers (or vectors); then the hyperfinite
sum of the elements of $A$ is defined in the following way: 
\[
\sum_{a\in A}a=\ \lim_{\lambda\uparrow\Lambda}\sum_{a\in A_{\lambda}}a.
\]
In particular, if $A_{\lambda}=\left\{ a_{1}(\lambda),...,a_{\beta(\lambda)}(\lambda)\right\} \ $with\ $\beta(\lambda)\in\mathbb{N},\ $then
setting 
\[
\beta=\ \lim_{\lambda\uparrow\Lambda}\ \beta(\lambda)\in\mathbb{N}^{\ast},
\]
we use the notation
\[
\sum_{j=1}^{\beta}a_{j}=\ \lim_{\lambda\uparrow\Lambda}\sum_{j=1}^{\beta(\lambda)}a_{j}(\lambda).
\]

\section{Ultrafunctions\label{sec:Ultrafunctions}}

\subsection{Caccioppoli spaces of ultrafunctions\label{subsec:Cacc}}

Let $\Omega$ be an open bounded set in $\mathbb{R}^{N}$, and let
$W(\Omega)$ be a (real or complex) vector space such that $\mathcal{D}(\overline{\Omega})\subseteq W(\Omega)\subseteq L^{1}(\Omega).$
\begin{defn}
\label{def:ultraf-1} A space of ultrafunctions modeled over the space
$W(\Omega)$ is given by
\[
W_{\Lambda}(\Omega):=\lim_{\lambda\uparrow\Lambda}W_{\lambda}(\Omega)=\left\{ \lim_{\lambda\uparrow\Lambda}f_{\lambda}\,\,|\,\,f_{\lambda}\in W_{\lambda}(\Omega)\right\} ,
\]
where $W_{\lambda}(\Omega)\subset W(\Omega)$ is an increasing net
of finite dimensional spaces such that
\[
W_{\lambda}(\Omega)\supseteq Span(W(\Omega)\cap\lambda).
\]
\end{defn}
\medskip{}

So, given any vector space of functions $W(\Omega)$, the space of
ultrafunction generated by $\left\{ W_{\lambda}(\Omega)\right\} $
is a vector space of hyperfinite dimension that includes $W(\Omega)^{\sigma}$,
as well as other functions in $W(\Omega)^{*}$. Hence the ultrafunctions
are particular internal functions
\[
u:\ \overline{\Omega}^{*}\rightarrow\mathbb{R}^{\ast}.
\]

\begin{defn}
Given a space of ultrafunctions $W_{\Lambda}(\Omega)$, a $\sigma$-basis
is an internal set of ultrafunctions $\left\{ \sigma_{a}(x)\right\} _{a\in\Gamma}$
such that, $\Omega\subset\Gamma\subset\Omega^{*}$ and $\forall u\in W_{\Lambda}(\Omega)$,
we can write
\[
u(x)=\sum_{a\in\Gamma}u(a)\sigma_{a}(x).
\]
It is possible to prove (see e.g. \cite{ultra}) that every space
of ultrafunctions has a $\sigma$-basis. Clearly, if $a,b\in\Gamma$
then $\sigma_{a}(b)=\delta_{ab}$ where $\delta_{ab}$ denotes the
Kronecker delta. 
\end{defn}
Now we will introduce a class of spaces of ultrafunctions suitable
for most applications. To do this, we need to recall the notion of
Caccioppoli set:
\begin{defn}
A Caccioppoli set $E$ is a Borel set such that $\chi_{E}\in BV,$
namely such that $\nabla(\chi_{E})$ (the distributional gradient
of the characteristic function of $E$) is a finite Radon measure
concentrated on $\partial E$. 
\end{defn}
The number
\[
p(E):=\left\langle |\nabla(\chi_{E})|,\,1\right\rangle 
\]
is called Caccioppoli perimeter of $E.$ From now on, with some abuse
of notation, the above expression will be written as follows:
\[
\int|\nabla(\chi_{E})|dx;
\]
this expression makes sense since ``$|\nabla(\chi_{E})|dx$'' is
a measure.

If $E\subset\overline{\Omega}$ is a measurable set, we define the
\emph{density function} of $E$ as follows:
\begin{equation}
\theta_{E}(x)=st\left(\frac{m(B_{\eta}(x)\cap E^{*})}{m(B_{\eta}(x)\cap\left(\overline{\Omega}\right)^{*})}\right),\label{eq:estate}
\end{equation}
where $\eta$ is a fixed infinitesimal and $m$ is the Lebesgue measure.

Clearly $\theta_{E}(x)$ is a function whose value is 1 in $int(E)$
and 0 in $\mathbb{R}^{N}\setminus\overline{E}$; moreover, it is easy
to prove that $\theta_{E}(x)$ is a measurable function and we have
that
\[
\int\theta_{E}(x)dx=m(E);
\]
also, if $E$ is a bounded Caccioppoli set,
\[
\int|\nabla\theta_{E}|dx=p(E).
\]

\begin{defn}
\emph{\label{eq:grisa}A set $E$ is called special Caccioppoli set
if it is open, bounded and $m\left(\partial E\right)=0.$ The family
of special Caccioppoli sets will be denoted by $\mathfrak{C}(\mathrm{\Omega})$.}
\end{defn}
Now we can define a space $V(\Omega)$ suitable for our aims: 
\begin{defn}
\label{def:AB}\emph{A function $f\in V(\Omega)$ if and only if
\[
f(x)=\sum_{k=1}^{n}f_{k}(x)\theta_{E_{k}}(x)
\]
where $f_{k}\in\mathcal{\mathscr{C}}(\mathbb{R}^{N})$, $E_{k}\in\mathfrak{C(\textrm{\ensuremath{\Omega}})}$,
and $n$ is a number which depends on $f$. Such a function, will
be called }\textbf{\emph{Caccioppoli function}}\emph{.}
\end{defn}
Notice that $V(\Omega)$ is a module over the ring $\mathscr{C}(\overline{\Omega})$
and that, $\forall f\in V(\Omega)$,
\begin{equation}
\left(\int|f(x)|\,dx=0\right)\Rightarrow\left(\forall x\in\mathbb{R}^{N},\ f(x)=0\right).\label{eq:ola}
\end{equation}
Hence, in particular, 
\[
\left(\int|f(x)|^{2}dx=0\right)^{1/2},
\]
is a norm (and not a seminorm).
\begin{defn}
\label{def:scorre} $V_{\Lambda}(\Omega)$ is called \emph{Caccioppoli}
\emph{space of ultrafunctions} if it satisfies the following properties:

\begin{enumerate}[label=(\roman*)]
\item $V_{\Lambda}(\Omega)$ is modeled on the space $V(\Omega)$ ;
\item $V_{\Lambda}(\Omega)$ has a $\sigma$-basis $\left\{ \sigma_{a}(x)\right\} _{a\in\Gamma}$,
$\Gamma\subset\left(\mathbb{R}^{N}\right)^{*}$, such that $\forall a\in\Gamma$
the support of $\sigma_{a}$ is contained in $\mathfrak{mon}(a)$.
\end{enumerate}
\end{defn}
Notice 

The existence of a Caccioppoli space of ultrafunctions will be proved
in Section \ref{sec:FSU}.
\begin{rem}
Usually in the study of PDE's, the function space where to work depends
on the problem or equation which we want to study. The same fact is
true in the world of ultrafunctions. However, the Caccioppoli space
$V_{\Lambda}(\Omega)$ have a special position since it satisfies
the properties required by a large class of problems. First of all
$V_{\Lambda}(\Omega)\subset\left(L^{1}(\Omega)\right)^{*}$. This
fact allows to define the pointwise integral (see next sub-section)
for all the ultrafunctions. This integral turns out to be a very good
tool. However, the space $L^{1}$ is not a good space for modeling
ultrafunctions, since they are defined pointwise while the functions
in $L^{1}$ are defined a.e. Thus, we are lead to the space $L^{1}(\Omega)\cap\mathcal{C}(\overline{\Omega})$,
but this space does not contain functions such as $f(x)\theta_{E}(x)$
which are important in many situations; for example the Gauss' divergence
theorem can be formulated as follows
\[
\int\nabla\cdot F(x)\theta_{E}(x)dx=\int_{\partial E}\mathbf{n}\cdot F(x)dS\,\,,
\]
whenever the vector field $F$ and $E$ are sufficiently smooth. Thus
the space $V_{\Lambda}(\Omega)$ seems to be the right space for a
large class of problems.
\end{rem}

\subsection{The pointwise integral}

From now on we will denote by $V_{\Lambda}(\Omega)$ a fixed Caccioppoli
space of ultrafunctions and by $\left\{ \sigma_{a}(x)\right\} _{a\in\Gamma}$
a fixed $\sigma$-basis as in Definition \ref{def:scorre}. If $u\in V_{\Lambda}(\Omega)$,
we have that
\begin{equation}
\int^{*}u(x)dx=\sum_{a\in\Gamma}u(a)\eta_{a}\,\,,\label{eq:fighissima}
\end{equation}
where
\[
\eta_{a}:=\int^{*}\sigma_{a}(x)\,dx.
\]

The equality (\ref{eq:fighissima}) suggests the following definition:
\begin{defn}
\label{def:sqint-1}For any internal function $g\,:\,\Omega{}^{*}\rightarrow\mathfrak{\mathbb{R}}^{*}$,
we set 
\[
\sqint g(x)dx:=\sum_{q\in\Gamma}g(q)\eta_{q\,}.
\]
In the sequel we will refer to $\sqint$ as to the \emph{pointwise
integral}. 
\end{defn}
From Definition \ref{def:sqint-1}, we have that 
\begin{equation}
\forall u\in V_{\Lambda}(\Omega),\ \int^{*}u(x)dx=\sqint u(x)dx,\label{eq:lilla-2}
\end{equation}
and in particular,

\begin{equation}
\forall f\in V(\Omega),\ \int f(x)dx=\sqint f^{*}(x)dx.\label{eq:lilla-3}
\end{equation}
But in general these equalities are not true for $L^{1}$ functions.
For example if
\[
f(x)=\begin{cases}
1 & if\ x=x_{0}\in\Omega,\\
0 & if\ x\neq x_{0\,,}
\end{cases}
\]
we have that $\int^{*}f^{*}(x)dx=\int f(x)dx=0$, while $\sqint f^{*}(x)dx=\eta_{x_{0}}>0$.
However, for any set $E\in\mathfrak{C(\mathrm{\Omega})}$ and any
function $f\in\mathcal{\mathscr{C}}(\overline{\Omega})$
\[
\sqint f^{*}(x)\theta_{E}(x)\,dx=\int_{E}f(x)\,dx,
\]
in fact
\[
\sqint f^{*}(x)\theta_{E}(x)\,dx=\int^{*}f^{*}(x)\theta_{E}(x)\,dx=\int f(x)\theta_{E}(x)\,dx=\int_{E}f(x)\,dx.
\]
Then, if $f(x)\geq0$ and $E$ is a bounded open set, we have that
\[
\sqint f^{*}(x)\chi_{E}dx<\sqint f^{*}(x)\theta_{E}(x)\,dx<\sqint f^{*}(x)\chi_{\overline{E}}dx.
\]
since
\[
\chi_{E}<\theta_{E}<\chi_{\overline{E}}.
\]

As we will see in the following part of this paper, in many cases,
it is more convenient to work with the pointwise integral $\sqint$
rather than with the natural extension of the Lebesgue integral $\int^{*}$. 
\begin{example}
If $\partial E$ is smooth, we have that $\forall x\in\partial E,\,\theta_{E}(x)=\frac{1}{2}$
and hence, if $E$ is open,
\begin{align*}
\sqint f^{*}(x)\chi_{E}(x)\,dx & =\sqint f^{*}(x)\theta_{E}(x)\,dx-\frac{1}{2}\sqint f^{*}(x)\chi_{\partial E}(x)\,dx\\
 & =\int_{E}f(x)\,dx-\frac{1}{2}\sqint f^{*}(x)\chi_{\partial E}(x)\,dx,
\end{align*}
and similarly
\[
\sqint f^{*}(x)\chi_{\overline{E}}(x)\,dx=\int_{E}f(x)\,dx+\frac{1}{2}\sqint f^{*}(x)\chi_{\partial E}(x)\,dx;
\]
of course, the term $\frac{1}{2}\sqint f^{*}(x)\chi_{\partial E}(x)\,dx$
is an infinitesimal number and it is relevant only in some particular
problems.
\end{example}
\medskip{}

The pointwise integral allows us to define the following scalar product:
\begin{equation}
\sqint u(x)v(x)dx=\sum_{q\in\Gamma}u(q)v(q)\eta_{q}.\label{eq:rina}
\end{equation}

From now on, the norm of an ultrafunction will be given by 
\[
\left\Vert u\right\Vert =\left(\sqint|u(x)|^{2}\ dx\right)^{\frac{1}{2}}.
\]
Notice that 
\[
\sqint u(x)v(x)dx=\int^{*}u(x)v(x)dx\Leftrightarrow uv\in V_{\Lambda}(\Omega).
\]

\begin{thm}
\label{thm:sette}If $\left\{ \sigma_{a}(x)\right\} _{a\in\Gamma}$
is a $\sigma$\textup{-}\textup{\emph{basis}}, then
\[
\left\{ \frac{\sigma_{a}(x)}{\sqrt{\eta_{a}}}\right\} _{a\in\Gamma}
\]
is a orthonormal basis with respect to the scalar product (\ref{eq:rina}).
Hence for every $u\in V_{\Lambda}(\Omega),$
\begin{equation}
u(x)=\sum_{q\in\Gamma}\frac{1}{\eta_{q}}\left(\sqint u(\xi)\sigma_{q}(\xi)d\xi\right)\sigma_{q}(x).\label{eq:lella}
\end{equation}
Moreover, we have that\textup{\emph{ }}\textup{
\begin{equation}
\forall a\in\Gamma,\ \left\Vert \sigma_{a}\right\Vert ^{2}=\eta_{a}.\label{eq:lilla}
\end{equation}
}
\end{thm}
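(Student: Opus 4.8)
The plan is to reduce all three assertions to a single pairing computation, namely the evaluation of $\sqint \sigma_a(x)\sigma_b(x)\,dx$ through the defining formula (\ref{eq:rina}) of the scalar product together with the Kronecker property $\sigma_a(b)=\delta_{ab}$ that every $\sigma$-basis enjoys. Once this is secured, orthonormality, the norm identity (\ref{eq:lilla}), and the expansion (\ref{eq:lella}) all drop out by elementary substitution, with no analytic input beyond the algebra of the hyperfinite sums defining $\sqint$.

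First I would compute, for $a,b\in\Gamma$,
\[
\sqint \sigma_a(x)\sigma_b(x)\,dx=\sum_{q\in\Gamma}\sigma_a(q)\sigma_b(q)\,\eta_q=\sum_{q\in\Gamma}\delta_{aq}\delta_{bq}\,\eta_q=\delta_{ab}\,\eta_a,
\]
where the first step is (\ref{eq:rina}), the second uses $\sigma_a(q)=\delta_{aq}$, and the last collapses the hyperfinite sum, since a nonzero term forces $q=a$ and $q=b$ simultaneously. Setting $a=b$ gives at once the norm identity (\ref{eq:lilla}), $\left\Vert \sigma_a\right\Vert ^{2}=\eta_a$; dividing the displayed identity by $\sqrt{\eta_a\eta_b}$ yields
\[
\sqint \frac{\sigma_a(x)}{\sqrt{\eta_a}}\,\frac{\sigma_b(x)}{\sqrt{\eta_b}}\,dx=\frac{\delta_{ab}\,\eta_a}{\sqrt{\eta_a\eta_b}}=\delta_{ab},
\]
so the rescaled family is orthonormal. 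That it is still a basis is inherited from the $\sigma$-basis: each $\sigma_a/\sqrt{\eta_a}$ is a nonzero scalar multiple of $\sigma_a$, and since $\{\sigma_a\}_{a\in\Gamma}$ already represents every $u\in V_{\Lambda}(\Omega)$, the rescaled family spans as well.

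For the expansion (\ref{eq:lella}) I would compute the coefficient attached to $\sigma_q$ by the same mechanism. Using (\ref{eq:rina}) with $v=\sigma_q$ and $\sigma_q(p)=\delta_{qp}$,
\[
\sqint u(\xi)\sigma_q(\xi)\,d\xi=\sum_{p\in\Gamma}u(p)\sigma_q(p)\,\eta_p=u(q)\,\eta_q,
\]
so that $\frac{1}{\eta_q}\sqint u(\xi)\sigma_q(\xi)\,d\xi=u(q)$. Substituting this into the right-hand side of (\ref{eq:lella}) turns it into $\sum_{q\in\Gamma}u(q)\sigma_q(x)$, which is exactly the $\sigma$-basis representation of $u$; hence the two sides coincide. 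This is just the observation that, in an orthonormal basis, the abstract Fourier coefficients reproduce the concrete coefficients $u(q)$ of the $\sigma$-expansion.

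The computations are short, so the only genuine point to secure is that $\eta_a>0$ for every $a\in\Gamma$, so that $\sqrt{\eta_a}$ is a well-defined positive hyperreal and the divisions above are legitimate; this is the step I would treat with care rather than as routine. I would deduce it from the norm identity (\ref{eq:lilla}) itself, reading $\eta_a=\left\Vert \sigma_a\right\Vert ^{2}$: since $\sigma_a(a)=1$ the ultrafunction $\sigma_a$ is not the zero element, so, $\left\Vert \cdot\right\Vert$ being a genuine norm (as is assumed of the pointwise scalar product, cf. the property (\ref{eq:ola}) guaranteeing that vanishing of the integral forces vanishing of the function), its square is strictly positive. Note that this ordering is non-circular, because the identity $\left\Vert \sigma_a\right\Vert ^{2}=\eta_a$ was obtained purely from the Kronecker property and did not presuppose positivity. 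Finally, every sum appearing above is a hyperfinite sum, so each manipulation is the $\Lambda$-limit of the corresponding finite-dimensional identity and no convergence questions arise.
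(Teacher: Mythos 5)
Your core argument is exactly the paper's proof: the paper computes
$\sqint\sigma_{a}(x)\sigma_{b}(x)\,dx=\sum_{q\in\Gamma}\sigma_{a}(q)\sigma_{b}(q)\eta_{q}=\delta_{ab}\eta_{a}$
and deduces everything from this single identity (``then the result'', with $b=a$ giving (\ref{eq:lilla})); your derivations of orthonormality and of the expansion (\ref{eq:lella}) are the routine unpackings that the paper leaves implicit. So on the three stated assertions your proposal is correct and takes the same route.

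The one place where you go beyond the paper---the positivity of $\eta_{a}$---is the one place where your argument fails, and it is worth seeing exactly why. Since the pointwise product is diagonal in the values on $\Gamma$, namely $\sqint uv\,dx=\sum_{q\in\Gamma}u(q)v(q)\eta_{q}$, the assertion that $\left\Vert \cdot\right\Vert $ is a genuine norm on $V_{\Lambda}(\Omega)$ is \emph{equivalent} to the positivity of every $\eta_{q}$: one direction is your own deduction ($\eta_{q}=\left\Vert \sigma_{q}\right\Vert ^{2}>0$), the other is immediate from the diagonal form. So assuming the norm property in order to prove $\eta_{a}>0$ is circular; your remark that the identity $\left\Vert \sigma_{a}\right\Vert ^{2}=\eta_{a}$ ``did not presuppose positivity'' defends the wrong half of the argument, because the circularity sits in the premise that $\left\Vert \cdot\right\Vert $ is a norm, not in the identity. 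Citing (\ref{eq:ola}) does not break the circle either: that property, transferred, makes $u\mapsto\int^{*}|u|^{2}\,dx$ positive definite on $V(\Omega)^{*}\supseteq V_{\Lambda}(\Omega)$, but this is not the pointwise integral, since $\sqint|u|^{2}\,dx=\int^{*}|u|^{2}\,dx$ only when $u^{2}\in V_{\Lambda}(\Omega)$ (the paper states this equivalence explicitly), and a generic ultrafunction does not have its square in $V_{\Lambda}(\Omega)$. The positivity of the $\eta_{a}$ must instead come out of the construction of the $\sigma$-basis in Section \ref{sec:FSU} (there the functions $\zeta_{a}$, $a\in\Xi$, are nonnegative with $\zeta_{a}(a)=1$, so those $\eta_{a}$ are positive, but the vectors completing the basis carry no sign information), or else be taken as a standing assumption. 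The paper assumes it silently---it writes $\sqrt{\eta_{a}}$ in the statement and calls (\ref{eq:rina}) a scalar product without proof---so your proof is no less rigorous than the paper's; but the circular argument should not be presented as settling the point that you yourself singled out as the delicate one.
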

\begin{proof}
By (\ref{eq:rina}), we have that
\[
\sqint\sigma_{a}(x)\sigma_{b}(x)dx=\sum_{q\in\Gamma}\sigma_{a}(q)\sigma_{b}(q)\eta_{q}=\sum_{q\in\Gamma}\delta_{aq}\delta_{bq}\eta_{q}=\delta_{ab}\eta_{a},
\]
then the result. By the above equality, taking $b=a$ we get (\ref{eq:lilla}). 
\end{proof}

\subsection{The $\delta$-bases}

Next, we will define the \textit{delta ultrafunctions}:
\begin{defn}
\label{dede}Given a point $q\in\Omega^{\ast},$ we denote by $\delta_{q}(x)$
an ultrafunction in $V_{\Lambda}(\Omega)$ such that 
\begin{equation}
\forall v\in V_{\Lambda}(\Omega),\ \sqint v(x)\delta_{q}(x)\ dx=v(q),\label{deltafunction}
\end{equation}
and $\delta_{q}(x)$ is called \emph{delta (or the Dirac) ultrafunction}
concentrated in $q$. 
\end{defn}
Let us see the main properties of the delta ultrafunctions:
\begin{thm}
\label{delta} The delta ultrafunction satisfies the following properties:

\begin{enumerate}
\item For every $q\in\overline{\Omega}^{\ast}$ there exists an unique delta
ultrafunction concentrated in $q;$
\item for every $a,\ b\in\overline{\Omega}^{*},\ \delta_{a}(b)=\delta_{b}(a);$
\item $\left\Vert \delta_{q}\right\Vert ^{2}=\delta_{q}(q).$ 
\end{enumerate}
\end{thm}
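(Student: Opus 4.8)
The plan is to read the defining identity (\ref{deltafunction}) as the assertion that $\delta_q$ is the Riesz representative, with respect to the pointwise scalar product (\ref{eq:rina}), of the evaluation functional $v\mapsto v(q)$ on $V_\Lambda(\Omega)$. Since $V_\Lambda(\Omega)$ is a hyperfinite-dimensional inner product space, such a representative exists and is unique; I would make this completely explicit through the $\sigma$-basis rather than appealing to Riesz abstractly, which also keeps the internality of $\delta_q$ transparent.

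For existence in part (1), I would exhibit the candidate
\[
\delta_q(x):=\sum_{a\in\Gamma}\frac{\sigma_a(q)}{\eta_a}\,\sigma_a(x),
\]
which is a hyperfinite linear combination of elements of the $\sigma$-basis, hence an internal element of $V_\Lambda(\Omega)$. To verify (\ref{deltafunction}), I would expand an arbitrary $v\in V_\Lambda(\Omega)$ as $v(x)=\sum_{a\in\Gamma}v(a)\sigma_a(x)$ and compute $\sqint v(x)\delta_q(x)\,dx$ using the orthogonality relation $\sqint\sigma_a\sigma_b\,dx=\delta_{ab}\eta_a$ established in the proof of Theorem \ref{thm:sette}; the double sum collapses to $\sum_{a\in\Gamma}v(a)\sigma_a(q)$, which is exactly the $\sigma$-basis expansion of $v$ evaluated at the point $q$, i.e. $v(q)$. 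For uniqueness, if two ultrafunctions satisfied (\ref{deltafunction}) their difference $w$ would obey $\sqint v\,w\,dx=0$ for every $v$; choosing $v=w$ gives $\Vert w\Vert^2=0$, and since (\ref{eq:rina}) is a genuine inner product (the normalized $\sigma$-basis of Theorem \ref{thm:sette} is orthonormal, so the form is positive definite and $\Vert\cdot\Vert$ is a norm), we conclude $w=0$.

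Properties (2) and (3) then follow formally from the symmetry of the scalar product. For (2), I would apply (\ref{deltafunction}) twice: taking $v=\delta_b$ gives $\sqint\delta_b\,\delta_a\,dx=\delta_b(a)$, while taking $v=\delta_a$ gives $\sqint\delta_a\,\delta_b\,dx=\delta_a(b)$; symmetry of (\ref{eq:rina}) identifies the two left-hand sides, whence $\delta_a(b)=\delta_b(a)$. For (3), taking $v=\delta_q$ in (\ref{deltafunction}) yields $\Vert\delta_q\Vert^2=\sqint\delta_q\,\delta_q\,dx=\delta_q(q)$.

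The only genuine content is part (1); parts (2) and (3) are immediate once the defining relation is in hand. The single point deserving care is the positive-definiteness of the pointwise scalar product — equivalently, that $\eta_a>0$ for each $a\in\Gamma$ — since this is precisely what upgrades $\Vert\cdot\Vert$ from a seminorm to a norm and thereby secures uniqueness; this is already built into the $\sigma$-basis framework through (\ref{eq:lilla}) and the orthonormality in Theorem \ref{thm:sette}. I do not expect any serious obstacle beyond keeping the bookkeeping of the hyperfinite sums correct.
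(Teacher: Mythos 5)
Your proof is correct, and its overall strategy --- realizing $\delta_q$ as the Riesz representative of the evaluation functional in a hyperfinite-dimensional inner product space, made explicit through a basis expansion --- is the same as the paper's; but the two implementations differ in instructive ways. For existence, the paper takes an abstract orthonormal basis $\left\{ e_{j}\right\} _{j=1}^{\beta}$ of $V_{\Lambda}(\Omega)$ and sets $\delta_{q}(x)=\sum_{j}e_{j}(q)e_{j}(x)$, whereas you work directly with the $\sigma$-basis and write $\delta_{q}=\sum_{a\in\Gamma}\eta_{a}^{-1}\sigma_{a}(q)\,\sigma_{a}$; your formula is more explicit (it immediately yields the identity $\sigma_{a}=\eta_{a}\delta_{a}$ of the Proposition following Theorem \ref{delta}), at the cost of dividing by $\eta_{a}$, so it needs $\eta_{a}\neq0$ stated up front. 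For uniqueness the difference is more substantive: you take the difference $w$ of two representatives, test against $v=w$, and invoke positive-definiteness of (\ref{eq:rina}) to conclude $w=0$ from $\left\Vert w\right\Vert =0$; the paper instead tests $w$ against the already-constructed deltas, getting
\[
w(y)=\sqint w\,\delta_{y}\,dx=\delta_{y}(q)-\delta_{y}(q)=0
\]
pointwise for every $y\in\overline{\Omega}^{*}$. The paper's trick needs no positivity at all --- only the existence part it has just proved, i.e.\ that the evaluation functionals are represented, which is a non-degeneracy statement --- so it sidesteps exactly the point you flag as the delicate one. To be fair, both arguments ultimately lean on the same unstated hypothesis ($\eta_{a}>0$ for all $a\in\Gamma$, equivalently positive-definiteness of the pointwise product on $V_{\Lambda}(\Omega)$): the paper needs it for its orthonormal basis to exist in the first place, you need it for the norm argument and for the $\sqrt{\eta_{a}}$ in Theorem \ref{thm:sette} to make sense; and note that (\ref{eq:lilla}) by itself is the tautology $\sum_{q}\delta_{aq}^{2}\eta_{q}=\eta_{a}$ and does not supply positivity. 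Parts (2) and (3) are handled identically in both proofs.
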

\begin{proof}
1. Let $\left\{ e_{j}\right\} _{j=1}^{\beta}$ be an orthonormal real
basis of $V_{\Lambda}(\Omega),$ and set 
\begin{equation}
\delta_{q}(x)=\sum_{j=1}^{\beta}e_{j}(q)e_{j}(x).\label{eq:deltaserie}
\end{equation}
Let us prove that $\delta_{q}(x)$ actually satisfies (\ref{deltafunction}).
Let $v(x)=\sum_{j=1}^{\beta}v_{j}e_{j}(x)$ be any ultrafunction.
Then
\begin{eqnarray*}
\sqint v(x)\delta_{q}(x)dx & = & \sqint\left(\sum_{j=1}^{\beta}v_{j}e_{j}(x)\right)\left(\sum_{k=1}^{\beta}e_{k}(q)e_{k}(x)\right)dx=\\
 & = & \sum_{j=1}^{\beta}\sum_{k=1}^{\beta}v_{j}e_{k}(q)\sqint e_{j}(x)e_{k}(x)dx=\\
 & = & \sum_{j=1}^{\beta}\sum_{k=1}^{\beta}v_{j}e_{k}(q)\delta_{jk}=\sum_{j=1}^{\beta}v_{k}e_{k}(q)=v(q).
\end{eqnarray*}
So $\delta_{q}(x)$ is a delta ultrafunction concentrated in $q$.
It is unique: infact, if $\gamma_{q}(x)$ is another delta ultrafunction
concentrated in $q$, then for every $y\in\overline{\Omega}^{*}$
we have:
\[
\delta_{q}(y)-\gamma_{q}(y)=\sqint(\delta_{q}(x)-\gamma_{q}(x))\delta_{y}(x)dx=\delta_{y}(q)-\delta_{y}(q)=0,
\]
and hence $\delta_{q}(y)=\gamma_{q}(y)$ for every $y\in\overline{\Omega}^{\ast}.$

2.$\ \delta_{a}\left(b\right)=\sqint\delta_{a}(x)\delta_{b}(x)\ dx=\delta_{b}\left(a\right).$

3. $\left\Vert \delta_{q}\right\Vert ^{2}=\sqint\delta_{q}(x)\delta_{q}(x)\,dx=\delta_{q}(q)$.\\
\end{proof}
By the definition of $\Gamma$, $\forall a,b\in\Gamma$, we have that
\begin{equation}
\sqint\delta_{a}(x)\sigma_{b}(x)dx=\sigma_{a}(b)=\delta_{ab}.\label{mimma}
\end{equation}
From this it follows readily the following result 
\begin{prop}
The set $\left\{ \delta_{a}(x)\right\} _{a\in\Gamma}$ $(\Gamma\subset\Omega^{\ast})$
is the dual basis of the sigma-basis; it will be called the $\delta$-basis
of $V_{\Lambda}(\Omega)$. 
\end{prop}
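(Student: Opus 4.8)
The dual basis of the $\sigma$-basis $\{\sigma_b\}_{b\in\Gamma}$, taken with respect to the scalar product (\ref{eq:rina}), is by definition the family $\{\tau_a\}_{a\in\Gamma}\subset V_{\Lambda}(\Omega)$ characterized by the biorthogonality relations $\sqint\tau_a\sigma_b\,dx=\delta_{ab}$. The plan is therefore very short: first I would observe that the delta ultrafunctions already satisfy these relations, which is exactly the content of (\ref{mimma}), namely $\sqint\delta_a\sigma_b\,dx=\sigma_a(b)=\delta_{ab}$; then I would check that $\{\delta_a\}_{a\in\Gamma}$ is genuinely a basis of $V_{\Lambda}(\Omega)$, so that calling it the ``$\delta$-basis'' is justified. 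The first part is immediate; the second is where the (small amount of) work lies.

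For the second point I would argue by a dimension count. Since $\{\sigma_a\}_{a\in\Gamma}$ is a $\sigma$-basis, the (hyperfinite) dimension of $V_{\Lambda}(\Omega)$ equals the internal cardinality $|\Gamma|$, and the family $\{\delta_a\}_{a\in\Gamma}$ has exactly the same cardinality. To see that it is linearly independent, suppose $\sum_{a\in\Gamma}c_a\delta_a=0$ is a vanishing (hyperfinite) linear combination; pairing with an arbitrary $\sigma_b$ through the pointwise integral and using (\ref{mimma}) gives
\[
0=\sqint\Big(\sum_{a\in\Gamma}c_a\delta_a\Big)\sigma_b\,dx=\sum_{a\in\Gamma}c_a\,\delta_{ab}=c_b
\]
for every $b\in\Gamma$, so all coefficients vanish. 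A linearly independent family whose cardinality equals the dimension is a basis; hence $\{\delta_a\}_{a\in\Gamma}$ is a basis and, by the biorthogonality above, the dual basis of $\{\sigma_b\}_{b\in\Gamma}$. As a consistency check I would expand an arbitrary $u\in V_{\Lambda}(\Omega)$ on this basis and pair with $\sigma_b$, recovering $u=\sum_{a\in\Gamma}\big(\sqint u\,\sigma_a\,dx\big)\,\delta_a$, which is the natural counterpart of the $\sigma$-expansion $u=\sum_{a\in\Gamma}u(a)\sigma_a$.

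The only genuinely delicate point is that everything takes place in hyperfinite dimension, so the elementary fact ``a linearly independent set of the right cardinality is a basis'' cannot be invoked naively: the combinations are internal hyperfinite sums with coefficients in $\mathbb{R}^{*}$. I would circumvent this by phrasing both the independence argument and the dimension equality internally---on each finite-dimensional model space $W_{\lambda}(\Omega)$ the statement is the classical one---and then passing to the $\Lambda$-limit via transfer. With the transfer principle in hand the argument above is rigorous, and the proof reduces, as the text anticipates, to the single relation (\ref{mimma}).
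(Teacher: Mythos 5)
Your proof is correct and takes essentially the same route as the paper: the paper states that the proposition ``follows readily'' from the biorthogonality relation (\ref{mimma}), which is exactly the pivot of your argument. Your additional dimension count, the pairing argument for linear independence, and the remark that the hyperfinite setting is handled by transfer merely make explicit the linear algebra the paper leaves implicit, so there is no substantive difference.
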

\medskip{}

Let us examine the main properties of the $\delta$-basis:
\begin{prop}
The $\delta$-basis, satisfies the following properties:

\begin{enumerate}[label=(\roman*)]
\item $u(x)=\sum_{q\in\Gamma}\left[\sqint\sigma_{q}(\xi)u(\xi)d\xi\right]\delta_{q}(x);$
\item \textup{\emph{$\forall a,b\in\Gamma,\ \sigma_{a}(x)=\eta_{a}\delta_{a}(x);$}}
\item \textup{$\forall a\in\Gamma,\ \left\Vert \delta_{a}\right\Vert ^{2}=\sqint\delta_{a}(x)^{2}\,dx=\delta_{a}(a)=\eta_{a}^{-1}.$}
\end{enumerate}
\end{prop}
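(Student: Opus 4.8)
The plan is to establish statement (ii) first, since it is the bridge between the $\sigma$-basis and the $\delta$-basis and the other two statements fall out of it immediately. The key observation is that the expansion (\ref{eq:lella}) already proved in Theorem \ref{thm:sette} writes every ultrafunction in terms of the $\sigma$-basis, weighted by its $\sigma$-scalar products; so to identify $\delta_a$ it suffices to evaluate those scalar products on the delta ultrafunctions themselves.

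For (ii) I would apply (\ref{eq:lella}) to the particular ultrafunction $u=\delta_a$. The coefficients occurring there are $\sqint\delta_a(\xi)\sigma_q(\xi)\,d\xi$, and by the dual-basis relation (\ref{mimma}) (equivalently, by the defining property (\ref{deltafunction}) of the delta ultrafunction with $v=\sigma_q$) each such coefficient equals $\sigma_a(q)=\delta_{aq}$. Hence the sum in (\ref{eq:lella}) collapses to the single term $q=a$, giving $\delta_a(x)=\eta_a^{-1}\sigma_a(x)$, which is exactly (ii). A more conceptual route to the same identity uses orthonormality: since $\{\sigma_a/\sqrt{\eta_a}\}$ is orthonormal by Theorem \ref{thm:sette}, comparing the orthonormal expansion $v=\sum_{a}\eta_a^{-1}(\sqint v\sigma_a)\sigma_a$ with the $\sigma$-basis expansion $v=\sum_{a}v(a)\sigma_a$ forces $v(a)=\eta_a^{-1}\sqint v\sigma_a$ for every $v$; matching this against $v(a)=\sqint v\delta_a$ and using the non-degeneracy of the scalar product (\ref{eq:rina}) (which is positive definite because $\eta_a=\|\sigma_a\|^2>0$) again yields $\delta_a=\eta_a^{-1}\sigma_a$.

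Statement (iii) then follows by combining (ii) with part 3 of Theorem \ref{delta}: we already know $\|\delta_a\|^2=\sqint\delta_a(x)^2\,dx=\delta_a(a)$, and substituting (ii) gives $\delta_a(a)=\eta_a^{-1}\sigma_a(a)=\eta_a^{-1}$, since $\sigma_a(a)=\delta_{aa}=1$. For (i) I would first compute the bracketed coefficient from the pointwise-integral formula of Definition \ref{def:sqint-1}, namely $\sqint\sigma_q(\xi)u(\xi)\,d\xi=\sum_{a\in\Gamma}\sigma_q(a)u(a)\eta_a=u(q)\eta_q$, again using $\sigma_q(a)=\delta_{qa}$. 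The right-hand side of (i) then becomes $\sum_{q\in\Gamma}u(q)\eta_q\delta_q(x)$, and applying (ii) to replace $\eta_q\delta_q$ by $\sigma_q$ turns this into the $\sigma$-basis expansion $\sum_{q\in\Gamma}u(q)\sigma_q(x)=u(x)$. None of these steps is a genuine obstacle; the only point demanding care is bookkeeping the (symmetric) Kronecker relations and invoking (\ref{eq:lella}) and (\ref{mimma}) in the correct direction, so that (ii) is obtained cleanly and the remaining two identities reduce to one-line substitutions.
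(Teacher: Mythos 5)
Your proposal is correct and takes essentially the same route as the paper: for (ii) you apply (\ref{eq:lella}) to $u=\delta_{a}$ and collapse the sum via (\ref{mimma}), which is word-for-word the paper's argument, while (i) and (iii) --- which the paper dismisses as immediate consequences --- you fill in with short, correct substitutions using (ii), Definition \ref{def:sqint-1}, and Theorem \ref{delta}. There is no gap.
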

\begin{proof}
(i) is an immediate consequence of the definition of $\delta$-basis. 

(ii) By Theorem \ref{thm:sette}, it follows that:
\[
\delta_{a}(x)=\sum_{q\in\Gamma}\frac{1}{\eta_{q}}\left(\sqint\delta_{a}(\xi)\sigma_{q}(\xi)d\xi\right)\sigma_{q}(x)=\sum_{q\in\Gamma}\frac{1}{\eta_{q}}\delta_{aq}\sigma_{q}(x)=\frac{1}{\eta_{a}}\sigma_{a}(x).
\]

(iii) Is an immediate consequence of (ii).
\end{proof}

\subsection{The canonical extension of functions}

We have seen that every function $f:\ \Omega\rightarrow\mathbb{R}$
has a natural extension $f^{*}:\ \Omega^{*}\rightarrow\mathbb{R}^{*}$.
However, in general, $f^{*}$ is not an ultrafunction; in fact, it
is not difficult to prove that the natural extension $f^{\ast}$ of
a function $f$, is an ultrafunction if and only if $f\in V(\Omega).$
So it is useful to define an ultrafunction $f\text{\textdegree}\in V_{\Lambda}(\Omega)$
which approximates $f^{*}$. More in general, for any internal function
$u:\ \Omega^{*}\rightarrow\mathbb{R}^{*}$, we will define an ultrafunction
$u\text{\textdegree}$ as follows:
\begin{defn}
If $u:\ \Omega^{*}\rightarrow\mathbb{R}^{*}$ is an internal function,
we define $u\text{\textdegree}\in V_{\Lambda}(\Omega)$ by the following
formula:
\[
u\text{\textdegree}(x)=\sum_{q\in\Gamma}u(q)\sigma_{q}(x);
\]
if $f:\ \Omega\rightarrow\mathbb{R}$, with some abuse of notation,
we set
\[
f\text{\textdegree}(x)=\left(f^{*}\right)\text{\textdegree}(x)=\sum_{q\in\Gamma}f^{*}(q)\sigma_{q}(x).
\]
\end{defn}
\medskip{}

Since $\Omega\subset\Gamma$, for any internal function $u$, we have
that 
\[
\forall x\in\Omega,\,\,u(x)=u\text{\textdegree}(x)
\]
and
\[
\forall x\in\Omega^{*},\,\,u(x)=u\text{\textdegree}(x)\,\,\Longleftrightarrow u\in V_{\Lambda}(\Omega).
\]

Notice that 
\begin{equation}
P\text{\textdegree}:\ \mathfrak{F}(\Omega)^{*}\rightarrow V_{\Lambda}(\Omega)\label{eq:pia}
\end{equation}
defined by $P\text{\textdegree}(u)=u\text{\textdegree}$ is noting
else but the \emph{orthogonal} projection of $u\in\mathfrak{F}(\Omega)^{*}$
with respect to the semidefinite bilinear form 
\[
\sqint u(x)h(x)dx.
\]

\begin{example}
\label{exa:exa}If \emph{$f\in\mathcal{\mathscr{C}}(\mathbb{R}^{N})$,
and $E\in\mathfrak{C(\textrm{\ensuremath{\Omega}})}$, then $f\theta_{E}\in V(\Omega)$
and hence
\[
\left(f\theta_{E}\right)^{\text{\textdegree}}=f^{*}\theta_{E}^{*}
\]
}
\end{example}
\begin{defn}
\label{def:lina}If a function $f$ is not defined on a set $S:=\Omega\setminus\Theta$,
by convention, we define
\[
f\text{\textdegree}(x)=\sum_{q\in\Gamma\cap\Theta^{*}}f^{*}(q)\sigma_{q}(x).
\]

\medskip{}
\end{defn}
\begin{example}
By the definition above, $\forall x\in\Gamma$, we have that 
\[
\left(\frac{1}{|x|}\right)^{\text{\textdegree}}=\begin{cases}
\frac{1}{|x|} & if\ x\neq0\\
0 & if\ x=0.
\end{cases}
\]
\end{example}
If $f\in\mathscr{C}(\Omega)$, then $f\text{\textdegree}\neq f^{*}$
unless $f\in V_{\Lambda}(\Omega)$. Let examine what $f\text{\textdegree}$
looks like. 
\begin{thm}
Let $f:\,\Omega\rightarrow\mathbb{R}$ be \textup{\emph{continuous}}
in \textup{\emph{a bounded open set}}\textup{ $A\subset\Omega$.}\textup{\emph{
Then, $\forall x\in A^{*},$ with $\mathfrak{mon}(x)\subset A^{*}$
we have that
\[
f\text{\textdegree}(x)=f^{*}(x).
\]
}}
\end{thm}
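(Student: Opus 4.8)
The plan is to upgrade the approximate equality $f\text{\textdegree}\sim f^{*}$, which one expects from continuity alone, into an \emph{exact} identity by replacing $f$ near $x$ with a genuine Caccioppoli function, whose natural extension \emph{is} an ultrafunction and therefore reproduces itself under the canonical projection. First I would pin down the location of $x$. Since $A$ (and hence $\Omega$) is bounded, $A^{*}\subset\mathfrak{B}_{R}(0)$ for some standard $R$, so every point of $A^{*}$ is finite; in particular $x$ is finite and $x_{0}:=st(x)$ exists. As $x_{0}\sim x$ we have $x_{0}\in\mathfrak{mon}(x)\subset A^{*}$, and since $x_{0}$ is standard this forces $x_{0}\in A$. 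Because $A$ is open I may then fix a standard radius $r>0$ with $\overline{B_{r}(x_{0})}\subset A$.

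Next I would manufacture the local Caccioppoli function. Using a continuous cutoff (or Tietze's theorem) I extend $f|_{\overline{B_{r}(x_{0})}}$ to some $\tilde{f}\in\mathcal{C}(\mathbb{R}^{N})$ and put $g:=\tilde{f}\,\theta_{B_{r}(x_{0})}$. The ball $B_{r}(x_{0})$ is open, bounded and satisfies $m(\partial B_{r}(x_{0}))=0$, hence is a special Caccioppoli set (Definition \ref{eq:grisa}), so $g\in V(\Omega)$ by Definition \ref{def:AB}. By Example \ref{exa:exa} its natural extension is the ultrafunction $g^{*}=g\text{\textdegree}=\tilde{f}^{*}\theta_{B_{r}(x_{0})}^{*}$, and consequently the reproducing identity $g^{*}(x)=\sum_{q\in\Gamma}g^{*}(q)\sigma_{q}(x)$ holds \emph{exactly}.

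The heart of the argument is the local matching on the monad. Any $y\in\mathfrak{mon}(x)$ satisfies $y\sim x\sim x_{0}$, so $|y-x_{0}|$ is infinitesimal and $y\in B_{r}(x_{0})^{*}$; there $\theta_{B_{r}(x_{0})}^{*}\equiv1$ and $\tilde{f}^{*}=f^{*}$, whence $g^{*}(y)=f^{*}(y)$ for \emph{every} $y\in\mathfrak{mon}(x)$, in particular $g^{*}(x)=f^{*}(x)$ and $g^{*}(q)=f^{*}(q)$ for all $q\in\Gamma\cap\mathfrak{mon}(x)$. Now I chain equalities starting from the definition $f\text{\textdegree}(x)=\sum_{q\in\Gamma}f^{*}(q)\sigma_{q}(x)$: the support condition $supp(\sigma_{q})\subset\mathfrak{mon}(q)$ from Definition \ref{def:scorre} annihilates every term with $q\notin\mathfrak{mon}(x)$, reducing the sum to $q\in\Gamma\cap\mathfrak{mon}(x)$; on that index set I substitute $f^{*}(q)=g^{*}(q)$, then re-extend the sum back to all of $\Gamma$ (the reinstated terms vanish for the same support reason), and recognize the result as $g^{*}(x)=f^{*}(x)$.

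I expect the main obstacle to be conceptual rather than computational: one must realize that \emph{exact} equality, and not merely infinite closeness, is available precisely because $g^{*}$ is a true ultrafunction on which the projection $P\text{\textdegree}$ acts as the identity. The two points demanding genuine care are the localization facts — that $st(x)\in A$, so that a \emph{standard} ball fits inside $A$, and that the \emph{whole} monad of $x$ lies in the interior $B_{r}(x_{0})^{*}$ where $\theta_{B_{r}(x_{0})}^{*}=1$ — since both rely on the full hypothesis $\mathfrak{mon}(x)\subset A^{*}$ rather than merely on $x\in A^{*}$.
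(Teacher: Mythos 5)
Your proof is correct and follows essentially the same route as the paper's: localize around the monad by a special Caccioppoli set ($E$ in the paper, your ball $B_{r}(st(x))$ with $\mathfrak{mon}(x)\subset E^{*}\subset A^{*}$), use that a continuous function times $\theta_{E}$ is a Caccioppoli function whose natural extension is an ultrafunction and hence is reproduced \emph{exactly} by the $\sigma$-basis, and annihilate all terms outside the monad via the support property of Definition \ref{def:scorre}, (ii). If anything, you are more explicit than the paper on two points it glosses over: the Tietze extension needed so that the localized function genuinely lies in $V(\Omega)$ (the paper applies Example \ref{exa:exa} although $f$ is only continuous on $A$), and the reduction of a general $x\in A^{*}$ with $\mathfrak{mon}(x)\subset A^{*}$ to its standard part $x_{0}\in A$.
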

\begin{proof}
Fix $x_{0}\in A$. Since $A$ is bounded, there exists a set $E\in\mathfrak{C}(\mathrm{\Omega})$
such that 
\[
\mathfrak{mon}(x_{0})\subset E^{*}\subset A^{*}.
\]
We have that (see Example \ref{exa:exa})
\begin{align*}
f\text{\textdegree}(x) & =\sum_{a\in\Gamma}f^{*}(a)\sigma_{a}(x)\\
 & =\sum_{a\in\Gamma}f^{*}(a)\theta_{E}^{*}(a)\sigma_{a}(x)+\sum_{a\in\Gamma}f^{*}(a)(1-\theta_{E}^{*}(a))\sigma_{a}(x)\\
 & =f^{*}(x)\theta_{E}^{*}(x)+\sum_{a\in\Gamma\setminus E^{*}}f^{*}(a)(1-\theta_{E}^{*}(a))\sigma_{a}(x).
\end{align*}
Since $x_{0}\in E^{*}$, $\theta_{E}^{*}(x_{0})=1$; moreover, since
$\mathfrak{mon}(x_{0})\subset E^{*}$, by definition \ref{def:scorre},
(ii), 
\[
\forall a\in\Gamma\setminus E^{*},\,\sigma_{a}(x_{0})=\sigma_{x_{0}}(a)=0.
\]
 Then
\[
f\text{\textdegree}(x_{0})=f^{*}(x_{0}).
\]
\end{proof}
\begin{cor}
If $f\in\mathscr{C}(\Omega)$, then, for any $x\in\Omega^{*}$, such
that $|x|$ is finite, we get 
\[
f\text{\textdegree}(x)=f^{*}(x).
\]
\end{cor}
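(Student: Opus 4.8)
The plan is to deduce the corollary from the preceding theorem by producing, for the given finite point $x$, a bounded open set $A\subset\Omega$ on which $f$ is continuous and which absorbs the entire monad of $x$. Since $|x|$ is finite, $x$ has a standard part $x_{0}=st(x)\in\mathbb{R}^{N}$, and because $x\in\Omega^{\ast}$ with $\Omega$ bounded we have $x_{0}\in\overline{\Omega}$. I would treat the essential case $x_{0}\in\Omega$, where the reduction is clean.

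First I would fix $x_{0}=st(x)$ and use the openness of $\Omega$ to pick a standard radius $r>0$ so small that $\overline{B_{r}(x_{0})}\subset\Omega$; then set $A=B_{r}(x_{0})$, a bounded open set with $A\subset\Omega$, whose natural extension is $A^{\ast}=\mathfrak{B}_{r}(x_{0})$. Since $f\in\mathscr{C}(\Omega)$, its restriction to $A$ is continuous, so the hypotheses of the theorem on the pair $(f,A)$ are satisfied.

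Next I would check the two conditions on the point itself. As $x-x_{0}$ is infinitesimal we have $\mathfrak{mon}(x)=\mathfrak{mon}(x_{0})$, since $y\sim x$ if and only if $y\sim x_{0}$. For any such $y$ the quantity $|y-x_{0}|$ is infinitesimal, hence $<r$, so $y\in\mathfrak{B}_{r}(x_{0})=A^{\ast}$; thus $\mathfrak{mon}(x)\subset A^{\ast}$ and, in particular, $x\in A^{\ast}$. The theorem now applies verbatim and gives $f\text{\textdegree}(x)=f^{\ast}(x)$, which is the assertion.

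The one delicate point, and the reason the result is genuinely local, is the boundary case $x_{0}\in\partial\Omega$: there $\mathfrak{mon}(x)=\mathfrak{mon}(x_{0})$ necessarily contains points lying outside $\Omega^{\ast}$, so no bounded open $A\subset\Omega$ can satisfy $\mathfrak{mon}(x)\subset A^{\ast}$ and the theorem cannot be invoked. Equivalently, in the inner estimate used in the theorem's proof one would need a special Caccioppoli set $E\in\mathfrak{C}(\Omega)$ with $\mathfrak{mon}(x_{0})\subset E^{\ast}$, which is impossible once $x_{0}\in\partial\Omega$. Hence the substantive content of the corollary is precisely for finite points whose standard part lies in the open set $\Omega$: finiteness of $|x|$ guarantees the existence of $x_{0}\in\overline{\Omega}$, while the openness of $\Omega$ at $x_{0}$ is what actually drives the conclusion.
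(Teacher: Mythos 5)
Your proof is correct and is precisely the argument the paper intends: the corollary appears with no proof of its own, as an immediate consequence of the preceding theorem, and your reduction --- taking $x_{0}=st(x)$, choosing a standard ball $A=B_{r}(x_{0})$ with $\overline{B_{r}(x_{0})}\subset\Omega$, and checking $\mathfrak{mon}(x)=\mathfrak{mon}(x_{0})\subset A^{*}$ so the theorem applies --- is exactly the intended derivation. Your boundary caveat is also well taken and reflects an imprecision in the paper rather than a gap in your argument: since the paper assumes $\Omega$ bounded, the hypothesis ``$|x|$ finite'' holds for every $x\in\Omega^{*}$, so the corollary read literally would force $f\text{\textdegree}=f^{*}$ on all of $\Omega^{*}$, contradicting the paper's own remark that $f\text{\textdegree}\neq f^{*}$ unless $f\in V_{\Lambda}(\Omega)$; the statement is only tenable under your reading (points with $st(x)\in\Omega$, equivalently $\mathfrak{mon}(x)\subset\Omega^{*}$), or in the case $\Omega=\mathbb{R}^{N}$ used in the paper's later applications, where your argument covers every finite point.
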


\subsection{Canonical splitting of an ultrafunction}

In many applications, it is useful to split an ultrafunction $u$
in a part $w\text{\textdegree}$ which is the canonical extension
of a standard function $w$ and a part $\psi$ which is not directly
related to any classical object.

If $u\in V_{\Lambda}(\Omega)$, we set
\[
\Xi=\left\{ x\in\Omega\,|\ u(x)\,\,is\,infinite\right\} 
\]
and
\[
\overline{w}(x)=\begin{cases}
st(u(x)) & if\ x\in\Omega\setminus\Xi\\
0 & if\ x\in\Xi.
\end{cases}
\]

\begin{defn}
For every ultrafunction $u$ consider the splitting
\[
u=w\text{\textdegree}+\psi
\]
where

\begin{itemize}
\item $w=\overline{w}_{|\Omega\setminus\Xi}$ and $w\text{\textdegree}$
which is defined by Definition \ref{def:lina}, is called the \textbf{functional
part} of $u$;
\item $\psi:=u-w\text{\textdegree}$ is called the \textbf{singular part}
of $u$.
\end{itemize}
We will refer to 
\[
S:=\left\{ x\in\Omega^{*}\,|\ \psi(x)\nsim0\right\} 
\]
 as to the \textbf{singular set} of the ultrafunction $u$. 
\end{defn}
\medskip{}
Notice that $w\text{\textdegree}$, the functional part of $u$, may
assume infinite values, but they are determined by the values of $w$,
which is a standard function defined on $\Omega\setminus\Xi$.
\begin{example}
Take $\varepsilon\sim0,$ and 
\[
u(x)=\frac{1}{x^{2}+\varepsilon^{2}}.
\]
In this case
\begin{itemize}
\item $w(x)=\frac{1}{x^{2}},$
\item $\psi(x)=\begin{cases}
-\frac{\varepsilon^{2}}{x^{2}(x^{2}+\varepsilon^{2})} & if\ x\neq0\\
\frac{1}{\varepsilon^{2}} & if\ x=0,
\end{cases}$
\item $S:=\left\{ x\in\mathbb{R}^{*}\,|\ \psi(x)\nsim0\right\} \subset\mathfrak{mon}(0)$.
\end{itemize}
\end{example}
We conclude this section with the following trivial propositions which,
nevertheless, are very useful in applications: 
\begin{prop}
\label{prop:carola}Let $W$ be a Banach space such that $\mathscr{D}(\Omega)\subset W\subseteq L_{loc}^{1}(\Omega)$
and assume that \textup{$u_{\lambda}\in V_{\lambda}$}\textup{\emph{
is weakly convergent in $W$; then if}}
\[
u=w\text{\textdegree}+\psi
\]
 is the canonical splitting\textup{\emph{ of }}$u:=\lim_{\lambda\uparrow\Lambda}\ u_{\lambda}$,
\textup{\emph{there exists a subnet $u_{n}:=u_{\lambda_{n}}$ such
that}}\textup{ }
\[
\lim_{n\rightarrow\infty}u_{n}=w\,\,\,\,weakly\,\,in\,\,W
\]
and 
\[
\forall v\in W,\,\,\sqint\psi v\,dx\sim0.
\]
\textup{\emph{Moreover, if}}\textup{ }
\[
\lim_{n\rightarrow\infty}\left\Vert u_{n}-w\right\Vert _{W}=0
\]
then \textup{$\left\Vert \psi\right\Vert _{W}\sim0.$ }
\end{prop}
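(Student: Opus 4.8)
The plan is to reduce both displayed identities to the single fact that the functional part $w$ of the canonical splitting coincides (a.e.) with the weak limit of a suitable subnet; once this is in hand, everything else is bookkeeping with the splitting. First I would extract that subnet. By hypothesis the net $\lambda\mapsto u_\lambda$ converges in the weak topology of $W$ to some $\hat w\in W$; on the bounded set in which the net eventually lies this topology is first countable (the situation of Proposition~\ref{prop:Ass} and of the Example following it), so there is a sequence $\lambda_n$ with $u_n:=u_{\lambda_n}\to\hat w$ weakly in $W$. Since $u_n$ is a subnet of an already convergent net, its weak limit is again $\hat w$; hence the first assertion $u_n\to w$ is equivalent to the single equality $w=\hat w$.

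Second, and this is the heart of the matter, I would identify $\hat w$ with $w$ by testing against $\mathscr{D}(\Omega)\subset W'$ and using the bridge (\ref{eq:teresa}) between the ordinary limit and the $\Lambda$-limit. For $\varphi\in\mathscr{D}(\Omega)$ the real net $\int_\Omega u_\lambda\varphi\,dx$ converges, so
\[
\langle\hat w,\varphi\rangle=\lim_{n\to\infty}\int_\Omega u_n\varphi\,dx=st\!\left(\lim_{\lambda\uparrow\Lambda}\int_\Omega u_\lambda\varphi\,dx\right)=st\!\left(\int^{*}u\,\varphi^{*}\,dx\right).
\]
The task is then to evaluate the right–hand side through the splitting $u=w\text{\textdegree}+\psi$ and to recognize it as $\int_\Omega w\varphi\,dx$: the contribution of $w\text{\textdegree}$ should return $\int_\Omega w\varphi\,dx$, since the canonical extension preserves integrals of standard functions up to infinitesimals (as in (\ref{eq:lilla-3}) and Example~\ref{exa:exa}), while the contribution of the singular part should be infinitesimal. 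Granting this, $\int_\Omega w\varphi\,dx=\langle\hat w,\varphi\rangle$ for every test function, so $w=\hat w$ a.e.

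Third, the two remaining claims follow. For any $v\in W$ I would write $\sqint\psi v\,dx=\sqint u\,v\text{\textdegree}\,dx-\sqint w\text{\textdegree}\,v\text{\textdegree}\,dx$ (legitimate since $\psi\in V_\Lambda(\Omega)$ and $v\text{\textdegree}=P\text{\textdegree}(v^{*})$ is the $\sqint$–orthogonal projection, cf. (\ref{eq:pia})); taking standard parts, the first term has standard part $\langle\hat w,v\rangle$ and the second $\int_\Omega wv\,dx$, which coincide by $w=\hat w$, so the finite quantity $\sqint\psi v\,dx$ has zero standard part against every $v$, i.e. is infinitesimal. For the strong–convergence clause, $\|u_n-w\|_W\to0$ makes the real net $\|u_{\lambda_n}-w\|_W$ infinitesimal in the $\Lambda$-limit; since $u=u\text{\textdegree}$ and $w\text{\textdegree}$ is the projection of $w^{*}$, one concludes $\|\psi\|_W=\|u-w\text{\textdegree}\|_W\sim0$.

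The main obstacle is the evaluation hidden in the second step: one must guarantee that $st\big(\int^{*}u\,\varphi^{*}\,dx\big)$ is genuinely read off by the pointwise standard part that defines $w$, i.e. that the singular part carries no mass visible to the duality and that passing from $\int^{*}$ to $\sqint$ (and from $w^{*}$ to $w\text{\textdegree}$) costs only an infinitesimal. This is precisely where weak convergence in $W$ is essential — it is what forces the functional part to lie in $W$ and to equal the weak limit — and it is the step to be carried out with care rather than by formal manipulation.
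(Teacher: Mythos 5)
Your proposal correctly locates the crux of the statement, but it never closes it, so as it stands this is a plan rather than a proof. The paper's own proof is a one-line appeal to Proposition \ref{prop:Ass}: extract a sequence $\lambda_{n}$ realizing the $\Lambda$-topology limit, so that $u_{n}:=u_{\lambda_{n}}$ converges weakly, and treat the rest as immediate. Your first and third steps reproduce exactly this plus bookkeeping; your second step is the attempt to supply what the paper leaves unsaid, namely the identification $w=\hat{w}$ between the functional part of the canonical splitting (defined \emph{pointwise} by $w(x)=st(u(x))$) and the weak limit $\hat{w}$. That identification is where the whole content of the proposition sits: weak convergence carries no pointwise information, so nothing formal forces the pointwise standard part of $u=\lim_{\lambda\uparrow\Lambda}u_{\lambda}$ to agree with the weak limit. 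A proof whose final paragraph flags its central step as ``to be carried out with care rather than by formal manipulation'' has a hole exactly where the proposition has content.

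Moreover, the route you sketch for that step cannot work as written, because it is circular. To evaluate $st\left(\int^{*}u\,\varphi^{*}\,dx\right)$ through the splitting you need (a) $\sqint w\text{\textdegree}\varphi^{*}\,dx\sim\int w\varphi\,dx$ and (b) $\sqint\psi\varphi^{*}\,dx\sim0$. Item (b) is a special case of the proposition's second assertion, which in your outline is \emph{derived afterwards} from $w=\hat{w}$; so step 2 uses the conclusion of step 3 and conversely. Item (a) is not covered by (\ref{eq:lilla-3}) or Example \ref{exa:exa}, which apply to $f\in V(\Omega)$: the pointwise standard part $w$ need not belong to $V(\Omega)$, need not be continuous, and a priori need not even be measurable, so its integration theory must be established, not invoked (note that the Corollary following the proposition, which would give (a) for $L^{1}$ functions, is itself proved in the paper \emph{using} this proposition). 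Two smaller soft spots: in deriving the second assertion you pair with arbitrary $v\in W$ and claim the term $\sqint u\,v\text{\textdegree}\,dx$ has standard part $\langle\hat{w},v\rangle$, but weak convergence in $W$ only controls pairings against elements of $W'$, not against elements of $W\subseteq L_{loc}^{1}(\Omega)$; and in the strong-convergence clause, passing from $\lim_{n}\left\Vert u_{n}-w\right\Vert _{W}=0$ (a statement about one subsequence) to $\left\Vert u-w\text{\textdegree}\right\Vert _{W}\sim0$ (a statement about the $\Lambda$-limit of the full net) requires Proposition \ref{nino} together with an estimate of $\left\Vert w^{*}-w\text{\textdegree}\right\Vert _{W}$, neither of which you supply.
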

\begin{proof}
It is an immediate consequence of Proposition \ref{prop:Ass}. 
\end{proof}
If we use the notation introduced in Definition \ref{def:St}, the
above proposition can be reformulated as follows:
\begin{prop}
If \textup{$u_{\lambda}\in V_{\lambda}$}\textup{\emph{ is weakly
convergent to $w$ in $W$ and }}$u:=\lim_{\lambda\uparrow\Lambda}\ u_{\lambda}$,
\textup{\emph{then }}
\[
w=St_{W_{weak}}(u).
\]
If \textup{$u_{\lambda}$}\textup{\emph{ is strongly convergent to
$w$ in $W$ then}}
\[
w=St_{W}(u).
\]
\end{prop}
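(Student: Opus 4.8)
The plan is to read the claim off directly from Definition \ref{def:St}, taking the net $\varphi(\lambda):=u_\lambda$ as the witnessing net, and then to reconcile the value so obtained with the functional part furnished by Proposition \ref{prop:carola}.

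First I would put $X:=W_{weak}$, the space $W$ equipped with its weak topology, which is Hausdorff, so that Definition \ref{def:St} applies. The net $\varphi:\mathfrak{L}\to W$, $\varphi(\lambda)=u_\lambda$, takes values in $W$ (the phrase ``$u_\lambda$ weakly convergent in $W$'' presupposes $u_\lambda\in W$), it converges in the weak topology to $w$, i.e. $\lim_{\lambda\to\Lambda}u_\lambda=w$ in $W_{weak}$, and its $\Lambda$-limit is $u=\lim_{\lambda\uparrow\Lambda}u_\lambda$. Thus $\varphi$ is exactly an admissible net for the point $u$ in Definition \ref{def:St}, and therefore $St_{W_{weak}}(u)=\lim_{\lambda\to\Lambda}u_\lambda=w$, which is the first assertion. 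The second assertion is obtained verbatim with the norm topology in place of the weak one: if $u_\lambda\to w$ strongly, the same net witnesses $St_W(u)=w$.

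The one point that genuinely requires care --- and which I regard as the main, if modest, obstacle --- is the well-posedness of $St_{W_{weak}}(u)$, i.e. that its value is independent of the witnessing net. I would argue as follows: if $\varphi_1,\varphi_2:\mathfrak{L}\to W$ both converge weakly, to $w_1$ and $w_2$, and share the $\Lambda$-limit $u$, then for every $v\in W'$ the real nets $\lambda\mapsto\langle\varphi_i(\lambda),v\rangle_W$ have one and the same $\Lambda$-limit $L_v$ (determined by $u$ through linearity and $\Lambda$-continuity of the pairing), whence by (\ref{eq:teresa}) one gets $\langle w_i,v\rangle_W=st(L_v)$ for $i=1,2$; as this holds for all $v\in W'$ and $W_{weak}$ is Hausdorff, $w_1=w_2$. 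The same computation identifies $w$ with the functional part in Proposition \ref{prop:carola}: that proposition produces a subnet of $u_\lambda$ converging weakly to the functional part of the canonical splitting of $u$, and a subnet of a weakly convergent net has the same limit, so the functional part coincides with $w$. Hence the statement is precisely the reformulation of Proposition \ref{prop:carola} announced in the text, and no further estimate is needed.
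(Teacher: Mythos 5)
Your proof is correct and follows essentially the route the paper intends: the paper states this proposition without any proof, as a direct reformulation of Proposition \ref{prop:carola} in the notation of Definition \ref{def:St}, and your observation that the net $u_{\lambda}$ itself serves as the witnessing net in that definition is precisely that reformulation. Your additional well-posedness check is sound and welcome (Definition \ref{def:St} silently assumes it), though it also follows more simply and in full generality from the fact that two nets with the same $\Lambda$-limit agree on a qualified set, so in any Hausdorff topology they cannot converge to two distinct points.
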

An immediate consequence of Proposition \ref{prop:carola} is the
following:
\begin{cor}
If $w\in L^{1}(\Omega)$ then
\[
\sqint w\text{\textdegree}(x)dx\sim\intop w(x)dx.
\]
\end{cor}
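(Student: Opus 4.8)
The plan is to realize the given $w$ as the functional part of a suitable $u\in V_{\Lambda}(\Omega)$ produced as a $\Lambda$-limit of a net lying in the approximating spaces $V_{\lambda}(\Omega)$, and then to read off the integral identity from the canonical splitting supplied by Proposition \ref{prop:carola}. Throughout I take $W=L^{1}(\Omega)$, which is admissible since $\mathscr{D}(\Omega)\subset L^{1}(\Omega)\subseteq L^{1}_{loc}(\Omega)$.

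First I would construct the net. Since $\mathscr{D}(\overline{\Omega})\subseteq V(\Omega)$, the space $V(\Omega)$ is dense in $L^{1}(\Omega)$; moreover the net $V_{\lambda}(\Omega)$ is increasing and eventually contains any prescribed $f\in V(\Omega)$, because $f\in V(\Omega)\cap\lambda\subseteq V_{\lambda}(\Omega)$ whenever $f\in\lambda$, and $\{\lambda:f\in\lambda\}$ is qualified. Choosing for each $\lambda$ a best $L^{1}$-approximation $u_{\lambda}\in V_{\lambda}(\Omega)$ of $w$ (which exists, $V_{\lambda}(\Omega)$ being finite-dimensional), one obtains $\lim_{\lambda\to\Lambda}\|w-u_{\lambda}\|_{L^{1}}=0$, so the net $u_{\lambda}$ converges strongly, hence weakly, to $w$ in $L^{1}(\Omega)$. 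Set $u:=\lim_{\lambda\uparrow\Lambda}u_{\lambda}\in V_{\Lambda}(\Omega)$ and let $u=w\text{\textdegree}+\psi$ be its canonical splitting. By Proposition \ref{prop:carola} the functional part of $u$ is precisely this $w$, so $w\text{\textdegree}$ is exactly the canonical extension appearing in the statement, and $\sqint\psi\,v\,dx\sim 0$ for every $v\in L^{1}(\Omega)$.

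I would then evaluate $\sqint u\,dx$ in two ways. On one hand, since $u\in V_{\Lambda}(\Omega)$, formula (\ref{eq:lilla-2}) gives $\sqint u\,dx=\int^{*}u\,dx$, and because $\int^{*}$ is the natural extension of the integral it commutes with the $\Lambda$-limit, $\int^{*}u\,dx=\lim_{\lambda\uparrow\Lambda}\int u_{\lambda}\,dx$. Taking standard parts via (\ref{eq:teresa}) and using $|\int u_{\lambda}\,dx-\int w\,dx|\le\|u_{\lambda}-w\|_{L^{1}}\to 0$ yields $\sqint u\,dx\sim\int w\,dx$. On the other hand, by linearity $\sqint u\,dx=\sqint w\text{\textdegree}\,dx+\sqint\psi\,dx$, and testing the singular part against $v\equiv 1$ — legitimate because $\Omega$ is bounded, so $1\in L^{1}(\Omega)$ — the second conclusion of Proposition \ref{prop:carola} gives $\sqint\psi\,dx=\sqint\psi\cdot 1\,dx\sim 0$. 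Combining the two computations yields $\sqint w\text{\textdegree}\,dx\sim\int w\,dx$.

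The genuinely delicate points are the construction of the net and the identification of its functional part: one must guarantee that the minimizers $u_{\lambda}$ actually approximate $w$, which is where density of $V(\Omega)$ in $L^{1}(\Omega)$ and the cofinality of the directed family enter, and one must know that the weak $L^{1}$-limit coincides with the functional part of $u$ — this is exactly the content of Proposition \ref{prop:carola}. Once the net is in place the rest is a routine transfer of the continuity of the integral through the $\Lambda$-limit and the standard part, with the infinitesimality of the singular contribution captured cleanly by testing $\psi$ against the constant $1$. It is worth noting that a naive direct estimate, reducing to $\sqint w\text{\textdegree}\,dx=\sqint w^{*}\,dx=\sum_{q\in\Gamma}w^{*}(q)\eta_{q}$ and approximating $w$ by elements of $V(\Omega)$, runs into the obstacle that the $\text{\textdegree}$-projection need not be bounded on $L^{1}$; the net-plus-splitting argument sidesteps this entirely.
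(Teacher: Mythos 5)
Your proposal is correct and follows essentially the same route as the paper: approximate $w$ strongly in $L^{1}(\Omega)$ by elements of the finite-dimensional spaces, pass to the $\Lambda$-limit $u$, invoke Proposition \ref{prop:carola} to identify the functional part of $u$ with $w$, and evaluate $\sqint u\,dx$ in two ways. The only cosmetic differences are that the paper reindexes a strongly convergent sequence by $|\lambda|$ rather than taking best approximations in $V_{\lambda}(\Omega)$, and it disposes of the singular term via $\left\Vert \psi\right\Vert _{L^{1*}}\sim0$ together with $\sqint\psi\,dx=\int^{*}\psi\,dx$ instead of testing $\psi$ against $v\equiv1$.
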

\begin{proof}
Since $V_{\Lambda}(\Omega)$ is dense in $L^{1}(\Omega)$ there is
a sequence $u_{n}\in V_{\Lambda}(\Omega)$ which converges strongly
to $w$ in $L^{1}(\Omega)$. Now set 
\[
u:=\lim_{\lambda\uparrow\Lambda}\ u_{|\lambda|}.
\]
By Proposition \ref{prop:carola}, we have that 
\[
u=w\text{\textdegree}+\psi
\]
with $\left\Vert \psi\right\Vert _{L^{1*}}\sim0.$ Since $u$ and
$w\text{\textdegree}$ are in $V_{\Lambda}(\Omega)$, then also $\psi\in V_{\Lambda}(\Omega)$,
so that $\sqint\psi dx=\int^{*}\psi dx\sim0$. Then
\[
\sqint u\text{\emph{(x)dx}\ensuremath{\sim}}\sqint w\text{\textdegree\emph{(x)dx}}.
\]
On the other hand, 
\begin{align*}
\sqint u\text{\emph{(x)dx}} & =\int^{*}u\text{\emph{(x)dx}}=\lim_{\lambda\uparrow\Lambda}\ \int u_{|\lambda|}dx\\
 & \sim\lim_{\lambda\rightarrow\Lambda}\ \int u_{|\lambda|}dx=\lim_{n\rightarrow\infty}\ \int_{\Omega}u_{n}dx=\intop w(x)dx.
\end{align*}
\end{proof}

\section{Differential calculus for ultrafunctions\label{sec:Differential-calculus}}

In this section, we will equip the Caccioppoli space of ultrafunctions
$V_{\Lambda}(\Omega)$ with a suitable notion of derivative which
generalizes the distributional derivative. Moreover we will extend
the Gauss' divergence theorem to the environment of ultrafunctions
and finally we will show the relationship between ultrafunctions and
distributions.

\subsection{The generalized derivative\label{subsec:gelsomina}}

If $u\in V_{\Lambda}(\Omega)\cap\left[C^{1}(\Omega)\right]^{*},$
then, $\partial_{i}^{*}u$ is well defined and hence, using Definition
\ref{def:lina}, we can define an operator
\[
D_{i}\,:\,V_{\Lambda}(\Omega)\cap\left[C^{1}(\Omega)\right]^{*}\rightarrow\,\,V_{\Lambda}(\Omega)
\]
as follows
\begin{equation}
D_{i}u\text{\textdegree}=\left(\partial_{i}^{*}u\right)\text{\textdegree}.\label{eq:ellera-1}
\end{equation}
However it would be useful to extend the operator $D_{i}$ to all
the ultrafunctions in $V_{\Lambda}(\Omega)$ to include in the theory
of ultrafunctions also the weak derivative. Moreover such an extension
allows to compare ultrafunctions with  distributions. In this section
we will define the properties that a generalized derivative must have
(Definition \ref{def:deri}) and in section \ref{sec:FSU}, we will
show that these properties are consistent; we will do that by a construction
of the generalized derivative.
\begin{defn}
\label{def:deri}The generalized derivative 
\[
D_{i}:\ V_{\Lambda}(\Omega)\rightarrow V_{\Lambda}(\Omega)
\]
is an operator defined on a Caccioppoli ultrafunction space $V_{\Lambda}(\Omega)$
which satisfies the following properties:

\begin{enumerate}[label=\Roman*.]
\item $V_{\Lambda}$ has $\sigma$-basis $\left\{ \sigma_{a}(x)\right\} _{a\in\Gamma}$,
such that $\forall a\in\Gamma$ the support of $D_{i}\sigma_{a}$
is contained in $\mathfrak{mon}(a)$;
\item if $u\in V_{\Lambda}(\Omega)\cap\left[C^{1}(\Omega)\right]^{*}$,
then, 
\begin{equation}
D_{i}u\text{\textdegree}=\left(\partial_{i}^{*}u\right)\text{\textdegree};\label{eq:ellera}
\end{equation}
\item $\forall u,v\in V_{\Lambda}(\Omega)$, 
\[
\sqint D_{i}uv\,dx=-\sqint uD_{i}v\,dx
\]
\item if $E\in\mathfrak{C}(\Omega)$, then $\forall v\in V_{\Lambda}(\Omega)$,
\[
\sqint D_{i}\theta_{E}v\,dx=-\int_{\partial E}^{*}v\,(\mathbf{e}_{i}\cdot\mathbf{n}_{E})\,dS
\]
where $\mathbf{n}_{E}$ is the measure theoretic unit outer normal,
integrated on the reduced boundary of $E$ with respect to the $(n-1)$-Hausdorff
measure $dS$ (see e.g. \cite[Section 5.7]{eva-gar}) and $(\mathbf{e}_{1},....,\mathbf{e}_{N})$
is the canonical basis of $\mathbb{R}^{N}$.
\end{enumerate}
\end{defn}
We remark that, in the framework of the theory of Caccioppoli sets,
the classical formula corresponding to IV is the following: $\forall v\in\mathfrak{\mathscr{C}}(\Omega)$,
\[
\int\partial_{i}\theta_{E}v\,dx=-\int_{\partial E}v\,(\mathbf{e}_{i}\cdot\mathbf{n}_{E})\,dS.
\]

\medskip{}
The existence of a generalized derivative will be proved in section
\ref{sec:FSU}.

Now let us define some differential operators: 
\begin{itemize}
\item $\nabla=(\partial_{1},...,\partial_{N})$ will denote the usual gradient
of standard functions;
\item $\nabla^{*}=(\partial_{1}^{*},...,\partial_{N}^{*})$ will denote
the natural extension of the gradient (in the sense of NSA);
\item $D=(D_{1},...,D_{N})$ will denote the canonical extension of the
gradient in the sense of the ultrafunctions (Definition \ref{def:deri}).
\end{itemize}
Next let us consider the divergence:
\begin{itemize}
\item $\nabla\cdot\phi=\partial_{1}\phi_{1}+...+\partial_{N}\phi_{N}$ will
denote the usual divergence of standard vector fields $\phi\in\left[\mathscr{C}^{1}(\overline{\Omega})\right]^{N}$;
\item $\nabla^{*}\cdot\phi=\partial_{1}^{*}\phi_{1}+...+\partial_{N}^{*}\phi_{N}$
will denote the divergence of internal vector fields $\phi\in\left[\mathscr{C}^{1}(\overline{\Omega})^{*}\right]^{N}$;
\item $D\cdot\phi$ will denote the unique ultrafunction $D\cdot\phi\in V_{\Lambda}(\Omega)$
such that, $\forall v\in V_{\Lambda}(\Omega),$
\begin{equation}
\sqint D\cdot\phi vdx=-\sqint\phi(x)\cdot Dv\,dx.\label{eq:rosa}
\end{equation}
\end{itemize}
And finally, we can define the Laplace operator:
\begin{itemize}
\item $\bigtriangleup\text{\textdegree}$ or $D^{2}$ will denote the Laplace
operator defined by $D\circ D$.
\end{itemize}

\subsection{The Gauss' divergence theorem }

By Definition \ref{def:deri}, IV, for any set $E\in\mathfrak{C}_{\Lambda}(\Omega)$
and $v\in V_{\Lambda}(\Omega)$,
\[
\sqint D_{i}\theta_{E}v\,dx=-\int_{\partial E}^{*}v\,(\mathbf{e}_{i}\cdot\mathbf{n}_{E})\,dS,
\]
and by Definition \ref{def:deri}, III, 
\[
\sqint D_{i}v\,\theta_{E}dx=\int_{\partial E}^{*}v\,(\mathbf{e}_{i}\cdot\mathbf{n}_{E})\,dS.
\]
Now, if we take a vector field $\phi=(v_{1},...,v_{N})\in\left[V_{\Lambda}(\Omega)\right]^{N}$,
by the above identity, we get
\begin{equation}
\sqint D\cdot\phi\,\theta_{E}\,dx=\int_{\partial E}^{*}\phi\cdot\mathbf{n}_{E}\,dS.\label{eq:camilla}
\end{equation}
Now, if $\phi\in\mathscr{C}^{1}(\overline{\Omega})$ and $\partial E$
is smooth, we get the divergence Gauss' theorem:
\[
\int_{E}\nabla\cdot\phi\,dx=\int_{\partial E}\phi\cdot\mathbf{n}_{E}\,dS.
\]

Then, (\ref{eq:camilla}) is a generalization of the Gauss' theorem
which makes sense for any set $E\in\mathfrak{C}_{\Lambda}(\Omega).$
Next, we want to generalize Gauss' theorem to any subset $A\subset\Omega$. 

First of all we need to generalize the notion of Caccioppoli perimeter
$p(E)$ to any arbitrary set. As we have seen in Section \ref{subsec:Cacc},
if $E\in\mathfrak{C}(\mathbb{\mathrm{\Omega}})$ is a special Caccioppoli
set, we have that 
\[
p(E)=\int|\nabla\theta_{E}|\,dx,
\]
and it is possible to define a $(n-1)$-dimensional measure $dS$
as follows
\[
\int_{\partial E}v(x)\,dS:=\int|\nabla\theta_{E}|\,v(x)\,dx.
\]
In particular, if the reduced boundary of $E$ coincides with $\partial E,$
we have that (see \cite[Section 5.7]{eva-gar})
\[
\int_{\partial E}v(x)\,dS=\int_{\partial E}v(x)\,d\mathcal{H}^{N-1}.
\]

Then, the following definition is a natural generalization:
\begin{defn}
\label{def:misura}If $A$ is a measurable subset of $\Omega$, we
set
\[
p(A):=\sqint|D\theta_{A}^{\text{\textdegree}}|\,dx
\]
and $\forall v\in V_{\Lambda}(\Omega),$
\begin{equation}
\sqint_{\partial A}v(x)\,dS:=\sqint v(x)\,|D\theta_{A}^{\text{\textdegree}}|\,dx.\label{eq:nina}
\end{equation}
\end{defn}
\begin{rem}
Notice that
\[
\sqint_{\partial A}v(x)\,dS\neq\sqint v(x)\chi_{\partial A}^{\text{\textdegree}}(x)\,dx.
\]
In fact the left hand term has been defined as follows:
\[
\sqint_{\partial A}v(x)\,dS=\sum_{x\in\Gamma}v(x)\,|D\theta_{A}^{\text{\textdegree}}(x)|\,\eta_{x}
\]
while the right hand term is
\[
\sqint v(x)\chi_{\partial A}^{\text{\textdegree}}(x)\,dx=\sum_{x\in\Gamma}v(x)\chi_{\partial A}^{\text{\textdegree}}(x)\,\eta_{x},
\]
in particular if $\partial A$ is smooth and $v(x)$ is bounded, $\sum_{_{x\in\Gamma}}v(x)\chi_{\partial A}^{\text{\textdegree}}(x)\,\eta_{x}$
is an infinitesimal number.
\end{rem}
\begin{thm}
\label{thm:41}If $A$ is an arbitrary measurable subset of $\Omega$,
we have that
\begin{equation}
\sqint D\cdot\phi\,\theta_{A}^{\text{\textdegree}}\,dx=\sqint_{\partial A}\phi\cdot\mathbf{n}_{A}^{\text{\textdegree}}(x)\,dS,\label{eq:gauus}
\end{equation}
where
\[
\mathbf{n}_{A}^{\text{\textdegree}}(x)=\begin{cases}
-\frac{D\theta_{A}^{\text{\textdegree}}(x)}{|D\theta_{A}^{\text{\textdegree}}(x)|} & if\,\,\,D\theta_{A}^{\text{\textdegree}}(x)\neq0\\
0 & if\,\,\,D\theta_{A}^{\text{\textdegree}}(x)=0
\end{cases}
\]
\end{thm}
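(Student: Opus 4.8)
The plan is to reduce the general statement about an arbitrary measurable set $A$ to the already-established identity \eqref{eq:camilla}, which holds for special Caccioppoli sets $E \in \mathfrak{C}(\Omega)$. The key observation is that $\theta_A^{\text{\textdegree}}$ is a genuine ultrafunction in $V_\Lambda(\Omega)$, so the generalized divergence $D \cdot \phi$ pairs with it through property III of Definition \ref{def:deri}. I would start by unwinding the definitions on the right-hand side: using \eqref{eq:nina}, write
\[
\sqint_{\partial A}\phi\cdot\mathbf{n}_{A}^{\text{\textdegree}}(x)\,dS
=\sqint \bigl(\phi\cdot\mathbf{n}_{A}^{\text{\textdegree}}\bigr)\,|D\theta_{A}^{\text{\textdegree}}|\,dx.
\]
By the explicit formula for $\mathbf{n}_A^{\text{\textdegree}}$, on the set where $D\theta_A^{\text{\textdegree}}\neq 0$ the product $\mathbf{n}_A^{\text{\textdegree}}\,|D\theta_A^{\text{\textdegree}}| = -D\theta_A^{\text{\textdegree}}$, while on the complementary set both sides vanish. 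Hence the right-hand side collapses to $-\sqint \phi(x)\cdot D\theta_A^{\text{\textdegree}}\,dx$.

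**Main computation.** It then remains to show
\[
\sqint D\cdot\phi\,\theta_{A}^{\text{\textdegree}}\,dx = -\sqint \phi(x)\cdot D\theta_A^{\text{\textdegree}}\,dx.
\]
This is exactly the defining relation \eqref{eq:rosa} for the generalized divergence $D\cdot\phi$, applied with the test ultrafunction $v = \theta_A^{\text{\textdegree}} \in V_\Lambda(\Omega)$. I would verify that $\theta_A^{\text{\textdegree}}$ is admissible as such a test function: $\theta_A$ is a bounded measurable function (with values in $[0,1]$), so its canonical projection $\theta_A^{\text{\textdegree}}$ lies in $V_\Lambda(\Omega)$ by the definition of the $P^{\text{\textdegree}}$ operator in \eqref{eq:pia}. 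With that in hand, the identity is immediate from \eqref{eq:rosa}, and chaining the two displayed equalities gives \eqref{eq:gauus}.

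**Expected obstacle.** The delicate point is not the algebra but the measure-theoretic consistency of the definition of $\mathbf{n}_A^{\text{\textdegree}}$: one must check that the component-wise identity $\phi\cdot\mathbf{n}_A^{\text{\textdegree}}\,|D\theta_A^{\text{\textdegree}}| = -\phi\cdot D\theta_A^{\text{\textdegree}}$ holds pointwise on all of $\Gamma$, including the points where $D\theta_A^{\text{\textdegree}}$ vanishes (where the normal is set to $0$ by convention and both sides are genuinely zero). Since the pointwise integral $\sqint$ is just the hyperfinite sum $\sum_{x\in\Gamma}(\cdot)\,\eta_x$, and the generalized derivative $D_i$ is a well-defined internal operator by Definition \ref{def:deri}, each term of the sum is unambiguous and the splitting into the two cases is consistent term by term. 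Thus the whole argument is essentially a formal rewriting, and the only real content is recognizing that $\theta_A^{\text{\textdegree}}$ qualifies as a legitimate test ultrafunction so that \eqref{eq:rosa} applies verbatim — the generality of $A$ (arbitrary measurable, rather than a special Caccioppoli set) costs nothing once one works with $\theta_A^{\text{\textdegree}}$ in $V_\Lambda(\Omega)$ rather than with $\theta_E$ directly.
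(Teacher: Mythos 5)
Your proposal is correct and is essentially the paper's own proof: the paper likewise applies the defining relation \eqref{eq:rosa} (equivalently, property III of Definition \ref{def:deri}) with $v=\theta_{A}^{\text{\textdegree}}$ to get $\sqint D\cdot\phi\,\theta_{A}^{\text{\textdegree}}\,dx=-\sqint\phi\cdot D\theta_{A}^{\text{\textdegree}}\,dx$, and then rewrites the right-hand side via the definition of $\mathbf{n}_{A}^{\text{\textdegree}}$ and \eqref{eq:nina}. The only difference is that you run the chain of equalities from the boundary integral backwards rather than from the volume integral forwards, which is immaterial.
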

\begin{proof}
By Theorem \ref{def:deri}, III, 

\[
\sqint D\cdot\phi\,\theta_{A}^{\text{\textdegree}}dx=-\sqint\phi\cdot D\theta_{A}^{\text{\textdegree}}dx,
\]
then, using the definition of $\mathbf{n}_{A}^{\text{\textdegree}}(x)$
and (\ref{eq:nina}), the above formula can be written as follows:
\[
\sqint D\cdot\phi\,\theta_{A}^{\text{\textdegree}}dx=\sqint\phi\cdot\mathbf{n}_{A}^{\text{\textdegree}}\:|D\theta_{A}^{\text{\textdegree}}|\,dx=\sqint_{\partial A}\phi\cdot\mathbf{n}_{A}^{\text{\textdegree}}\,dS.
\]
\end{proof}
Clearly, if $E\in\mathfrak{C}_{\Lambda}(\Omega)$, then
\[
\sqint_{\partial E}\phi\cdot\mathbf{n}_{E}^{\text{\textdegree}}\,dS=\int_{\partial E}\phi\cdot\mathbf{n}_{E}\,dS.
\]

\begin{example}
If $A$ is the Koch snowflake, then the usual Gauss' theorem makes
no sense since $p(A)=+\infty$; on the other hand equation (\ref{eq:gauus})
holds true. Moreover, the perimeter in the sense of ultrafunction
is an infinite number given by Definition \ref{def:misura}. In general,
if $\partial A$ is a $d$-dimensional fractal set, it is an interesting
open problem to investigate the relation between its Hausdorff measure
and the ultrafunction ``measure'' $dS=|D\theta_{A}^{\text{\textdegree}}|\,dx$.
\end{example}

\subsection{Ultrafunctions and distributions}

One of the most important properties of the ultrafunctions is that
they can be seen (in some sense that we will make precise in this
section) as generalizations of the distributions.
\begin{defn}
\label{DEfCorrespondenceDistrUltra}The space of \textbf{generalized
distribution} on $\Omega$ is defined as follows: 
\[
\mathcal{\mathscr{D}}_{G}^{\prime}(\Omega)=V_{\Lambda}(\Omega)/N,
\]
where 
\[
N=\left\{ \tau\in V_{\Lambda}(\Omega)\ |\ \forall\varphi\in\mathscr{D}(\Omega),\ \int\tau\varphi\ dx\sim0\right\} .
\]
\end{defn}
The equivalence class of $u$ in $V_{\Lambda}(\Omega)$ will be denoted
by 
\[
\left[u\right]_{\mathscr{D}}.
\]

\begin{defn}
Let $\left[u\right]_{\mathfrak{\mathscr{D}}}$ be a generalized distribution.
We say that $\left[u\right]_{\mathscr{D}}$ is a bounded generalized
distribution if, $\forall\varphi\in\mathfrak{\mathcal{\mathscr{D}}}(\Omega),\,\int u\varphi^{*}\ dx\ \ \text{is\ finite}$.
We will denote by $\mathscr{D}_{GB}^{\prime}(\Omega)$ the set of
the bounded generalized distributions.
\end{defn}
We have the following result. 
\begin{thm}
\label{bello}There is a linear isomorphism 
\[
\Phi:\mathfrak{\mathcal{\mathscr{D}}}_{GB}^{\prime}(\Omega)\rightarrow\mathfrak{\mathscr{D}}^{\prime}(\Omega),
\]
defined by 
\[
\left\langle \Phi\left(\left[u\right]_{\mathfrak{\mathcal{\mathscr{D}}}}\right),\varphi\right\rangle _{\mathfrak{\mathcal{\mathscr{D}}}(\Omega)}=st\left(\sqint u\,\varphi^{\ast}\ dx\right).
\]
\end{thm}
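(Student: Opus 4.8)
The plan is to check, in turn, that $\Phi$ is well defined and linear, that it is injective, that its values actually lie in $\mathscr{D}'(\Omega)$, and that it is onto; only the third point is genuinely delicate. \emph{Well-definedness, linearity and injectivity.} Fix a bounded generalized distribution $[u]_{\mathscr{D}}$. For every $\varphi\in\mathscr{D}(\Omega)$ the number $\sqint u\,\varphi^{*}\,dx$ is finite by hypothesis, hence it has a standard part in $\mathbb{R}$; since $\varphi\mapsto\varphi^{*}$, the pointwise integral, and $st(\cdot)$ on the finite hyperreals are all linear, $\varphi\mapsto st(\sqint u\varphi^{*}dx)$ is a linear functional. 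Independence of the representative is exactly the definition of $N$: if $u'-u\in N$ then $\sqint(u'-u)\varphi^{*}dx\sim0$, so the two standard parts agree. The same remark gives injectivity immediately, for $\Phi([u]_{\mathscr{D}})=0$ says that $\sqint u\varphi^{*}dx\sim0$ for every $\varphi$, i.e.\ $u\in N$, i.e.\ $[u]_{\mathscr{D}}=0$ in the quotient. (Here the pairing used to define $N$ is read as the same $\sqint\,\cdot\,\varphi^{*}dx$ appearing in $\Phi$, which is what makes the two notions match.)

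\emph{The image lies in $\mathscr{D}'(\Omega)$.} This is the heart of the matter. Unwinding the pointwise integral through the $\sigma$-basis, write $u=\lim_{\lambda\uparrow\Lambda}u_{\lambda}$, $\Gamma=\lim_{\lambda\uparrow\Lambda}\Gamma_{\lambda}$, $\eta_{q}=\lim_{\lambda\uparrow\Lambda}\eta_{q,\lambda}$, and set
\[
T_{\lambda}(\varphi):=\sum_{q\in\Gamma_{\lambda}}u_{\lambda}(q)\,\varphi(q)\,\eta_{q,\lambda},
\]
the pairing of $\varphi$ with the finite signed measure $\mu_{\lambda}=\sum_{q\in\Gamma_{\lambda}}u_{\lambda}(q)\eta_{q,\lambda}\,\delta_{q}$; each $T_{\lambda}$ is a distribution. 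By the very definition of the hyperfinite sum one has $\sqint u\varphi^{*}dx=\lim_{\lambda\uparrow\Lambda}T_{\lambda}(\varphi)$, so (\ref{eq:teresa}) gives $\langle\Phi([u]_{\mathscr{D}}),\varphi\rangle=\lim_{\lambda\rightarrow\Lambda}T_{\lambda}(\varphi)$ for every $\varphi$. Fixing a compact $K\subset\Omega$ and a countable dense set $\{\varphi_{j}\}$ in the Fréchet space $\mathscr{D}_{K}(\Omega)$ of test functions supported in $K$, Proposition \ref{prop:Ass} applied to the net $\lambda\mapsto(T_{\lambda}(\varphi_{j}))_{j}$ with values in the metrizable space $\mathbb{R}^{\mathbb{N}}$ extracts a single sequence $\lambda_{n}$ with $T_{\lambda_{n}}(\varphi_{j})\to\langle\Phi([u]_{\mathscr{D}}),\varphi_{j}\rangle$ for all $j$. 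The plan is then to invoke the uniform boundedness principle on the barrelled space $\mathscr{D}_{K}(\Omega)$: a pointwise bounded sequence of distributions is equicontinuous, and an equicontinuous sequence converging on a dense set converges everywhere to a continuous limit, which must be $\Phi([u]_{\mathscr{D}})$. The step I expect to cost the most work is establishing pointwise boundedness of $\{T_{\lambda_{n}}\}$ on all of $\mathscr{D}_{K}(\Omega)$: finiteness of $\sqint u\varphi^{*}dx$ supplies, for each single $\varphi$, a qualified set on which $|T_{\lambda}(\varphi)|$ is bounded, but this set depends on $\varphi$ and the ultrafilter is not closed under uncountable intersections, so an extra saturation/overspill argument is needed to make the bound uniform. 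A parallel route I would try is to produce directly, for each $K$, standard constants $C$ and $m$ with $|\sqint u\varphi^{*}dx|\le C\,p_{K,m}(\varphi)$ for all $\varphi\in\mathscr{D}_{K}(\Omega)$, $p_{K,m}$ being the usual $C^{m}$-seminorm; this yields continuity at once but again requires converting the external hypothesis into an internal seminorm estimate.

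\emph{Surjectivity.} Given $T\in\mathscr{D}'(\Omega)$ I would construct $u$ level by level. The pointwise integral makes $V_{\Lambda}(\Omega)$ an internal inner-product space via (\ref{eq:rina}), positive definite by Theorem \ref{thm:sette} (equivalently by (\ref{eq:ola})); at each level the finite-dimensional space $G_{\lambda}:=\mathrm{Span}(\lambda\cap\mathscr{D}(\Omega))\subset V_{\lambda}(\Omega)$ inherits the restriction of this form, still positive definite. As $T$ restricts to a linear functional on $G_{\lambda}$, finite-dimensional Riesz representation furnishes a unique $u_{\lambda}\in G_{\lambda}$ with $(u_{\lambda},\varphi)_{\lambda}=\langle T,\varphi\rangle$ for every $\varphi\in\lambda\cap\mathscr{D}(\Omega)$, where $(\cdot,\cdot)_{\lambda}$ is the finite-level form whose $\Lambda$-limit is $\sqint\,\cdot\,\cdot\,dx$. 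Set $u:=\lim_{\lambda\uparrow\Lambda}u_{\lambda}\in V_{\Lambda}(\Omega)$. For a fixed $\varphi_{0}\in\mathscr{D}(\Omega)$ the relation $(u_{\lambda},\varphi_{0})_{\lambda}=\langle T,\varphi_{0}\rangle$ holds on the qualified set $\{\lambda:\varphi_{0}\in\lambda\}$, so passing to the $\Lambda$-limit gives $\sqint u\varphi_{0}^{*}dx=\langle T,\varphi_{0}\rangle$ exactly. In particular the left-hand side is finite, so $[u]_{\mathscr{D}}\in\mathscr{D}_{GB}'(\Omega)$, and taking standard parts yields $\Phi([u]_{\mathscr{D}})=T$.
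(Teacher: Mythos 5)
Your first and third parts are in good shape, and in fact the paper gives you nothing to compare against: its ``proof'' of Theorem \ref{bello} is literally a citation of \cite{algebra}. The well-definedness/linearity/injectivity discussion is correct, and the surjectivity construction via level-wise Riesz representation with respect to the finite-level pointwise forms is essentially sound, modulo the routine {\L}o\'s-type remark that positive definiteness of the internal form $\sqint uv\,dx$ on $V_{\Lambda}(\Omega)$ (Theorem \ref{thm:sette}) descends to the approximating forms on a qualified set of $\lambda$'s, which is all the construction needs.

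The genuine gap is exactly the step you flag as unfinished: showing that $\Phi([u]_{\mathscr{D}})$ is \emph{continuous}, i.e.\ lands in $\mathscr{D}'(\Omega)$ and not merely in the algebraic dual. This gap is not a technicality that ``an extra saturation/overspill argument'' will repair, and your own surjectivity argument shows why: finite-dimensional Riesz representation never uses continuity of the functional being represented. Run your part 3 verbatim on an arbitrary linear functional $L$ on $\mathscr{D}(\Omega)$ --- including a discontinuous, everywhere-defined one, which exists by a Hamel-basis argument (the axiom of choice is certainly available, since the whole framework rests on ultrafilters). You obtain $u\in V_{\Lambda}(\Omega)$ with $\sqint u\,\varphi^{*}dx=L(\varphi)$ exactly for every standard $\varphi$; hence $[u]_{\mathscr{D}}$ is a bounded generalized distribution in the sense of the paper (finiteness is only required against standard test functions) and $\Phi([u]_{\mathscr{D}})=L\notin\mathscr{D}'(\Omega)$. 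So under the paper's definitions the image of $\Phi$ is the full algebraic dual, and no Banach--Steinhaus argument can succeed: pointwise boundedness of your extracted sequence $T_{\lambda_{n}}$ genuinely fails for such $u$, and the overspill route only yields an internal test function, infinitesimal in every standard seminorm, whose pairing with $u$ is infinite --- which contradicts nothing, because boundedness was assumed only against standard test functions. The statement becomes provable (and your overspill idea then does close the continuity step) only if ``bounded'' is strengthened in the style of Robinson's nonstandard theory of distributions: finiteness of $\sqint u\,\varphi\,dx$ for all \emph{internal} $\varphi\in\mathscr{D}(\Omega)^{*}$ with near-standard compact support and finite seminorms of every standard order. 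That is, in substance, the setting of \cite{algebra}; note that with the stronger hypothesis your Riesz representative is no longer automatically admissible, so surjectivity must then be reproved with a representative that is bounded in this stronger sense. As written, your proof (and, read literally, the theorem itself) founders on this point, and the tension between your parts 2 and 3 is the cleanest way to see it.
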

\begin{proof}
For the proof see e.g. \cite{algebra}.
\end{proof}
From now on we will identify the spaces $\mathfrak{\mathscr{D}}_{GB}^{\prime}(\Omega)$
and $\mathscr{D}^{\prime}(\Omega);$ so, we will identify $\left[u\right]_{\mathscr{D}}$
with $\Phi\left(\left[u\right]_{\mathscr{D}}\right)$ and we will
write $\left[u\right]_{\mathscr{D}}\in\mathfrak{\mathscr{D}}^{\prime}(\Omega)$
and 
\[
\left\langle \left[u\right]_{\mathscr{D}},\varphi\right\rangle _{\mathscr{D}(\Omega)}:=\langle\Phi[u]_{\mathscr{D}},\varphi\rangle_{\mathscr{D}(\Omega)}=st\left(\sqint u\ \varphi^{\ast}\ dx\right).
\]

Moreover, with some abuse of notation, we will write also that $\left[u\right]_{\mathscr{D}}\in L^{2}(\Omega),\ \left[u\right]_{\mathfrak{\mathscr{D}}}\in V(\Omega),$
etc. meaning that the distribution $\left[u\right]_{\mathscr{D}}$
can be identified with a function $f$ in $L^{2}(\Omega),$ $V(\Omega),$
etc. By our construction, this is equivalent to say that $f^{\ast}\in\left[u\right]_{\mathfrak{\mathscr{D}}}.$
So, in this case, we have that $\forall\varphi\in\mathfrak{\mathscr{D}}(\Omega)$,
\[
\left\langle \left[u\right]_{\mathfrak{\mathscr{D}}},\varphi\right\rangle _{\mathfrak{\mathscr{D}}(\Omega)}=st\left(\int^{\ast}u\ \varphi^{\ast}\ dx\right)=st\left(\int^{\ast}f^{\ast}\varphi^{\ast}dx\right)=\int f\ \varphi\ dx.
\]

\begin{rem}
\label{rem:Schw}Since an ultrafunction $u:\,\Omega^{*}\rightarrow\mathbb{R}^{*}$
is univocally determined by its value in $\Gamma$, we may think of
the ultrafunction as being defined only on $\Gamma$ and to denote
them by $V_{\Lambda}(\Gamma)$; the set $V_{\Lambda}(\Gamma)$ is
an algebra which extends the algebra of continuous functions $\mathscr{C}(\Omega)$
if it is equipped with the pointwise product. 

Moreover, we recall that, by a well known theorem of Schwartz, any
tempered distribution can be represented as $\partial^{\alpha}f$,
where $\alpha$ is a multi-index and $f$ is a continuous function.
If we identify $T=\partial^{\alpha}f$ with the ultrafunction $D^{\alpha}f\text{\textdegree}$,
we have that the set of tempered distributions $\mathscr{S}'$ is
contained in $V_{\Lambda}(\Gamma)$. However the Schwartz impossibility
theorem (see introduction) is not violated since $(V_{\Lambda}(\Gamma),+,\,\cdot\,,\,D)$
is not a differential algebra, since the Leibnitz rule does not hold
for some couple of ultrafunctions.
\end{rem}

\section{Construction of the Caccioppoli space of ultrafunctions\label{sec:FSU}}

In this section we will prove the existence of Caccioppoli spaces
of ultrafunctions (see Definition \ref{def:scorre}) by an explicit
construction.

\subsection{Construction of the space $V_{\Lambda}(\Omega)$}

In this section we will construct a space of ultrafunctions $V_{\Lambda}(\Omega)$
and in the next section we will equip it with a $\sigma$-basis in
such a way that $V_{\Lambda}(\Omega)$ becomes a Caccioppoli space
of ultrafunctions according to Definition \ref{def:scorre}.
\begin{defn}
Given a family of open sets $\mathfrak{R}_{0}$ , we say that a family
of open sets $\mathfrak{B}=\left\{ E_{k}\right\} _{k\in K}$ is a
\emph{basis} for $\mathfrak{R}_{0}$ if 
\begin{itemize}
\item $\forall k\neq h,\,E_{k}\cap E_{h}=\emptyset;$ 
\item $\forall A\in\mathfrak{R}_{0}$, there is a set of indices \emph{$K_{E}\subset K$}
such that\emph{
\begin{equation}
A=int\left(\bigcup_{k\in K_{E}}\overline{E_{k}}\right);\label{eq:giulia}
\end{equation}
}
\item $\mathfrak{B}$ is the smallest family of sets which satisfies the
above properties.
\end{itemize}
We we will refer to the family $\mathfrak{R}$ of \textbf{all} the
open sets which can be written by the espression (\ref{eq:giulia})\emph{
}as to the family\emph{ generated} by $\mathfrak{R}_{0}$. 
\end{defn}
Let us verify that 
\begin{lem}
For any finite family of special Caccioppoli sets $\mathfrak{C}_{0}$,
there exists a basis $\mathfrak{B}$ whose elements are special Caccioppoli
sets. Moreover also the set $\mathfrak{C}$ generated by $\mathfrak{C}_{0}$
consists of special Caccioppoli sets.
\end{lem}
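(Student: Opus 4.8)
The plan is to take $\mathfrak{B}$ to be the collection of nonempty \emph{atoms} of the finite Boolean combination of the members of $\mathfrak{C}_{0}=\{A_{1},\dots,A_{n}\}$. For each sign vector $s\in\{0,1\}^{n}$ set
\[
B_{s}:=\bigcap_{i:\,s_{i}=1}A_{i}\ \cap\ \bigcap_{i:\,s_{i}=0}(\overline{A_{i}})^{c},
\]
and let $\mathfrak{B}$ consist of those $B_{s}$ that are nonempty and have $s\neq(0,\dots,0)$. Each such cell is a finite intersection of open sets, hence open, and is contained in at least one $A_{i}$, hence bounded; two cells with different sign vectors are disjoint, since they disagree on membership in some $A_{i}$ (one lies in $A_{i}$, the other in $(\overline{A_{i}})^{c}\subseteq A_{i}^{c}$). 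Thus $\mathfrak{B}$ is automatically a finite family of pairwise disjoint bounded open sets.

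First I would check that every $B_{s}\in\mathfrak{B}$ is a \emph{special} Caccioppoli set, i.e.\ that $m(\partial B_{s})=0$ and $\chi_{B_{s}}\in BV$. Since the boundary of a finite intersection of open sets is contained in the union of the boundaries of the factors, and since $\partial\big((\overline{A_{i}})^{c}\big)=\partial(\overline{A_{i}})\subseteq\partial A_{i}$, we get $\partial B_{s}\subseteq\bigcup_{i}\partial A_{i}$, which is null by hypothesis. For the $BV$ property I would use that $m(\partial A_{i})=0$ allows replacing $\chi_{\overline{A_{i}}}$ by $\chi_{A_{i}}$ almost everywhere, so that $\chi_{B_{s}}$ is, up to a null set, a finite Boolean combination of the functions $\chi_{A_{i}}\in BV$; the conclusion then follows from the standard fact that bounded sets of finite perimeter form an algebra under finite intersection, union and complement.

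Next comes the reconstruction property, which I expect to be the main obstacle. For fixed $j$ put $K_{A_{j}}=\{s:\,s_{j}=1,\ B_{s}\neq\emptyset\}$. A point of $A_{j}$ that avoids $\bigcup_{i}\partial A_{i}$ lies in exactly one cell, necessarily with $s_{j}=1$, while every cell with $s_{j}=1$ is contained in $A_{j}$; hence $\bigcup_{s\in K_{A_{j}}}B_{s}=A_{j}\setminus\bigcup_{i}\partial A_{i}$. As $\bigcup_{i}\partial A_{i}$ is a finite union of nowhere dense closed sets, it is nowhere dense, so this union is dense in $A_{j}$; taking closures (which commute with finite unions) gives $\bigcup_{s\in K_{A_{j}}}\overline{B_{s}}=\overline{A_{j}}$ and therefore $int\big(\bigcup_{s\in K_{A_{j}}}\overline{B_{s}}\big)=int(\overline{A_{j}})$. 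The delicate point is that this equals $A_{j}$ only when $A_{j}$ is regular open; since every set of the form $int(\bigcup\overline{E_{k}})$ is automatically regular open, this regularity is unavoidable, and I would dispose of it by working with the representatives $int(\overline{A_{i}})$, each of which is again a special Caccioppoli set differing from $A_{i}$ by a null set. Minimality of $\mathfrak{B}$ is then clear: the $B_{s}$ are precisely the atoms separating the patterns of membership in the $A_{i}$, so no two may be merged and none may be dropped without destroying the reconstruction of some $A_{j}$.

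Finally, to see that the generated family $\mathfrak{C}$ consists of special Caccioppoli sets, let $A=int\big(\bigcup_{k\in K'}\overline{E_{k}}\big)$ with $E_{k}\in\mathfrak{B}$. It is open by definition and bounded as a finite union of bounded sets; moreover $\partial A\subseteq\bigcup_{k}\partial E_{k}\subseteq\bigcup_{i}\partial A_{i}$, so $m(\partial A)=0$, and the same Boolean-algebra argument as above yields $\chi_{A}\in BV$. Hence $A\in\mathfrak{C}(\Omega)$, completing the plan.
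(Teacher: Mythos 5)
Your proposal is correct and, at its core, it is the same construction the paper uses: the paper also passes to the atoms of the finite family, setting $E_{x}=\bigcap\{A\in\mathfrak{C}_{0}\,|\,x\in A\}$ for $x\in\Omega$, observes that only finitely many distinct sets arise, and then dismisses the verification as easy. However, your write-up corrects two real defects of the paper's version, and both are worth recording. First, the paper's cells intersect only the sets \emph{containing} $x$ and omit your factors $(\overline{A_{i}})^{c}$ for the sets not containing $x$; as a result they are not pairwise disjoint (for two partially overlapping sets $A_{1},A_{2}$ the paper's family contains both $A_{1}$ and $A_{1}\cap A_{2}$), so the family as literally written violates the first axiom of a basis, and your complement-of-closure factors are exactly the needed repair. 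Second, the ``delicate point'' you isolate is a genuine defect of the lemma as stated, not of your argument: since the interior of a closed set $C$ satisfies $int(C)=int\bigl(\overline{int(C)}\bigr)$, every set of the form $int\bigl(\bigcup_{k\in K_{E}}\overline{E_{k}}\bigr)$ (finite union) is regular open, whereas a special Caccioppoli set in the paper's sense need not be: $B_{1}(0)\setminus\{0\}$ is open, bounded, of finite perimeter, with null boundary, yet no basis can reconstruct it exactly. So the lemma is false without tacitly assuming the members of $\mathfrak{C}_{0}$ are regular open, and the paper's proof silently ignores this. Your remedy --- replacing each $A_{i}$ by $int(\overline{A_{i}})$, which alters it only by a null set and hence changes neither $\theta_{A_{i}}$ nor any perimeter or pointwise integral used later --- is the right one for the purposes of the paper, but it should be flagged as an amendment to the definition of special Caccioppoli set (or to the lemma), since it proves the statement for a modified family rather than for $\mathfrak{C}_{0}$ itself. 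The one soft spot on your side is minimality of $\mathfrak{B}$, which you assert rather than prove; but the paper is equally silent on this, and its notion of ``smallest family'' is left unexamined, so this is a shared and comparatively minor issue.
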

\begin{proof}
For any $x\in\Omega$, we set 
\[
E_{x}=\bigcap\left\{ A\in\mathfrak{C}_{0}\:|\,x\in A\right\} .
\]
We claim that $\left\{ E_{x}\right\} _{x\in\Omega}$ is a basis. Since
$\mathfrak{C}_{0}$ is a finite family, then also $\left\{ E_{x}\right\} _{x\in\Omega}$
is a finite family and hence there is a finite set of indices $K$
such that $\mathfrak{B}=\left\{ E_{k}\right\} _{k\in K}$. Now it
is easy to prove that $\mathfrak{B}$ is a basis and it consists of
special Caccioppoli sets. Also the last statement is trivial.
\end{proof}
We set 
\[
\mathfrak{C}_{0,\lambda}(\Omega):=\lambda\cap\mathfrak{C}(\Omega),
\]
and we denote by $\mathfrak{B}_{\lambda}(\Omega)$ and $\mathfrak{C}_{\lambda}(\Omega)$
the relative basis and the generated family which exist by the previous
lemma.

\medskip{}
Now set
\begin{equation}
\mathfrak{C}_{\Lambda}(\Omega)=\lim_{\lambda\uparrow\Lambda}\mathfrak{C}_{\lambda}(\Omega),\ \mathfrak{B}_{\Lambda}(\Omega)=\lim_{\lambda\uparrow\Lambda}\mathfrak{B}_{\lambda}(\Omega).\label{eq:laura}
\end{equation}

\begin{lem}
\label{lem:Wee}The following properties hold true

\begin{itemize}
\item $\mathfrak{C}_{\Lambda}(\Omega)$ and $\mathfrak{B}_{\Lambda}(\Omega)$
are hyperfinite;
\item $\mathfrak{C}(\Omega)^{\sigma}\subset\mathfrak{C}_{\Lambda}(\Omega)\subset\mathfrak{C}(\Omega)^{*}$;
\item if $E\in\mathfrak{C}_{\Lambda}(\Omega)$, then
\[
\theta_{E}=\sum_{Q\in K(E)}\theta_{Q}(x),
\]
\textup{where $K(E)\subset\mathfrak{B}_{\Lambda}(\Omega)$ is a hyperfinite
set and $\theta_{Q}$ is the natural extension to} $\mathfrak{C}_{\Lambda}(\Omega)^{*}$\textup{
of the function $Q\mapsto\theta_{Q}$ defined on} $\mathfrak{C}_{\Lambda}(\Omega)$\textup{
by (\ref{eq:estate}).}
\end{itemize}
\end{lem}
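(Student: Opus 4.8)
The plan is to handle the three items in order, exploiting that $\mathfrak{C}_\Lambda(\Omega)$ and $\mathfrak{B}_\Lambda(\Omega)$ are $\Lambda$-limits of nets of honestly finite combinatorial data. For the first item I note that, for each $\lambda$, the family $\mathfrak{C}_{0,\lambda}(\Omega)=\lambda\cap\mathfrak{C}(\Omega)$ is finite (it is a subset of the finite set $\lambda$); hence by the previous lemma its basis $\mathfrak{B}_\lambda(\Omega)=\{E_k\}_{k\in K}$ is a finite family, and the generated family $\mathfrak{C}_\lambda(\Omega)$, whose members are the sets $int\left(\bigcup_{k\in K_E}\overline{E_k}\right)$ indexed by the subsets $K_E\subseteq K$, has at most $2^{|K|}$ elements and is therefore finite as well. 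Since by (\ref{eq:laura}) both $\mathfrak{C}_\Lambda(\Omega)$ and $\mathfrak{B}_\Lambda(\Omega)$ are the $\Lambda$-limits of these nets of finite sets, they are hyperfinite straight from the definition in Section \ref{HE}.

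For the second item the right inclusion is immediate: $\mathfrak{C}_\lambda(\Omega)\subseteq\mathfrak{C}(\Omega)$ for every $\lambda$ (the generated family consists of special Caccioppoli sets, again by the previous lemma), so taking $\Lambda$-limits gives $\mathfrak{C}_\Lambda(\Omega)\subseteq\lim_{\lambda\uparrow\Lambda}\mathfrak{C}(\Omega)=\mathfrak{C}(\Omega)^{*}$. For the left inclusion I fix $E\in\mathfrak{C}(\Omega)$ and use the fineness of the ultrafilter: the set $\{\lambda\mid E\in\lambda\}=Q(\{E\})$ is qualified, and on it $E\in\lambda\cap\mathfrak{C}(\Omega)=\mathfrak{C}_{0,\lambda}(\Omega)\subseteq\mathfrak{C}_\lambda(\Omega)$. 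Thus the net eventually equal to $E$ is admissible in the definition of $\mathfrak{C}_\Lambda(\Omega)$, and its $\Lambda$-limit is the constant-net limit $E^{*}$; hence $E^{*}\in\mathfrak{C}_\Lambda(\Omega)$, that is $\mathfrak{C}(\Omega)^{\sigma}\subseteq\mathfrak{C}_\Lambda(\Omega)$.

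The third item carries the real content, and I would first establish its purely finite analogue: for each $\lambda$ and each $E\in\mathfrak{C}_\lambda(\Omega)$, writing $E=int\left(\bigcup_{k\in K_E}\overline{E_k}\right)$ with the $E_k\in\mathfrak{B}_\lambda(\Omega)$ pairwise disjoint, one has the pointwise identity $\theta_E=\sum_{k\in K_E}\theta_{E_k}$. The point is that $\bigcup_{k}E_k\subseteq E$ and $E\setminus\bigcup_k E_k\subseteq\bigcup_k\partial E_k$, a Lebesgue-null set because each $E_k$ is a special Caccioppoli set; consequently, for every $x$ and every infinitesimal $\eta$, the null difference contributes zero measure and disjointness gives $m\left(B_\eta(x)\cap E^{*}\right)=\sum_{k\in K_E} m\left(B_\eta(x)\cap E_k^{*}\right)$ exactly, so dividing by $m\left(B_\eta(x)\cap\overline{\Omega}^{*}\right)$ and taking the standard part of this finite sum yields the identity through (\ref{eq:estate}). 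I would then encode the assignment $E\mapsto K_E$ together with the density map $Q\mapsto\theta_Q$ as a net indexed by $\lambda$ and pass to the $\Lambda$-limit: $E\mapsto K(E)$ becomes an internal map into the hyperfinite subsets of $\mathfrak{B}_\Lambda(\Omega)$, the map $Q\mapsto\theta_Q$ becomes its natural extension, and, using that a hyperfinite sum is by definition the $\Lambda$-limit of finite sums (Section \ref{HE}), the finite identity transfers to $\theta_E=\sum_{Q\in K(E)}\theta_Q$ for every internal $E\in\mathfrak{C}_\Lambda(\Omega)$.

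I expect the main obstacle to be the finite decomposition rather than the transfer: one must verify the additivity $m\left(B_\eta(x)\cap E^{*}\right)=\sum_k m\left(B_\eta(x)\cap E_k^{*}\right)$ as an honest pointwise equality valid even for $x$ lying on the interfaces $\bigcup_k\partial E_k$, which is exactly where the hypothesis $m(\partial E_k)=0$ built into $\mathfrak{C}(\Omega)$ must be combined with finite additivity of Lebesgue measure on the disjoint blocks. Once this finite, transferable identity is in hand, the remaining step is a routine application of the $\Lambda$-limit machinery, with $\theta_Q$ for the possibly nonstandard basis blocks $Q\in\mathfrak{B}_\Lambda(\Omega)$ understood as the value at $Q$ of the natural extension of the standard density map $E\mapsto\theta_E$.
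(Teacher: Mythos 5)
Your proposal is correct and takes essentially the same approach as the paper: the paper's entire proof is the single sentence that the lemma ``follows trivially by the construction,'' and your argument is exactly the unpacking of that construction (finiteness of $\mathfrak{B}_{\lambda}(\Omega)$ and $\mathfrak{C}_{\lambda}(\Omega)$ at each level $\lambda$, fineness of the ultrafilter for the inclusion $\mathfrak{C}(\Omega)^{\sigma}\subset\mathfrak{C}_{\Lambda}(\Omega)$, and the disjointness-plus-null-boundary argument giving the finite identity $\theta_{E}=\sum_{k\in K_{E}}\theta_{E_{k}}$, all passed to the $\Lambda$-limit). Your write-up simply supplies the details the authors deemed trivial, with no deviation in method.
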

\begin{proof}
It follows trivially by the construction.
\end{proof}
The next lemma is a basic step for the construction of the space $V_{\Lambda}(\Omega)$.
\begin{lem}
\label{thm:dodici}For any\emph{ }$Q\in\mathfrak{B}_{\Lambda}(\Omega)$
there exists a set \textup{\emph{$\Xi(Q)\subset\overline{Q}\cap\Omega,$}}
and a family of functions\textup{ $\left\{ \zeta_{a}\right\} _{a\in\Xi(Q)}$,}\textup{\emph{
such that,}}
\begin{enumerate}
\item \textup{$\Xi:=\bigcup\left\{ \Xi(Q)\ |\ Q\in\mathfrak{B}_{\Lambda}(\Omega)\right\} $
}\textup{\emph{is a hyperfinite set, and }}$\Omega\subset\Xi\subset\Omega^{*}$;
\item if $Q,R\in\mathfrak{B}_{\Lambda}(\Omega)$ and $Q\neq R$, then \textup{$\Xi(Q)\cap\Xi(R)=\emptyset$;}
\item if $a\in\Xi(Q)$, the\textup{\emph{n, $\exists f_{a}\in\mathscr{C}^{1}(\Omega)^{*}$}}\textup{
such that $\zeta_{a}=f_{a}\cdot\theta_{Q};$}
\item \textup{\emph{for any $a,b\in\Xi$, $a\neq b$ implies $\mathfrak{supp}(\zeta_{a})\cap\mathfrak{supp}(\zeta_{b})=\emptyset;$}}
\item \emph{$\zeta_{a}\geq0$;}
\item \textup{\emph{for any $a\in\Xi$,}}
\begin{equation}
\zeta_{a}(a)=1.\label{eq:alla}
\end{equation}
\end{enumerate}
\end{lem}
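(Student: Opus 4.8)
The plan is to perform the whole construction classically at each finite stage $\lambda$ and then to read off the statement by taking the $\Lambda$-limit. By (\ref{eq:laura}) we have $\mathfrak{B}_{\Lambda}(\Omega)=\lim_{\lambda\uparrow\Lambda}\mathfrak{B}_{\lambda}(\Omega)$, where each $\mathfrak{B}_{\lambda}(\Omega)$ is a \emph{finite} family of pairwise disjoint open special Caccioppoli sets. So I would attach to every $Q\in\mathfrak{B}_{\lambda}(\Omega)$ a finite set $\Xi_{\lambda}(Q)\subset\overline{Q}\cap\Omega$ and classical functions $\{\zeta^{\lambda}_{a}\}_{a\in\Xi_{\lambda}(Q)}$, check the six properties for this finite data, and then set $\Xi(Q):=\lim_{\lambda\uparrow\Lambda}\Xi_{\lambda}(Q_{\lambda})$ (with $Q=\lim_{\lambda\uparrow\Lambda}Q_{\lambda}$), $\zeta_{a}:=\lim_{\lambda\uparrow\Lambda}\zeta^{\lambda}_{a}$ and $f_{a}:=\lim_{\lambda\uparrow\Lambda}f^{\lambda}_{a}$. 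Hyperfiniteness in item (1) is then automatic, being a $\Lambda$-limit of finite sets, and $f_{a}\in\mathscr{C}^{1}(\Omega)^{*}$ is likewise automatic.

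For the finite stage, observe that $\lambda\cap\Omega$ is finite, so I can enumerate it and assign to each $x\in\lambda\cap\Omega$ a \emph{single} basis element $Q(x)\in\mathfrak{B}_{\lambda}(\Omega)$ with $x\in\overline{Q(x)}$, defining $\Xi_{\lambda}(Q):=\{x\in\lambda\cap\Omega : Q(x)=Q\}$. Choosing one $Q(x)$ per point makes the $\Xi_{\lambda}(Q)$ pairwise disjoint, which is item (2), and it gives $\lambda\cap\Omega\subseteq\Xi_{\lambda}:=\bigcup_{Q}\Xi_{\lambda}(Q)\subseteq\Omega$; since every standard $x\in\Omega$ lies in the closure of some basis element for all large $\lambda$, in the limit this yields $\Omega\subset\Xi\subset\Omega^{*}$, i.e. item (1). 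The one restriction on the choice is that I must select a $Q$ with positive Lebesgue density, $\theta_{Q}(x)>0$, which is what will make the normalization in item (6) attainable.

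To see that such a $Q$ always exists, note that if $x$ is interior to a basis element then $\theta_{Q}(x)=1$, while if $x\in\partial Q\cap\Omega$ one uses that the densities of the finitely many basis elements touching $x$ sum to $1$ at $x$ --- because $\bigcup_{Q}Q$ exhausts a neighborhood of $x$ up to the measure-zero set $\bigcup_{Q}\partial Q$, recalling $m(\partial Q)=0$ --- so at least one summand is strictly positive. Having fixed the assignment, I list the finitely many distinct points of $\Xi_{\lambda}$, pick pairwise disjoint open balls $U_{a}$ centered at each $a$, and take a nonnegative $f^{\lambda}_{a}\in\mathscr{C}^{1}(\mathbb{R}^{N})$ supported in $\overline{U_{a}}$ with $f^{\lambda}_{a}(a)=1/\theta_{Q}(a)$. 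Setting $\zeta^{\lambda}_{a}:=f^{\lambda}_{a}\,\theta_{Q}$ gives the factorization (3) and the nonnegativity (5); the global disjointness of the balls $U_{a}$ forces the supports $\mathfrak{supp}(\zeta^{\lambda}_{a})\subset\overline{U_{a}}$ to be pairwise disjoint, which is (4); and $\zeta^{\lambda}_{a}(a)=f^{\lambda}_{a}(a)\,\theta_{Q}(a)=1$ is (6). If the monad-support condition of Definition \ref{def:scorre} is wanted later, one also shrinks the radius of $U_{a}$ to zero along $\lambda$, but this is not needed for the present statement.

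The step I expect to be the genuine obstacle is precisely this density bookkeeping at boundary points: establishing that $\sum_{Q}\theta_{Q}(x)=1$ on $\Omega$, so that a positive-density basis element is always available and $f_{a}(a)=1/\theta_{Q}(a)$ is legitimate. Here I would lean on Lemma \ref{lem:Wee} and the definition (\ref{eq:estate}) of the density. Once the finite construction is pinned down with this property, passing to the limit is routine: each of items (2)--(6) is an internal statement holding for every $\lambda$, hence holds for the limiting hyperfinite family by the definition of the $\Lambda$-limit, and item (1) follows from $\lambda\cap\Omega\subseteq\Xi_{\lambda}\subseteq\Omega$ as noted above.
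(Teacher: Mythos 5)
Your construction is essentially the paper's own: there, $r(\lambda)$ is taken to be one third of the minimal distance between points of $\lambda\cap\Omega$ (playing the role of your disjoint balls $U_{a}$), $\Xi$ is defined as the set of $\Lambda$-limits of points $a_{\lambda}\in\lambda\cap\Omega$, each point is assigned to a single cell (interior points to their own $Q$, points of $\partial Q_{1}\cap\dots\cap\partial Q_{l}$ to one chosen $Q_{j}$), and $\zeta_{a}(x)=\rho_{a}(x)\theta_{Q}(x)/\bigl(\rho_{a}(a)\theta_{Q}(a)\bigr)$, which is exactly your $f_{a}\cdot\theta_{Q}$ with the normalization $1/\theta_{Q}(a)$ folded into the smooth factor. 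The one point where you are more careful than the paper is precisely the step you flag as the genuine obstacle: the paper divides by $\theta_{Q}(a)$ after an \emph{arbitrary} assignment of boundary points to adjacent cells, without verifying $\theta_{Q}(a)>0$, whereas your requirement that the assigned cell have positive density at $a$ (justified because the densities of the finitely many cells meeting at $a$ sum to $1$, the boundaries being Lebesgue-null) is exactly what is needed to make that division legitimate.
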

\begin{proof}
We set
\begin{equation}
r(\lambda)=\frac{1}{3}min\left\{ d\left(x,y\right)\ |\ x,y\in\lambda\cap\Omega\right\} ,\label{eq:pinta-2}
\end{equation}
and we denote by $\rho$ a smooth bell shaped function having support
in $B_{1}(0)$; then the functions $\rho\left(\frac{x-a_{\lambda}}{r(\lambda)}\right),\,a_{\lambda}\in\lambda\cap\Omega$
have disjoint support. We set
\[
\Xi:=\left\{ \lim_{\lambda\uparrow\Lambda}a_{\lambda}\,|\ a_{\lambda}\in\lambda\cap\Omega\right\} ,
\]
so that $\Omega\subset\Xi\subset\Omega^{*}$ and we divide all points
$a\in\Xi,$ among sets $\Xi(Q)$, $Q\in\mathfrak{B}_{\Lambda}$, in
such a way that

- if $a\in Q$ then $a\in\Xi(Q)$;

- if $a\in\partial Q_{1}\cap...\cap\partial Q_{l}$ there exists a
unique $Q_{j}$ ($j\leq l$) such that $a\in\Xi(Q_{j})$.

With this construction, claims 1. and 2. are trivially satisfied.
Now, for any $a\in\Xi(Q)$, set 
\[
\rho_{a}(x):=\lim_{\lambda\uparrow\Lambda}\rho\left(\frac{x-a_{\lambda}}{r(\lambda)}\right),
\]
and
\begin{equation}
\zeta_{a}(x):=\frac{\rho_{a}(x)\theta_{Q}(x)}{\rho_{a}(a)\theta_{Q}(a)}.\label{eq:grillo}
\end{equation}
It is easy to check that the functions $\zeta_{a}$ satisfy 3.,4.,5.
and 6. 
\end{proof}
We set
\begin{equation}
V_{\Lambda}^{1}(\Omega)=span\left(\left\{ \zeta_{a}\right\} _{a\in\Xi}\right)+\lim_{\lambda\uparrow\Lambda}\left(\lambda\cap\mathscr{C}^{1}(\overline{\Omega})\right),\,\label{eq:pinta}
\end{equation}
and
\[
V_{\Lambda}^{1}(Q)=\left\{ u\theta_{Q}\ |\ u\in V_{\Lambda}^{1}(\Omega)\right\} ;
\]
so we have that, for any $a\in\Xi(Q)$, $\zeta_{a}\in V_{\Lambda}^{1}(Q)$.
Also, we set

\begin{equation}
V_{\Lambda}^{0}(\Omega)=Span\left(\left\{ f,\,\partial_{i}f,\,fg,\ g\partial_{i}f\,\,|\,\,f,g\in V_{\Lambda}^{1}(\Omega),\,\,i=1,...,N\right\} +\lim_{\lambda\uparrow\Lambda}\left(\lambda\cap\mathscr{C}(\overline{\Omega})\right)\right)\label{eq:pinta-0}
\end{equation}
and
\[
V_{\Lambda}^{0}(Q)=\left\{ u\theta_{Q}\ |\ u\in V_{\Lambda}^{0}(\Omega)\right\} .
\]

\textit{\emph{Finally, we can define }}the $V_{\Lambda}(\Omega)$
as follows:
\begin{equation}
V_{\Lambda}(\Omega)=\underset{Q\in\mathfrak{B}_{\Lambda}(\Omega)}{\bigoplus}V_{\Lambda}^{0}(Q).\label{eq:fs}
\end{equation}
Namely, if $u\in V_{\Lambda}(\Omega)$, then
\begin{equation}
u(x)=\sum_{Q\in\mathfrak{B}_{\Lambda}(\Omega)}u_{Q}(x)\theta_{Q}(x),\label{eq:splitty}
\end{equation}
with $u_{Q}\in V_{\Lambda}^{0}(\Omega)$.

\subsection{The $\sigma$-basis }

In this section, we will introduce a $\sigma$-basis in such a way
that $V_{\Lambda}(\Omega)$ becomes a Caccioppoli space of ultrafunctions,
according to Definition \ref{def:scorre}.
\begin{thm}
\label{papa} There exists a $\sigma$-basis\emph{ }\textup{\emph{for
}}\emph{$V_{\Lambda}(\Omega)$, $\left\{ \sigma_{a}(x)\right\} _{a\in\Gamma}$,}
such that

\begin{enumerate}
\item $\Omega\subset\Gamma\subset\Omega^{*};$
\item $\Gamma=\bigcup_{_{Q\in\mathfrak{B}_{\Lambda}(\Omega)}}Q_{\Gamma},$
where $Q\cap\Gamma\subset Q_{\Gamma}\subset\overline{Q}\cap\Gamma$,
and $Q_{\Gamma}\cap R_{\Gamma}=\emptyset\,\,for\,\,Q\neq R$; 
\item $\left\{ \sigma_{a}(x)\right\} _{a\in Q_{\Gamma}}$ is a\emph{ }$\sigma$-basis\emph{
}\textup{\emph{for }}\emph{$V_{\Lambda}^{0}(Q)$.}
\end{enumerate}
\end{thm}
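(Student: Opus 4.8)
The strategy is to build the basis cell by cell and then glue, using the internal direct sum $V_{\Lambda}(\Omega)=\bigoplus_{Q\in\mathfrak{B}_{\Lambda}(\Omega)}V_{\Lambda}^{0}(Q)$ of (\ref{eq:fs}). Since $\mathfrak{B}_{\Lambda}(\Omega)$ is hyperfinite (Lemma \ref{lem:Wee}) and each $V_{\Lambda}^{0}(Q)$ is a hyperfinite-dimensional internal space of internal functions, every step below is a transfer of an elementary finite-dimensional statement applied internally over the hyperfinite index family; equivalently, one performs the construction at each finite stage $\lambda$ for the finite family $\mathfrak{B}_{\lambda}(\Omega)$ and passes to the $\Lambda$-limit. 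Thus it suffices to (a) produce a local $\sigma$-basis on each block $V_{\Lambda}^{0}(Q)$ with nodes in $\overline{Q}$, and (b) check that the union of the local data is a $\sigma$-basis for the whole space.

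For the local step, fix $Q$ and recall that for any finite-dimensional space of genuine functions the point evaluations span the dual space, the only element vanishing at every point being $0$. Hence I can select $d=\dim V_{\Lambda}^{0}(Q)$ points whose evaluation functionals form a basis of $\left(V_{\Lambda}^{0}(Q)\right)'$; the corresponding dual basis $\{\sigma_{a}\}$ then satisfies $\sigma_{a}(b)=\delta_{ab}$ and $v=\sum_{a}v(a)\sigma_{a}$ for all $v\in V_{\Lambda}^{0}(Q)$, which is exactly property 3. To force the prescribed points into the node set I start from $\Xi(Q)$: by properties 4 and 6 of Lemma \ref{thm:dodici} the functions $\zeta_{a}$ form a biorthogonal system, $\zeta_{a}(b)=\delta_{ab}$ on $\Xi(Q)$, so the evaluations at $\Xi(Q)$ are independent and extend to a basis of evaluations by the exchange lemma. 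Because $\Omega\subset\Xi=\bigcup_{Q}\Xi(Q)$, including $\Xi(Q)$ in every block will deliver $\Omega\subset\Gamma$, i.e. property 1.

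To glue, I set $\Gamma=\bigcup_{Q}Q_{\Gamma}$ and let $\sigma_{a}$ be the local function attached to the unique cell whose node set contains $a$, assigning each node to a single cell exactly as in Lemma \ref{thm:dodici}, so that the $Q_{\Gamma}$ are disjoint and satisfy $Q\cap\Gamma\subset Q_{\Gamma}\subset\overline{Q}\cap\Gamma$ (property 2). The global reconstruction then reduces to a single point: writing $u=\sum_{Q}v_{Q}$ with $v_{Q}\in V_{\Lambda}^{0}(Q)$ as in (\ref{eq:splitty}), I need $u(a)=v_{Q}(a)$ for every $a\in Q_{\Gamma}$, that is $v_{R}(a)=0$ for all $R\neq Q$. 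Granting this, $\sum_{a\in\Gamma}u(a)\sigma_{a}=\sum_{Q}\sum_{a\in Q_{\Gamma}}v_{Q}(a)\sigma_{a}=\sum_{Q}v_{Q}=u$, and the cross-cell Kronecker relations $\sigma_{a}(b)=\delta_{ab}$ follow automatically from linear independence.

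The main obstacle is precisely this cross-block vanishing, and it is what dictates placing the nodes in the open cells. If $a$ lies in the open set $Q$, then $a\notin\overline{R}$ for every $R\neq Q$, since disjoint open sets satisfy $\overline{R}\cap Q=\emptyset$ (a statement that transfers to the internal cells); as each $v_{R}\in V_{\Lambda}^{0}(R)$ is supported in $\overline{R}$, this forces $v_{R}(a)=0$ regardless of how small the cells are. Two facts must therefore be secured. First, the points one is obliged to include do lie in cell interiors: at each finite stage the cell of a point $x$ is the open set $\bigcap\{A\in\mathfrak{C}_{0,\lambda}\mid x\in A\}$, so $x$ belongs to its own open cell, and this passes to the $\Lambda$-limit; the same applies to the points of $\Xi$. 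Second, and this is the delicate part, one must rule out nonzero elements of $V_{\Lambda}^{0}(Q)$ supported on $\partial Q$ (which can arise from products $\zeta_{a}\theta_{Q}$ living on a shared boundary $\partial Q\cap\partial R$), so that evaluations at interior nodes already reconstruct every element of the block and no boundary node is needed to complete the dual basis. Controlling these boundary-supported contributions—via the disjoint-support normalization of Lemma \ref{thm:dodici} and the measure-zero nature of $\partial Q$—is the step I expect to require the most care; once it is in place, the hyperfinite assembly yields the desired $\sigma$-basis.
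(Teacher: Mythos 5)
Your overall skeleton is the paper's own---work cell by cell through the decomposition (\ref{eq:fs}), start from the biorthogonal family $\{\zeta_{a}\}_{a\in\Xi(Q)}$ of Lemma \ref{thm:dodici}, complete it to a basis of each block $V_{\Lambda}^{0}(Q)$, and glue over the hyperfinite family $\mathfrak{B}_{\Lambda}(\Omega)$---but the step you yourself identify as the crux fails. Your cross-block vanishing rests on the claim that every node you are obliged to include lies in an \emph{open} cell, justified by ``the cell of $x$ is $\bigcap\{A\in\mathfrak{C}_{0,\lambda}\mid x\in A\}$, so $x$ belongs to its own open cell.'' That set is not a cell of $\mathfrak{B}_{\lambda}(\Omega)$: the cells are the atoms of the partition, refined also by the boundaries of the sets of $\mathfrak{C}_{0,\lambda}$ which do \emph{not} contain $x$, and a point lying on such a boundary belongs to no open cell. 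Concretely, fix a standard ball $B$ with $\overline{B}\subset\Omega$ and a point $x\in\partial B$; since the ultrafilter is fine, eventually $B\in\lambda$, and then (hence also in the $\Lambda$-limit) $x$ lies on the common boundary of two cells and inside none. Such $x$ belongs to $\Omega$, so it must lie in $\Gamma$ (property 1) and in $\Xi$ (Lemma \ref{thm:dodici}, item 1): boundary nodes are unavoidable. This is exactly why the theorem asserts only $Q_{\Gamma}\subset\overline{Q}\cap\Gamma$ rather than $Q_{\Gamma}\subset Q\cap\Gamma$, why Lemma \ref{thm:dodici} allows $\Xi(Q)\subset\overline{Q}\cap\Omega$ and assigns a point of $\partial Q_{1}\cap\dots\cap\partial Q_{l}$ to a unique adjacent cell, and why the normalization (\ref{eq:grillo}) divides by $\theta_{Q}(a)$, which differs from $1$ precisely when $a\in\partial Q$. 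At such a node $a\in\partial Q\cap\partial R$ your separation argument gives nothing: $\theta_{R}(a)$ is in general nonzero (it equals $\tfrac{1}{2}$ at a smooth interface point), so an element $u_{R}\theta_{R}\in V_{\Lambda}^{0}(R)$ need not vanish at $a$, and the identity $u(a)=v_{Q}(a)$ on which your reconstruction relies breaks down.

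Your fallback---ruling out nonzero elements of $V_{\Lambda}^{0}(Q)$ supported on $\partial Q$, so that interior evaluations already span the dual of the block---is not merely delicate but false. If a standard point $b\in\partial Q\cap\partial R$ is assigned to $\Xi(R)$, then $\zeta_{b}\in V_{\Lambda}^{1}(\Omega)\subset V_{\Lambda}^{0}(\Omega)$ vanishes identically on $Q$ (its factor $\theta_{R}$ vanishes off $\overline{R}$ and $Q\cap\overline{R}=\emptyset$), hence $\zeta_{b}\theta_{Q}\in V_{\Lambda}^{0}(Q)$ vanishes on all of $Q$, yet its value at $b$ is $\zeta_{b}(b)\theta_{Q}(b)=\theta_{Q}(b)\neq0$ at a smooth interface point. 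Every evaluation at a point of the open cell annihilates this nonzero ultrafunction, so point evaluations at interior nodes cannot span $\left(V_{\Lambda}^{0}(Q)\right)'$, and no choice of interior nodes can deliver property 3. Thus your approach is blocked both ways: interior-only nodes cannot reconstruct even a single block, while boundary nodes destroy your gluing argument. This is the structural reason the paper does not use point evaluations at all in the local step: it builds explicit $\delta$-like functions $\mathfrak{d}_{a}$ (the normalized $\zeta_{a}$ for $a\in\Xi(Q)$, and combinations $\sum_{k}e_{k}(a)e_{k}(x)$ from an orthonormal basis of the complement $Z(Q)^{\perp}$) and takes the dual basis with respect to the pairing $\int^{*}uv\,dx$ of (\ref{eq:pippo}), for which distinct blocks are orthogonal because distinct cells overlap only in a set of measure zero, while the boundary nodes, normalized through $\theta_{Q}(a)$, remain in $Q_{\Gamma}$. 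Any repair of your argument must incorporate the boundary nodes in this fashion rather than legislate them away.
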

\begin{proof}
First we introduce in\emph{ $V_{\Lambda}(\Omega)$} the following
scalar product:
\begin{equation}
\left\langle u,v\right\rangle =\int^{*}uv\,dx.\label{eq:pippo}
\end{equation}

For any $Q\in\mathfrak{B}_{\Lambda}(\Omega)$ we set
\[
Z(Q)=\left\{ \sum_{a\in\Xi(Q)}\gamma_{a}\zeta_{a}(x)\ |\ \gamma_{a}\in\mathbb{R}^{*}\right\} ,
\]
where $\Xi(Q)$ and the functions$\left\{ \zeta_{a}\right\} _{a\in\Xi}$
are defined in Lemma \ref{thm:dodici}.

If we set 
\[
\mathfrak{d}_{a}(x)=\frac{\zeta_{a}(x)}{\int^{*}|\zeta_{a}(x)|^{2}\,dx}
\]
we have that
\begin{equation}
\left\{ \mathfrak{d}_{a}(x)\right\} _{a\in\Xi(Q)}\label{eq:belinda}
\end{equation}
is a $\delta$-basis for $Z(Q)\subset V_{\Lambda}^{0}(Q)$ (with respect
to the scalar product (\ref{eq:pippo})). In fact, if $u\in Z(Q)$,
then $u(x)=\sum_{b\in\Xi(Q)}u(b)\zeta_{b}(x)$, and hence, by Lemma
\ref{thm:dodici}, it follows that
\begin{align*}
\int^{*}u(x)\mathfrak{d}_{a}(x)\,dx & =\int^{*}\sum_{b\in\Xi(Q)}u(b)\zeta_{b}(x)\mathfrak{d}_{a}(x)\,dx\\
 & =\sum_{b\in\Xi(Q)}u(b)\int^{*}\zeta_{b}(x)\mathfrak{d}_{a}(x)\,dx\\
 & =\sum_{b\in\Xi(Q)}u(b)\int^{*}\zeta_{b}(x)\left(\frac{\zeta_{a}(x)}{\int^{*}|\zeta_{a}(x)|^{2}\,dx}\,\right)dx\\
 & =\sum_{b\in\Xi(Q)}u(b)\delta_{ab}=u(a).
\end{align*}

Next, we want to complete this basis and to get a $\delta$-basis
for $V_{\Lambda}^{0}(Q)$. To this aim, we take an orthonormal basis
$\left\{ e_{k}(x)\right\} $ of $Z(Q)^{\perp}$ where $Z(Q)^{\perp}$
is the orthogonal complement of $Z(Q)$ in $V_{\Lambda}^{0}(Q)$ (with
respect to the scalar product \eqref{eq:pippo}). For every $a\in Q\backslash\Xi$,
set
\[
\mathfrak{d}_{a}(x)=\sum_{k}e_{k}(a)e_{k}(x);
\]
notice that this definition is not in contradiction with (\ref{eq:grillo})
since in the latter $a\in\Xi$.

For every $v\in Z(Q)^{\perp}$, we have that
\[
\int^{*}v(x)\mathfrak{d}_{a}(x)\,dx=v(a);
\]
in fact
\begin{align*}
\int^{*}v(x)\mathfrak{d}_{a}(x)\,dx & =\int^{*}\left(\sum_{k}v_{k}e_{k}(x)\right)\left(\sum_{h}e_{h}(a)e_{h}(x)\right)\,dx=\sum_{k,h}v_{k}e_{h}(a)\int^{*}e_{k}(x)e_{h}(x)\,dx\\
 & =\sum_{k,h}v_{k}e_{h}(a)\delta_{hk}=\sum_{k}v_{k}e_{k}(a)=v(a).
\end{align*}
It is not difficult to realize that $\left\{ \mathfrak{d}_{a}(x)\right\} _{a\in Q\backslash\Xi}$
generates all $Z(Q)^{\perp}$ and hence we can select a set $\Xi^{\star}(Q)\subset Q\backslash\Xi$
such that $\left\{ \mathfrak{d}_{a}(x)\right\} _{a\in\Xi^{\star}(Q)}$
is a basis for $Z(Q)^{\perp}$. Taking 
\[
Q_{\Gamma}=\Xi^{\star}(Q)\cup\Xi(Q),
\]
we have that $\left\{ \mathfrak{d}_{a}(x)\right\} _{a\in Q_{\Gamma}}$
is a basis for \emph{$V_{\Lambda}^{0}(Q)$.}

Now let $\left\{ \sigma_{a}(x)\right\} _{a\in Q_{\Gamma}}$ denote
the dual basis of $\left\{ \mathfrak{d}_{a}(x)\right\} _{a\in Q_{\Gamma}}$
namely a basis such that, $\forall a,b\in Q_{\Gamma},$
\[
\int^{*}\sigma_{a}(x)\mathfrak{d}_{b}(x)dx=\delta_{ab}.
\]

Clearly it is a $\sigma$-basis for $V_{\Lambda}^{0}(Q)$. In fact,
if $u\in V_{\Lambda}^{0}(Q)$, we have that
\[
u(x)=\sum_{a\in Q_{\Gamma}}\left[\int^{*}u(t)\mathfrak{d}_{a}(t)\,dt\right]\sigma_{a}(x)=\sum_{a\in Q_{\Gamma}}u(a)\sigma_{a}(x).
\]
Notice that if $a\in\Xi(Q)$, then $\sigma_{a}(x)=\zeta_{a}(x)$.
The conclusion follows taking 
\[
\Gamma:=\bigcup_{Q\in\mathfrak{B}_{\Lambda}(\Omega)}Q_{\Gamma}.
\]
\end{proof}
By the above theorem, the following corollary follows straightforward.
\begin{cor}
\label{def:scorre-1}\textup{ $V_{\Lambda}(\Omega)$ is a} \emph{Caccioppoli}
\emph{space of ultrafunctions}\textup{ in the sense of Definition
\ref{def:scorre}.}
\end{cor}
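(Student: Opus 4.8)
The plan is to verify directly that the space $V_{\Lambda}(\Omega)$ defined in (\ref{eq:fs}) meets the two requirements of Definition \ref{def:scorre}: that it is modeled on $V(\Omega)$ in the sense of Definition \ref{def:ultraf-1}, and that the $\sigma$-basis produced by Theorem \ref{papa} has each $\sigma_{a}$ supported in $\mathfrak{mon}(a)$. Nothing new has to be built; everything should be read off from the construction of Section \ref{sec:FSU} together with Theorem \ref{papa}.

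For the first requirement I would observe that, performing the constructions (\ref{eq:pinta}), (\ref{eq:pinta-0}), (\ref{eq:fs}) at each finite level $\lambda$, the space $V_{\Lambda}(\Omega)$ is the $\Lambda$-limit of an increasing net of finite dimensional spaces $V_{\lambda}(\Omega)$, and then check the two clauses of Definition \ref{def:ultraf-1}. The inclusion $V_{\lambda}(\Omega)\subset V(\Omega)$ reduces to noting that every generator is a Caccioppoli function: by (\ref{eq:grillo}) each $\zeta_{a}$ is of the form $(\text{smooth})\cdot\theta_{Q}$ with $Q$ a special Caccioppoli set by Lemma \ref{lem:Wee}, while the pointwise products and classical derivatives occurring in (\ref{eq:pinta-0}) again have the form $\sum_{k}f_{k}\theta_{E_{k}}$ with $f_{k}$ continuous (using that $\theta_{E}\theta_{E'}=\theta_{E\cap E'}$ and $E\cap E'\in\mathfrak{C}(\Omega)$). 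The spanning condition $V_{\lambda}(\Omega)\supseteq\mathrm{Span}(V(\Omega)\cap\lambda)$ follows from Lemma \ref{lem:Wee}: any $g=\sum_{k}f_{k}\theta_{E_{k}}\in V(\Omega)\cap\lambda$ can be rewritten, after decomposing each $\theta_{E_{k}}=\sum_{Q\in K(E_{k})}\theta_{Q}$ into basis cells, as $\sum_{Q}g_{Q}\theta_{Q}$ with $g_{Q}$ continuous, so that $g\in\bigoplus_{Q}V_{\lambda}^{0}(Q)$.

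For the second requirement I would take the $\sigma$-basis $\{\sigma_{a}\}_{a\in\Gamma}$ of Theorem \ref{papa}, which already satisfies $\Omega\subset\Gamma\subset\Omega^{*}$. Property 3 of that theorem gives $\sigma_{a}\in V_{\Lambda}^{0}(Q)$ whenever $a\in Q_{\Gamma}$, and since every element of $V_{\Lambda}^{0}(Q)$ has the form $u\theta_{Q}$, its support is contained in $\overline{Q}$; moreover $a\in Q_{\Gamma}\subset\overline{Q}$. Hence it suffices to prove that each cell $Q\in\mathfrak{B}_{\Lambda}(\Omega)$ has infinitesimal diameter, for then $\overline{Q}\subset\mathfrak{mon}(a)$ and the support condition follows immediately. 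For the indices $a\in\Xi(Q)$ this is already transparent, since by (\ref{eq:grillo}) and (\ref{eq:pinta-2}) the function $\sigma_{a}=\zeta_{a}$ is supported in a ball of infinitesimal radius $r(\lambda)$ about $a$; the real content lies in the complementary indices $a\in Q_{\Gamma}\setminus\Xi$, coming from the orthogonal complement $Z(Q)^{\perp}$, whose dual elements are only a priori supported in $\overline{Q}$.

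The main obstacle is therefore exactly this diameter estimate, which I would establish by transfer from a uniform finite-level statement. Fix a standard $\varepsilon>0$; since $\overline{\Omega}$ is compact, cover it by finitely many open balls $B_{1},\dots,B_{m}$ of radius $\varepsilon/2$ with rational centers, each of which is a special Caccioppoli set and hence, by the fineness of $\mathcal{U}$, eventually belongs to $\lambda\cap\mathfrak{C}(\Omega)=\mathfrak{C}_{0,\lambda}(\Omega)$. On the qualified set where all the $B_{j}$ are present, the definition of the basis cells (each cell being of the form $E_{x}=\bigcap\{A\in\mathfrak{C}_{0,\lambda}(\Omega):x\in A\}$) forces every basis cell to be contained in some $B_{j}$, hence to have diameter $<\varepsilon$. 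Passing to the $\Lambda$-limit, every $Q\in\mathfrak{B}_{\Lambda}(\Omega)$ has diameter $<\varepsilon$ for each standard $\varepsilon$, i.e.\ infinitesimal diameter, which closes the argument and yields Definition \ref{def:scorre}(ii).
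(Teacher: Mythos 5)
Your treatment of clause (ii) of Definition \ref{def:scorre} is correct, and it is the real content of this corollary: the paper's own proof is a one-line deferral to Theorem \ref{papa}, which by itself only gives $\sigma_{a}\in V_{\Lambda}^{0}(Q)$, hence support contained in $\overline{Q}$, for $a\in Q_{\Gamma}$. The missing link is exactly the one you isolate, namely that every cell $Q\in\mathfrak{B}_{\Lambda}(\Omega)$ has infinitesimal diameter, and your proof of it is sound: a finite cover of $\overline{\Omega}$ by balls of radius $\varepsilon/2$ consists of special Caccioppoli sets, these lie in $\mathfrak{C}_{0,\lambda}(\Omega)$ for all $\lambda$ in a qualified set by fineness of $\mathcal{U}$, each cell $E_{x}$ is then contained in some ball, and passing to the $\Lambda$-limit and letting $\varepsilon$ run over the standard positive reals gives the claim. (This fact is also used tacitly elsewhere in the paper, e.g.\ when $\delta=\max\{diam(Q)\,|\,Q\in\mathfrak{B}_{\Lambda}(\Omega)\}$ is treated as infinitesimal in the proof of property I of the generalized derivative.)

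The gap is in your verification of clause (i). The identity $\theta_{E}\theta_{E'}=\theta_{E\cap E'}$ is false: densities multiply pointwise, they do not compute the density of the intersection. If $Q$ and $R$ are disjoint adjacent cells sharing a smooth piece of boundary and $x$ lies on that common face, then $\theta_{Q}(x)=\theta_{R}(x)=\frac{1}{2}$, so $\theta_{Q}(x)\theta_{R}(x)=\frac{1}{4}$ while $\theta_{Q\cap R}(x)=0$; similarly $\theta_{Q}^{2}(x)=\frac{1}{4}\neq\frac{1}{2}=\theta_{Q}(x)$. This is not a repairable slip of notation, because the products in (\ref{eq:pinta-0}) can genuinely leave $V(\Omega)$: Lemma \ref{thm:dodici} explicitly allows a basis point $a\in\Xi(Q)$ to lie on $\partial Q$, and for such a point $\zeta_{a}^{2}$ carries the factor $\theta_{Q}^{2}$ on the support of $\rho_{a}$. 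Since $\zeta_{a}^{2}$ agrees a.e.\ with the Caccioppoli function $\rho_{a}^{2}\theta_{Q}/(\rho_{a}(a)\theta_{Q}(a))^{2}$, and since by (\ref{eq:ola}) two elements of $V(\Omega)$ that agree a.e.\ agree everywhere, membership $\zeta_{a}^{2}\in V(\Omega)$ would force $\theta_{Q}^{2}=\theta_{Q}$ at all boundary points where $\rho_{a}\neq0$, which fails wherever the density equals $\frac{1}{2}$. So the inclusion $V_{\lambda}(\Omega)\subset V(\Omega)$ cannot be obtained by the route you give. Two further problems in the same part: your level-$\lambda$ spaces are not an increasing net (the cells and the $\zeta_{a}$'s change with $\lambda$), and the spanning condition $V_{\lambda}(\Omega)\supseteq Span(V(\Omega)\cap\lambda)$ does not follow from Lemma \ref{lem:Wee}, because $g\in V(\Omega)\cap\lambda$ puts only $g$ itself, not its representation data $f_{k},E_{k}$, inside $\lambda$; the decomposition $\theta_{E_{k}}=\sum_{Q}\theta_{Q}$ along $\mathfrak{B}_{\lambda}(\Omega)$ is therefore available only eventually for each fixed standard $g$, which yields $V(\Omega)^{\sigma}\subset V_{\Lambda}(\Omega)$ but not the literal requirement of Definition \ref{def:ultraf-1}. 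To be fair, the paper's one-line proof says nothing about clause (i) either, and under a strict pointwise reading these difficulties afflict the construction (\ref{eq:pinta-0}) itself; but as written, your argument for clause (i) does not close.
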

If $E\in\mathfrak{C}_{\Lambda}(\Omega)$ (see (\ref{eq:laura})),
we set
\[
E_{\Gamma}=\bigcup_{Q\in\mathfrak{B}_{\Lambda}(\Omega),Q\subset E}Q_{\Gamma}
\]
If, for any internal set $A$, we define
\[
\sqint_{A}u(x)dx=\sum_{a\in\Gamma\cap A}u(a)\eta_{a}
\]
then, we have the following result:
\begin{thm}
\label{thm:rigoletto}If $u\theta_{E}\in V_{\Lambda}(\Omega)$ and
$E\in\mathfrak{C}_{\Lambda}(\Omega)$, then
\[
\sqint_{E_{\Gamma}}u(x)dx=\int_{E}^{*}u(x)dx=\sqint u(x)\theta_{E}(x)dx.
\]
\end{thm}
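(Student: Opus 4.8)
The plan is to prove the two equalities separately, reading the chain from right to left. The rightmost equality $\int_{E}^{*}u\,dx=\sqint u\,\theta_{E}\,dx$ is the soft one and I would dispose of it first. Since by hypothesis $u\theta_{E}\in V_{\Lambda}(\Omega)$, relation (\ref{eq:lilla-2}) applies verbatim and gives $\sqint u\,\theta_{E}\,dx=\int^{*}u\,\theta_{E}\,dx$. It then remains to identify $\int^{*}u\,\theta_{E}\,dx$ with $\int_{E}^{*}u\,dx$. For a special Caccioppoli set the density $\theta_{E}$ agrees with the characteristic function $\chi_{E}$ off $\partial E$, and $m(\partial E)=0$; hence $\int f\theta_{E}\,dx=\int_{E}f\,dx$ for every integrable $f$ and every $E\in\mathfrak{C}(\Omega)$. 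This is a first order statement, so, since $\mathfrak{C}_{\Lambda}(\Omega)\subset\mathfrak{C}(\Omega)^{*}$ by Lemma \ref{lem:Wee}, transfer yields $\int^{*}u\,\theta_{E}\,dx=\int_{E}^{*}u\,dx$ for our internal $u$ and for $E\in\mathfrak{C}_{\Lambda}(\Omega)$.

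For the left equality I would localize over the hyperfinite basis. By definition $\sqint_{E_{\Gamma}}u\,dx=\sum_{a\in E_{\Gamma}}u(a)\eta_{a}$, because $E_{\Gamma}\subset\Gamma$, and by Theorem \ref{papa} the index set splits disjointly as $E_{\Gamma}=\bigcup_{Q\subset E}Q_{\Gamma}$, so that $\sqint_{E_{\Gamma}}u\,dx=\sum_{Q\in K(E)}\sum_{a\in Q_{\Gamma}}u(a)\eta_{a}$. On the other side, Lemma \ref{lem:Wee} gives $\theta_{E}=\sum_{Q\in K(E)}\theta_{Q}$, whence $\int_{E}^{*}u\,dx=\int^{*}u\,\theta_{E}\,dx=\sum_{Q\in K(E)}\int^{*}u\,\theta_{Q}\,dx=\sum_{Q\in K(E)}\int_{Q}^{*}u\,dx$. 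Thus everything reduces to a single cell identity relating $\int_{Q}^{*}u\,dx$ to a sum over $Q_{\Gamma}$ for each $Q\in K(E)$. To obtain it I would use that the direct sum structure (\ref{eq:fs}) together with $u\theta_{E}\in V_{\Lambda}(\Omega)$ forces each component $u\theta_{Q}$ to lie in $V_{\Lambda}^{0}(Q)$; since $\{\sigma_{a}\}_{a\in Q_{\Gamma}}$ is a $\sigma$-basis of $V_{\Lambda}^{0}(Q)$, expanding $u\theta_{Q}=\sum_{a\in Q_{\Gamma}}(u\theta_{Q})(a)\,\sigma_{a}$ and integrating term by term with $\eta_{a}=\int^{*}\sigma_{a}\,dx$ yields
\[
\int_{Q}^{*}u\,dx=\int^{*}u\,\theta_{Q}\,dx=\sum_{a\in Q_{\Gamma}}u(a)\,\theta_{Q}(a)\,\eta_{a}.
\]

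The delicate point, and what I expect to be the main obstacle, is exactly the passage from the coefficients $(u\theta_{Q})(a)=u(a)\theta_{Q}(a)$ produced by this expansion to the bare values $u(a)$ that appear in $\sqint_{E_{\Gamma}}$. For $a$ interior to $Q$ one has $\theta_{Q}(a)=1$ and there is nothing to check, but the grid carries boundary nodes $a\in\Xi(Q)\cap\partial Q$ at which $\theta_{Q}(a)$ is a genuine fractional density. Controlling these nodes is the crux: one must exploit that each $\sigma_{a}$ is supported in $\mathfrak{mon}(a)$ and carries the factor $\theta_{Q}$, so that $\eta_{a}$ already weighs the node by its $Q$-density; that every shared boundary node is assigned to a unique cell in the construction of Lemma \ref{thm:dodici}; and that $E=\mathrm{int}(\bigcup_{Q\subset E}\overline{Q})$ up to a null set, so that the boundary contributions reorganize cell by cell into the $\int_{Q}^{*}u$. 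Once the single cell identity is secured in the form $\int_{Q}^{*}u\,dx=\sum_{a\in Q_{\Gamma}}u(a)\eta_{a}$, summing over $Q\in K(E)$ closes the argument and gives $\sqint_{E_{\Gamma}}u\,dx=\int_{E}^{*}u\,dx$, completing the chain.
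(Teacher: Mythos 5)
Your handling of the second equality is correct and in fact more direct than the paper's: the paper recovers $\int_{E}^{*}u\,dx=\sqint u\theta_{E}\,dx$ only at the end, by re-assembling the cell decomposition, whereas you get it immediately from (\ref{eq:lilla-2}) plus transfer of the standard fact that $\theta_{E}$ and $\chi_{E}$ differ only on the Lebesgue-null set $\partial E$. (Small repair: apply the transferred identity to the $*$-integrable function $u\theta_{E}$ rather than to $u$ itself, whose integrability is not among the hypotheses; since $\theta_{E}=1$ on $E$ and $\theta_{E}=0$ off $\overline{E}$, this yields the same conclusion.)

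The first equality, however, is where your proposal has a genuine gap, and you say so yourself: the single-cell identity $\int_{Q}^{*}u\,dx=\sum_{a\in Q_{\Gamma}}u(a)\eta_{a}$ is never proved, only motivated. The obstruction is sharper than your sketch suggests. By Definition \ref{def:sqint-1} one has $\sqint u\theta_{E}\,dx=\sum_{a\in\Gamma}u(a)\theta_{E}(a)\eta_{a}$, so, given the second equality, the first one is \emph{equivalent} to the exact balance
\[
\sum_{a\in E_{\Gamma}}u(a)\bigl(1-\theta_{E}(a)\bigr)\eta_{a}\;=\;\sum_{a\in\Gamma\setminus E_{\Gamma}}u(a)\,\theta_{E}(a)\,\eta_{a},
\]
where both sides are hyperfinite sums of generally nonzero infinitesimals carried by the nodes of $\Gamma$ lying on $\partial E$ (nodes on interior interfaces $\partial Q\cap\partial R$ with $Q,R\subset E$ drop out automatically, since there $\theta_{E}(a)=1$; so your worry about \emph{all} cell-boundary nodes is too broad, but the residual worry on $\partial E$ is exactly right). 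The theorem asserts equality of hyperreal numbers, not equality up to infinitesimals, so none of the tools you invoke — monads, null sets, ``$E=\mathrm{int}(\bigcup\overline{Q})$ up to a null set'' — can close this: the pointwise integral is designed precisely to see sets of measure zero (compare the paper's own observation that $\sqint f^{*}\chi_{\partial E}\,dx$ is a nonvanishing infinitesimal). What is needed is a structural property of the constructed basis at the $\partial E$-nodes, and nothing in your text (nor, to be fair, in the paper) supplies it: the paper's proof makes the identical leap when it writes $u_{Q}\theta_{Q}=\sum_{a\in Q_{\Gamma}}u(a)\sigma_{a}$ citing Theorem \ref{papa}, part 3, which actually produces the coefficients $(u_{Q}\theta_{Q})(a)=u(a)\theta_{E}(a)$, not $u(a)$. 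So you have correctly located the crux of this theorem — indeed the very point the paper glosses over — but locating it is not closing it, and as written your proposal does not prove the first equality.
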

\begin{proof}
We have that
\begin{align}
\sqint_{E_{\Gamma}}u(x)dx & =\sum_{a\in E_{\Gamma}}u(a)\eta_{a}=\int^{*}\sum_{a\in E_{\Gamma}}u(a)\sigma_{a}(x)dx\nonumber \\
 & =\int^{*}\sum_{Q\subset E}\,\sum_{a\in Q_{\Gamma}}u(a)\sigma_{a}(x)dx\label{eq:rosamunda}
\end{align}
Since $u\theta_{E}\in V_{\Lambda}(\Omega)$, then, by \eqref{eq:splitty},
we can write 
\[
u(x)\theta_{E}(x)=\sum_{Q\subset E}u_{Q}(x)\theta_{Q}(x).
\]

By Th. \ref{papa}, 3., $u_{Q}(x)\theta_{Q}(x)=\sum_{a\in Q_{\Gamma}}u(a)\sigma_{a}(x)\in V_{\Lambda}^{0}(Q)$.
Then by (\ref{eq:rosamunda})

\begin{align}
\sqint_{E_{\Gamma}}u(x)dx & =\int^{*}\sum_{Q\subset E}\,u_{Q}(x)\theta_{Q}(x)dx=\sum_{Q\subset E}\,\int^{*}u_{Q}(x)\theta_{Q}(x)dx.\label{eq:carla}
\end{align}
By this equation and the fact that $\int^{*}u_{Q}(x)\theta_{Q}(x)dx=\int_{Q}^{*}u(x)dx$,
it follows that

\[
\sqint_{E_{\Gamma}}u(x)dx=\sum_{Q\subset E}\,\int_{Q}^{*}u(x)dx=\int_{E}^{*}u(x)dx.
\]
Moreover, since $u_{Q}\theta_{Q}\in V_{\Lambda}^{0}(Q)\subset V_{\Lambda}(\Omega)$,
\[
\int^{*}u_{Q}(x)\theta_{Q}(x)dx=\sqint u_{Q}(x)\theta_{Q}(x)dx,
\]
by (\ref{eq:carla}) we have
\[
\sqint_{E_{\Gamma}}u(x)dx=\sum_{Q\subset E}\sqint u_{Q}(x)\theta_{Q}(x)dx=\sqint\sum_{Q\subset E}u_{Q}(x)\theta_{Q}(x)dx=\sqint u(x)\theta_{E}(x)dx.
\]
\end{proof}

\subsection{Construction of the generalized derivative}

Next we construct a generalized derivative on $V_{\Lambda}(\Omega)$.

We set\emph{
\[
U_{\Lambda}^{1}=\underset{Q\in\mathfrak{B}_{\Lambda}(\Omega)}{\bigoplus}V_{\Lambda}^{1}(Q),
\]
 }and 
\[
U_{\Lambda}^{0}=\left(U_{\Lambda}^{1}\right)^{\perp},
\]
will denote the the orthogonal complement of $U_{\Lambda}^{1}$ in
$V_{\Lambda}(\Omega)$. According to this decomposition, $V_{\Lambda}(\Omega)=U_{\Lambda}^{1}\oplus U_{\Lambda}^{0}$
and we can define the following orthogonal projectors
\[
P_{i}\,:\,V_{\Lambda}(\Omega)\rightarrow U_{\Lambda}^{i},\,\,i=0,1\,\,,
\]
hence, any ultrafunction $u\in V_{\Lambda}(\Omega)$ has the following
orthogonal splitting: $u=u_{1}+u_{0}$ where $u_{i}=P_{i}u.$

Now we are able to define the generalized partial derivative for $u\in V_{\Lambda}^{1}(\Omega)$.
\begin{defn}
\label{def:bellissima}We define the generalized partial derivative
\[
D_{i}:U_{\Lambda}^{1}(\Omega)\rightarrow V_{\Lambda}(\Omega),
\]
 as follows: 
\begin{equation}
\sqint D_{i}uv\,dx=\sum_{Q\in\mathfrak{B}_{\Lambda}(\Omega)}\sqint\partial_{i}^{*}u_{Q}v_{Q}\theta_{Q}\,dx-\frac{1}{2}\sum_{Q\in\mathfrak{B}_{\Lambda}(\Omega)}\sum_{R\in\mathfrak{Y}(Q)}\int_{\partial Q\cap\partial R}^{*}\left(u_{Q}-u_{R}\right)v_{Q}\,(\mathbf{e}_{i}\cdot\mathbf{n}_{Q})\,dS,\label{eq:g1}
\end{equation}
where 
\[
\mathfrak{Y}(Q)=\left\{ R\in\mathfrak{B}_{\Lambda}(\Omega)\cup\left\{ Q_{\infty}\right\} \,|\,Q\neq R,\,\,\partial Q\cap\partial R\neq\emptyset\right\} 
\]
with 
\[
Q_{\infty}=\Omega^{*}\setminus\overline{\bigcup_{Q\in\mathfrak{B}_{\Lambda}(\Omega)}Q},
\]
Moreover, if $u=u_{1}+u_{0}\in U_{\Lambda}^{1}\oplus U_{\Lambda}^{0}=V_{\Lambda}(\Omega),$
we set
\begin{equation}
D_{i}u=D_{i}u_{1}-\left(D_{i}P_{1}\right)^{\dagger}u_{0},\label{eq:bellissima-2}
\end{equation}
where, for any linear operator $L$, $L^{\dagger}$ denotes the adjoint
operator.
\end{defn}
\medskip{}

\begin{rem}
Notice that, if $u,v\in U_{\Lambda}^{1}$, by Th. \ref{thm:dodici},2.,
we have
\begin{equation}
\sqint D_{i}uv\,dx=\sum_{Q\in\mathfrak{B}_{\Lambda}(\Omega)}\int_{Q}^{*}\partial_{i}^{*}u_{Q}v_{Q}\,dx-\frac{1}{2}\sum_{Q\in\mathfrak{B}_{\Lambda}(\Omega)}\sum_{R\in\mathfrak{Y}(Q)}\int_{\partial Q\cap\partial R}^{*}\left(u_{Q}-u_{R}\right)v_{Q}\,(\mathbf{e}_{i}\cdot\mathbf{n}_{Q})\,dS.\label{eq:g2}
\end{equation}
In fact, if $u,\,v\in U_{\Lambda}^{1}$, then $u_{Q},\,v_{Q}\in V_{\Lambda}^{1}(\Omega)\,$
and hence, by $\eqref{eq:pinta-0},$ $\partial_{i}^{*}u_{Q}v_{Q}\in V_{\Lambda}^{0}(\Omega)$
and so
\[
\sqint\partial_{i}^{*}u_{Q}v_{Q}\theta_{Q}\,dx=\int_{Q}^{*}\partial_{i}^{*}u_{Q}v_{Q}\,dx.
\]
\end{rem}
\medskip{}

\begin{thm}
The operator $D_{i}:\,V_{\Lambda}(\Omega)\rightarrow V_{\Lambda}(\Omega)$,
given by Definition \ref{def:bellissima}, satisfies the requests
I, II, and III of Definition \ref{def:deri}. 
\end{thm}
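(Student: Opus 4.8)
The plan is to verify the three requests in the order III, II, I, since the integration--by--parts identity III is the algebraic backbone that makes the other two transparent, and because the extension of $D_{i}$ to all of $V_{\Lambda}(\Omega)$ was tailored, through the adjoint correction in \eqref{eq:bellissima-2}, precisely to force III. Writing $\left\langle u,v\right\rangle :=\sqint uv\,dx$ for the symmetric scalar product \eqref{eq:rina}, and denoting by $A:=D_{i}|_{U_{\Lambda}^{1}}$ the operator defined by \eqref{eq:g1}, the definition reads $D_{i}u=A(u_{1})-(D_{i}P_{1})^{\dagger}u_{0}$ with $u_{1}=P_{1}u$, $u_{0}=P_{0}u$. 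Using the defining property of the adjoint and the symmetry of $\left\langle \cdot,\cdot\right\rangle$, a direct expansion collapses the cross terms and gives
\[
\left\langle D_{i}u,v\right\rangle +\left\langle u,D_{i}v\right\rangle =\left\langle A(u_{1}),v_{1}\right\rangle +\left\langle u_{1},A(v_{1})\right\rangle ,
\]
so that III is \emph{equivalent} to the antisymmetry of $A$ on $U_{\Lambda}^{1}\times U_{\Lambda}^{1}$, i.e. to $B(u,v)=-B(v,u)$ for the bilinear form $B$ of \eqref{eq:g2}.

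To prove this antisymmetry I would symmetrize \eqref{eq:g2}. The volume terms combine, on each cell $Q$, into $\int_{Q}^{*}\partial_{i}^{*}(u_{Q}v_{Q})\,dx$, which by the transferred Gauss--Green formula on the special Caccioppoli set $Q$ equals $\int_{\partial Q}^{*}u_{Q}v_{Q}\,(\mathbf{e}_{i}\cdot\mathbf{n}_{Q})\,dS$; splitting $\partial Q$ into the faces $\partial Q\cap\partial R$, $R\in\mathfrak{Y}(Q)$, and adding the symmetrized correction terms, everything reduces to
\[
B(u,v)+B(v,u)=\frac{1}{2}\sum_{Q\in\mathfrak{B}_{\Lambda}(\Omega)}\sum_{R\in\mathfrak{Y}(Q)}\int_{\partial Q\cap\partial R}^{*}\left(u_{R}v_{Q}+v_{R}u_{Q}\right)(\mathbf{e}_{i}\cdot\mathbf{n}_{Q})\,dS.
\]
This vanishes: each interior face is counted twice, as $(Q,R)$ and as $(R,Q)$, with identical integrand but opposite normals $\mathbf{n}_{R}=-\mathbf{n}_{Q}$, so the two contributions cancel, while the faces against $Q_{\infty}$ contribute $0$ since $u_{Q_{\infty}}=v_{Q_{\infty}}=0$. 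Hence $A$ is antisymmetric and III holds.

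For II I would first record the geometric fact underlying the whole construction: the cells $Q\in\mathfrak{B}_{\Lambda}(\Omega)$ have infinitesimal diameter (no two distinct standard points survive in one cell, since a fine ultrafilter eventually separates them by a Caccioppoli set), so $\overline{Q}\subset\mathfrak{mon}(a)$ for every $a\in\overline{Q}$. For $u\in V_{\Lambda}(\Omega)\cap\left[C^{1}(\Omega)\right]^{*}$ each component $u_{Q}$ is internally $C^{1}$ on $Q$, whence $u\in U_{\Lambda}^{1}$ and $u_{0}=0$, so $D_{i}u$ is given directly by \eqref{eq:g1}. By continuity of $u$ the jumps $u_{Q}-u_{R}$ vanish on every interior face $\partial Q\cap\partial R$, so the correction sum drops out; and since $u_{Q}=u$ on $Q$, the volume term reduces, by Theorem \ref{thm:rigoletto}, to $\sum_{Q}\sum_{a\in Q_{\Gamma}}\partial_{i}^{*}u(a)\,v(a)\,\eta_{a}=\sqint\partial_{i}^{*}u\,v\,dx$ for every $v$. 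As $P\text{\textdegree}$ is the orthogonal projection, this equals $\sqint\left(\partial_{i}^{*}u\right)\text{\textdegree}v\,dx$, and nondegeneracy of $\left\langle \cdot,\cdot\right\rangle$ yields $D_{i}u=\left(\partial_{i}^{*}u\right)\text{\textdegree}$, which is II.

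Finally, for I I would exploit the same infinitesimality together with the nearest--neighbour structure of $A$. The basis functions satisfy $\mathfrak{supp}(\sigma_{a})\subset\overline{Q}\subset\mathfrak{mon}(a)$, and \eqref{eq:g1} shows that $A$ couples a cell $Q$ only to its adjacent cells $R\in\mathfrak{Y}(Q)$; since $\overline{Q}\cup\bigcup_{R\in\mathfrak{Y}(Q)}\overline{R}$ is a finite union of mutually adjacent infinitesimal cells, it is still contained in $\mathfrak{mon}(a)$. The projectors $P_{0},P_{1}$ respect the decomposition $V_{\Lambda}(\Omega)=\bigoplus_{Q}V_{\Lambda}^{0}(Q)$, so $(\sigma_{a})_{0},(\sigma_{a})_{1}$ stay supported in $\overline{Q}$; applying $A$ to $(\sigma_{a})_{1}$ and, dually, noting that $\left\langle (D_{i}P_{1})^{\dagger}(\sigma_{a})_{0},v\right\rangle =\left\langle (\sigma_{a})_{0},A(v_{1})\right\rangle$ is nonzero only when $v$ meets $\overline{Q}$ through its neighbours, one concludes $\mathfrak{supp}(D_{i}\sigma_{a})\subset\mathfrak{mon}(a)$. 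I expect the main obstacle to be the boundary bookkeeping in III --- making the symmetrized face integrals cancel exactly, including the outer cell $Q_{\infty}$ and the $\mathcal{H}^{N-1}$--null set where three or more cells meet --- together with this dual locality argument in I, where one must control the support of the adjoint operator $(D_{i}P_{1})^{\dagger}$ rather than of a directly given expression.
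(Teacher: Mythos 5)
Your proof is correct and is essentially the paper's own argument: III is reduced, via the adjoint definition (\ref{eq:bellissima-2}) and the symmetry of the form, to antisymmetry of (\ref{eq:g2}) on $U_{\Lambda}^{1}$, which is then obtained from per-cell Gauss--Green together with the relabeling $(Q,R)\mapsto(R,Q)$ and $\mathbf{n}_{R}=-\mathbf{n}_{Q}$; II follows from the vanishing of the jumps $u_{Q}-u_{R}$ and nondegeneracy of $\sqint$; and I follows from the nearest-neighbour coupling of infinitesimally small cells. The differences are purely presentational: you symmetrize $B(u,v)+B(v,u)$ where the paper computes $\sqint uD_{i}v\,dx$ directly and compares it with (\ref{eq:pilu}), and you make explicit two points the paper leaves implicit (that a $C^{1}$ ultrafunction lies in $U_{\Lambda}^{1}$, and the locality of the adjoint term $\left(D_{i}P_{1}\right)^{\dagger}u_{0}$).
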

\begin{proof}
Let us prove property I. If $u\theta_{Q},v\theta_{R}\in V_{\Lambda}(\Omega)$
and $\overline{Q}\cap\overline{R}=\emptyset$, by Definition \ref{def:bellissima},
\[
\sqint D_{i}\left(u\theta_{Q}\right)v\theta_{R}\,dx=0.
\]
 Set
\[
\delta:=\max\left\{ diam(Q)\ |\ Q\in\mathfrak{B}_{\Lambda}(\Omega)\right\} .
\]
If $q\in Q$ and $r\in R,$ then
\[
|q-r|>2\delta\Rightarrow\overline{Q}\cap\overline{R}=\emptyset,
\]
so, if $\sigma_{q}\in V_{\Lambda}^{0}(Q)$, and $r\in R,$ then
\[
|q-r|>2\delta\Rightarrow\overline{Q}\cap\overline{R}=\emptyset,
\]
and hence, if we set $\varepsilon_{0}>3\delta$,
\[
\bigcup\left\{ R\in\mathfrak{B}_{\Lambda}(\Omega)\ |\ \overline{Q}\cap\overline{R}\neq\emptyset\right\} \subset B_{\varepsilon_{0}}(q).
\]
Since $\sigma_{q}\in V_{\Lambda}^{0}(Q)$,
\[
\mathfrak{supp}\left(D_{i}\sigma_{q}\right)\subset\overline{\bigcup\left\{ R\in\mathfrak{B}_{\Lambda}(\Omega)\ |\ \overline{Q}\cap\overline{R}\neq\emptyset\right\} }\subset B_{\varepsilon_{0}}(q).
\]

We prove property II. If $u\in\left[C^{1}(\Omega)\right]^{*}\cap V_{\Lambda}(\Omega)$,
then $u=\sum_{_{Q\in\mathfrak{B}_{\Lambda}(\Omega)}}u\theta_{Q}$,
and hence $\forall x\in\partial Q\cap\partial R$, $u_{Q}(x)-u_{R}(x)=u(x)-u(x)=0.$
Then, by (\ref{eq:g1}), we have that, $\forall v\in V_{\Lambda}(\Omega)$ 

\begin{align*}
\sqint D_{i}uv\,dx & =\sum_{Q\in\mathfrak{B}_{\Lambda}(\Omega)}\sqint\partial_{i}^{*}uv_{Q}\theta_{Q}\,dx=\sqint\partial_{i}^{*}u\left(\sum_{Q\in\mathfrak{B}_{\Lambda}(\Omega)}v_{Q}\theta_{Q}\right)\,dx\\
 & =\sqint\partial_{i}^{*}uv\,dx.
\end{align*}
The conclusion follows from the arbitrariness of $v$.

Next let us prove property III. First we prove this property if $u,v\in U_{\Lambda}^{1}$.
By (\ref{eq:g2}), we have that
\begin{align}
\sqint D_{i}uv\,dx & =\sum_{Q\in\mathfrak{B}_{\Lambda}(\Omega)}\int_{Q}^{*}\partial_{i}^{*}uv\,dx-\frac{1}{2}\sum_{Q\in\mathfrak{B}_{\Lambda}(\Omega)}\sum_{R\in\mathfrak{Y}(Q)}\int_{\partial Q\cap\partial R}^{*}u_{Q}v_{Q}\,(\mathbf{e}_{i}\cdot\mathbf{n}_{Q})\,dS\nonumber \\
 & +\frac{1}{2}\sum_{Q\in\mathfrak{B}_{\Lambda}(\Omega)}\sum_{R\in\mathfrak{Y}(Q)}\int_{\partial Q\cap\partial R}^{*}u_{R}v_{Q}\,(\mathbf{e}_{i}\cdot\mathbf{n}_{Q})\,dS\label{eq:pilu}\\
 & =\sum_{Q\in\mathfrak{B}_{\Lambda}(\Omega)}\int_{Q}^{*}\partial_{i}^{*}uv\,dx-\frac{1}{2}\sum_{Q\in\mathfrak{B}_{\Lambda}(\Omega)}\int_{\partial Q}^{*}u_{Q}v_{Q}\,(\mathbf{e}_{i}\cdot\mathbf{n}_{Q})\,dS\\
 & +\frac{1}{2}\sum_{Q\in\mathfrak{B}_{\Lambda}(\Omega)}\sum_{R\in\mathfrak{Y}(Q)}\int_{\partial Q\cap\partial R}^{*}u_{R}v_{Q}\,(\mathbf{e}_{i}\cdot\mathbf{n}_{Q})\,dS.
\end{align}
Next we will compute $\sqint uD_{i}v\,dx$ and we will show that it
is equal to $-\sqint D_{i}uv\,dx$. So we replace $u$ with $v$,
in the above equality and we get
\begin{align}
\sum_{Q\in\mathfrak{B}_{\Lambda}(\Omega)}\int_{Q}^{*}uD_{i}v\,dx & =\sum_{Q\in\mathfrak{B}_{\Lambda}(\Omega)}\int_{Q}^{*}\partial_{i}^{*}v_{Q}u_{Q}\,dx-\frac{1}{2}\sum_{Q\in\mathfrak{B}_{\Lambda}(\Omega)}\int_{\partial Q}^{*}u_{Q}v_{Q}\,(\mathbf{e}_{i}\cdot\mathbf{n}_{Q})\,dS\nonumber \\
 & +\frac{1}{2}\sum_{Q\in\mathfrak{B}_{\Lambda}(\Omega)}\sum_{R\in\mathfrak{Y}(Q)}\int_{\partial Q\cap\partial R}^{*}v_{R}u_{Q}\,(\mathbf{e}_{i}\cdot\mathbf{n}_{Q})\,dS.\label{eq:jk}
\end{align}
Now, we compute $\sum_{_{Q\in\mathfrak{B}_{\Lambda}(\Omega)}}\smallint_{_{Q}}^{*}\partial_{i}^{*}v_{Q}u_{Q}\,dx$
and the last term of the above expression separately. We have that

\begin{align}
\sum_{Q\in\mathfrak{B}_{\Lambda}(\Omega)}\int_{Q}^{*}\partial_{i}^{*}v_{Q}u_{Q}dx & =-\sum_{Q\in\mathfrak{B}_{\Lambda}(\Omega)}\int_{Q}^{*}\partial_{i}^{*}u_{Q}v_{Q}dx+\sum_{Q\in\mathfrak{B}_{\Lambda}(\Omega)}\int_{\partial Q}^{*}u_{Q}v_{Q}\,(\mathbf{e}_{i}\cdot\mathbf{n}_{Q})\,dS.\label{eq:bimba}
\end{align}
Moreover, the last term in \eqref{eq:jk}, changing the order on which
the terms are added, becomes
\[
\sum_{Q\in\mathfrak{B}_{\Lambda}(\Omega)}\sum_{R\in\mathfrak{Y}(Q)}\int_{\partial Q\cap\partial R}^{*}v_{R}u_{Q}\,(\mathbf{e}_{i}\cdot\mathbf{n}_{Q})\,dS=\sum_{R\in\mathfrak{B}_{\Lambda}(\Omega)}\sum_{Q\in\mathfrak{Y}(R)}\int_{\partial Q\cap\partial R}^{*}v_{R}u_{Q}\,(\mathbf{e}_{i}\cdot\mathbf{n}_{Q})\,dS.
\]
In the right hand side we can change the name of the variables $Q$
and $R$:
\begin{align}
\sum_{Q\in\mathfrak{B}_{\Lambda}(\Omega)}\sum_{R\in\mathfrak{Y}(Q)}\int_{\partial Q\cap\partial R}^{*}v_{R}u_{Q}\,(\mathbf{e}_{i}\cdot\mathbf{n}_{Q})\,dS & =\sum_{Q\in\mathfrak{B}_{\Lambda}(\Omega)}\sum_{R\in\mathfrak{Y}(Q)}\int_{\partial Q\cap\partial R}^{*}v_{Q}u_{R}\,(\mathbf{e}_{i}\cdot\mathbf{n}_{R})\,dS\nonumber \\
 & =-\sum_{Q\in\mathfrak{B}_{\Lambda}(\Omega)}\sum_{R\in\mathfrak{Y}(Q)}\int_{\partial Q\cap\partial R}^{*}v_{Q}u_{R}\,(\mathbf{e}_{i}\cdot\mathbf{n}_{Q})\,dS.\label{eq:bella}
\end{align}
In the last step we have used the fact that $x\in\partial Q\cap\partial R\Rightarrow\mathbf{n}_{R}(x)=-\mathbf{n}_{Q}(x)$.
Replacing (\ref{eq:bimba}) and (\ref{eq:bella}) in (\ref{eq:jk})
we get 
\begin{align*}
\sum_{Q\in\mathfrak{B}_{\Lambda}(\Omega)}\int_{Q}^{*}uD_{i}v\,dx & =-\sum_{Q\in\mathfrak{B}_{\Lambda}(\Omega)}\int_{Q}^{*}\partial_{i}^{*}u_{Q}v_{Q}dx+\frac{1}{2}\sum_{Q\in\mathfrak{B}_{\Lambda}(\Omega)}\int_{\partial Q}^{*}u_{Q}v_{Q}\,(\mathbf{e}_{i}\cdot\mathbf{n}_{Q})\,dS\\
 & -\frac{1}{2}\sum_{Q\in\mathfrak{B}_{\Lambda}(\Omega)}\sum_{R\in\mathfrak{Y}(Q)}\int_{\partial Q\cap\partial R}^{*}v_{Q}u_{R}\,(\mathbf{e}_{i}\cdot\mathbf{n}_{Q})\,dS.
\end{align*}

Comparing (\ref{eq:pilu}) and the above equation, we get that

\begin{equation}
\forall u,v\in U_{\Lambda}^{1},\ \ \sqint D_{i}uv\,dx=-\sqint uD_{i}v\,dx.\label{eq:ppp}
\end{equation}

Let us prove property III in the general case. We have that 
\begin{equation}
D_{i}u=D_{i}u_{1}-\left(D_{i}P_{1}\right)^{\dagger}u_{0}\label{eq:bellissima-2-1}
\end{equation}
hence

\begin{align}
\sqint D_{i}uv\,dx & =\sqint D_{i}u_{1}v\,dx-\sqint\left(D_{i}P_{1}\right)^{\dagger}u_{0}v\,dx\nonumber \\
 & =\sqint D_{i}u_{1}v_{1}\,dx+\sqint D_{i}u_{1}v_{0}\,dx-\sqint u_{0}D_{i}P_{1}v\,dx\nonumber \\
 & =\sqint D_{i}u_{1}v_{1}\,dx+\sqint D_{i}u_{1}v_{0}\,dx-\sqint u_{0}D_{i}v_{1}\,dx.\label{eq:melinda}
\end{align}
Now, replacing $u$ with $v$ and applying property (\ref{eq:ppp})
for $u_{1},v_{1}\in U_{\Lambda}^{1}$, we get 
\begin{align*}
\sqint uD_{i}v\,dx & =\sqint u_{1}D_{i}v_{1}\,dx-\sqint D_{i}u_{1}v_{0}\,dx+\sqint u_{0}D_{i}v_{1}\,dx\\
 & =-\sqint D_{i}u_{1}v_{1}\,dx-\sqint D_{i}u_{1}v_{0}\,dx+\sqint u_{0}D_{i}v_{1}\,dx.
\end{align*}
Comparing the above equation with (\ref{eq:melinda}) we get that
\[
\sqint D_{i}uv\,dx=-\sqint uD_{i}v\,dx.
\]
\end{proof}
Before proving property IV we need the following lemma:
\begin{lem}
The following identity holds true: $\forall E\in\mathfrak{C}_{\Lambda}(\Omega)$,
$\forall u\in V_{\Lambda}(\Omega)\cap\left[C^{1}(\Omega)\right]^{*}$
and $\forall v\in V_{\Lambda}(\Omega)$, 
\begin{equation}
\sqint D_{i}\left(u\theta_{E}\right)v\,dx=\sqint\partial_{i}uv\theta_{E}\,dx-\int_{\partial E}^{*}uv\,(\mathbf{e}_{i}\cdot\mathbf{n}_{E})\,dS.\label{eq:cordelia}
\end{equation}
\end{lem}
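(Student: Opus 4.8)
The plan is to apply the defining formula (\ref{eq:g1}) for $D_i$ directly to the ultrafunction $w:=u\theta_E$, and then to reorganize the resulting volume and interface sums into the two terms on the right-hand side. First I would record the component structure of $w$. Since $u\in[C^{1}(\Omega)]^{*}$, exactly as in the verification of property II one may write $u=\sum_{Q}u\theta_{Q}$; combining this with Lemma \ref{lem:Wee}, which gives $\theta_{E}=\sum_{Q\in K(E)}\theta_{Q}$ with $K(E)=\{Q\in\mathfrak{B}_{\Lambda}(\Omega)\,:\,Q\subset E\}$, the components of $w$ are $w_{Q}=u$ for $Q\subset E$ and $w_{Q}=0$ otherwise. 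In particular $w\in U_{\Lambda}^{1}$, so (\ref{eq:g1}) applies and, for an arbitrary $v\in V_{\Lambda}(\Omega)$, it splits $\sqint D_{i}w\,v\,dx$ into a volume sum over the cubes $Q$ and a double interface sum over the faces $\partial Q\cap\partial R$.

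The volume sum collapses onto the cubes contained in $E$, namely $\sum_{Q\subset E}\sqint\partial_{i}^{*}u\,v_{Q}\theta_{Q}\,dx$, which I would identify with the first right-hand term $\sqint\partial_{i}u\,v\theta_{E}\,dx$. For the interface sum I would observe that the factor $w_{Q}-w_{R}$ vanishes unless exactly one of $Q,R$ lies in $K(E)$, so that only the faces separating $E$ from its complement survive; these faces are precisely the pieces of $\partial E$, on which the measure-theoretic normal satisfies $\mathbf{n}_{Q}=\mathbf{n}_{E}$. The crucial manoeuvre is to pair the ordered contribution of a face $F=\partial Q\cap\partial R$ (with $Q\subset E$, $R\not\subset E$) with that of the reversed pair $(R,Q)$: using $\mathbf{n}_{R}=-\mathbf{n}_{Q}$ on $F$ and the fact that there $\theta_{Q}=\theta_{R}=\tfrac12$, so that $v=\tfrac12(v_{Q}+v_{R})$ on $F$, the two ordered terms add to $u(v_{Q}+v_{R})(\mathbf{e}_{i}\cdot\mathbf{n}_{Q})$, and the prefactor $\tfrac12$ turns $\tfrac12(v_{Q}+v_{R})$ back into $v$. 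This produces exactly $-\int_{F}^{*}uv\,(\mathbf{e}_{i}\cdot\mathbf{n}_{E})\,dS$ on each face, and summing over all such faces gives the boundary term $-\int_{\partial E}^{*}uv\,(\mathbf{e}_{i}\cdot\mathbf{n}_{E})\,dS$. As a consistency check, taking $u\equiv1$ makes the volume term vanish and recovers property IV of Definition \ref{def:deri}.

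The main obstacle is the bookkeeping of this interface sum: one must track the orientation (the sign flip $\mathbf{n}_{R}=-\mathbf{n}_{Q}$), use that (\ref{eq:g1}) sums over \emph{ordered} pairs so each face is counted twice, and correctly identify the boundary value of $v$ on a face with the average $\tfrac12(v_{Q}+v_{R})$ of the two adjacent components; one must also dispose of the outer region $Q_{\infty}$ and of the lower-dimensional set where three or more cubes meet, which is $\mathcal{H}^{N-1}$-negligible and does not affect the surface integrals. A secondary point deserving care is the volume term: since the products $\partial_{i}^{*}u\,v_{Q}$ need not lie in $V_{\Lambda}^{0}$, the identity $\sum_{Q\subset E}\sqint\partial_{i}^{*}u\,v_{Q}\theta_{Q}\,dx=\sqint\partial_{i}u\,v\theta_{E}\,dx$ must be read at the level of the pointwise integral and the $\sigma$-basis, rather than by converting $\sqint$ into $\int^{*}$ cube by cube, and one should check that the contributions of the basis points lying on the faces are properly accounted for.
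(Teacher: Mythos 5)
Your proof follows essentially the same route as the paper's: you decompose $u\theta_E$ componentwise (your $w_Q$ is the paper's $h_Q u$), apply the defining formula (\ref{eq:g1}), collapse the volume sum to $\sqint\partial_i^* u\,v\,\theta_E\,dx$ via Theorem \ref{thm:rigoletto}, and pair the two ordered interface contributions across $\partial E$ using $\mathbf{n}_R=-\mathbf{n}_Q$, so that the two $\tfrac12$-weighted halves add up to the full boundary term $-\int_{\partial E}^{*}uv\,(\mathbf{e}_{i}\cdot\mathbf{n}_{E})\,dS$. Your explicit identification of the face value of $v$ with the average $\tfrac12(v_Q+v_R)$ is a piece of bookkeeping the paper glosses over (it simply writes $v$ in place of $v_Q$ when invoking (\ref{eq:g1})), but the underlying argument is the same.
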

\begin{proof}
We can write
\[
u\theta_{E}=\sum_{Q\in\mathfrak{B}_{\Lambda}(\Omega)}h_{Q}u\theta_{Q},
\]
where
\[
h_{Q}=\begin{cases}
1 & if\ Q\subset E\\
0 & if\ Q\nsubseteq E
\end{cases}
\]
Then, we have that 
\[
h_{Q}u-h_{R}u=u,
\]
 if and only if,
\[
R\in\mathfrak{Y}_{Q,E}^{+}:=\left\{ R\in\mathfrak{B}_{\Lambda}(\Omega)\cup\left\{ Q_{\infty}\right\} \,|\,\partial R\cap\partial E\neq\emptyset,\,Q\subset E,\,R\subset\Omega\setminus E\right\} .
\]
 Moreover, we have that,
\[
h_{Q}u-h_{R}u=-u,
\]
if and only if,
\[
R\in\mathfrak{Y}_{Q,E}^{-}:=\left\{ R\in\mathfrak{B}_{\Lambda}(\Omega)\cup\left\{ Q_{\infty}\right\} \,|\,\partial R\cap\partial E\neq\emptyset,\,Q\subset\Omega\setminus E,\,R\subset E\right\} .\,
\]
Otherwise, we have that 
\[
h_{Q}u-h_{R}u=0\,\,\,\,or\,\,\,\,\partial R\cap\partial E=\emptyset.
\]
Then, by Theorem \ref{thm:rigoletto},
\begin{align*}
\sqint D_{i}\left(u_{E}\theta_{E}\right)v\,dx & =\sum_{Q\in\mathfrak{B}_{\Lambda}(\Omega)}\sqint\partial_{i}^{*}\left(h_{Q}u\right)v\theta_{Q}\,dx-\frac{1}{2}\sum_{Q\in\mathfrak{B}_{\Lambda}(\Omega)}\sum_{R\in\mathfrak{Y}(Q)}\int_{\partial Q\cap\partial R}^{*}\left(h_{Q}u-h_{R}u\right)v\,(\mathbf{e}_{i}\cdot\mathbf{n}_{Q})\,dS\\
 & =\sum_{Q\in\mathfrak{B}_{\Lambda}(\Omega)}h_{Q}\sqint_{Q_{\Gamma}}\partial_{i}^{*}uvdx-\frac{1}{2}\sum_{Q\in\mathfrak{B}_{\Lambda}(\Omega)}\sum_{R\in\mathfrak{Y}_{Q,E}^{+}}\int_{\partial Q\cap\partial R}^{*}uv\,(\mathbf{e}_{i}\cdot\mathbf{n}_{Q})\,dS\\
 & +\frac{1}{2}\sum_{Q\in\mathfrak{B}_{\Lambda}(\Omega)}\sum_{R\in\mathfrak{Y}_{Q,E}^{-}}\int_{\partial Q\cap\partial R}^{*}uv\,(\mathbf{e}_{i}\cdot\mathbf{n}_{Q})\,dS\\
 & =\sqint_{E_{\Gamma}}\partial_{i}^{*}uv\,dx-\frac{1}{2}\sum_{Q\in\mathfrak{B}_{\Lambda}(\Omega),Q\subset E}\int_{\partial Q\cap\partial E}^{*}uv\,(\mathbf{e}_{i}\cdot\mathbf{n}_{E})\,dS\\
 & +\frac{1}{2}\sum_{Q\in\mathfrak{B}_{\Lambda}(\Omega),\,Q\subset\Omega\setminus E}\int_{\partial Q\cap\partial E}^{*}uv\,(\mathbf{e}_{i}\cdot(-\mathbf{n}_{E}))\,dS\\
 & =\sqint\partial_{i}^{*}uv\theta_{E}\,dx-\frac{1}{2}\int_{\partial E}^{*}uv\,(\mathbf{e}_{i}\cdot\mathbf{n}_{E})\,dS+\frac{1}{2}\int_{\partial E}^{*}uv\,(\mathbf{e}_{i}\cdot(-\mathbf{n}_{E}))\,dS\\
 & =\sqint\partial_{i}^{*}uv\theta_{E}\,dx-\int_{\partial E}^{*}uv\,(\mathbf{e}_{i}\cdot\mathbf{n}_{E})\,dS.
\end{align*}
 Then (\ref{eq:cordelia}) holds true.
\end{proof}
\begin{cor}
The operator $D_{i}:\,V_{\Lambda}(\Omega)\rightarrow V_{\Lambda}(\Omega)$
given by Definition \ref{def:bellissima} satisfies the request IV
of Definition \ref{def:deri}. 
\end{cor}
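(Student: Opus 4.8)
The plan is to recognize property IV as the special case $u\equiv 1$ of the integration-by-parts identity \eqref{eq:cordelia} just established. Concretely, I would fix $E\in\mathfrak{C}(\Omega)$ and first observe that $E$ is covered by the preceding Lemma: since by Lemma \ref{lem:Wee} one has $\mathfrak{C}(\Omega)^{\sigma}\subset\mathfrak{C}_{\Lambda}(\Omega)$, a standard special Caccioppoli set may be regarded as an element of $\mathfrak{C}_{\Lambda}(\Omega)$, and its associated $\theta_{E}$ is exactly the one entering \eqref{eq:cordelia}. Thus the identity $\sqint D_{i}(u\theta_{E})v\,dx=\sqint\partial_{i}^{*}u\,v\,\theta_{E}\,dx-\int_{\partial E}^{*}uv\,(\mathbf{e}_{i}\cdot\mathbf{n}_{E})\,dS$ is available for every $u\in V_{\Lambda}(\Omega)\cap[C^{1}(\Omega)]^{*}$ and every $v\in V_{\Lambda}(\Omega)$.

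The next step is to check that the constant $u=1^{*}$ is an admissible choice, i.e.\ that $1^{*}\in V_{\Lambda}(\Omega)\cap[C^{1}(\Omega)]^{*}$. Smoothness is clear, so $1^{*}\in[C^{1}(\Omega)]^{*}$; for the membership in $V_{\Lambda}(\Omega)$ I would use that $1\in\mathscr{C}(\overline{\Omega})$, so that $1^{*}$ lies in the summand $\lim_{\lambda\uparrow\Lambda}(\lambda\cap\mathscr{C}(\overline{\Omega}))$ entering the definition \eqref{eq:pinta-0} of $V_{\Lambda}^{0}(\Omega)$, whence $1^{*}\theta_{Q}\in V_{\Lambda}^{0}(Q)$ for each $Q$ and $1^{*}\theta_{E}=\theta_{E}\in V_{\Lambda}(\Omega)$.

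Substituting $u=1^{*}$ into \eqref{eq:cordelia} then finishes the proof: the volume term vanishes because $\partial_{i}^{*}1^{*}=0$, while in the boundary term $u=1$ on $\partial E$, so the right-hand side collapses to $-\int_{\partial E}^{*}v\,(\mathbf{e}_{i}\cdot\mathbf{n}_{E})\,dS$, which is precisely request IV of Definition \ref{def:deri}.

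I expect no genuine obstacle here, since the statement is a direct specialization of the Lemma; the only points requiring a little care are the admissibility of the test function and the passage from $\mathfrak{C}(\Omega)$ to $\mathfrak{C}_{\Lambda}(\Omega)$. If one prefers to avoid any discussion of the exterior region $Q_{\infty}$ implicit in the decomposition $1^{*}=\sum_{Q}1^{*}\theta_{Q}$, a cleaner variant is to replace the constant by a smooth cutoff $\chi\in\mathcal{D}(\Omega)$ with $\chi\equiv 1$ on a neighborhood of $\overline{E}$: then $\chi^{*}\theta_{E}=\theta_{E}$, and $\partial_{i}^{*}\chi^{*}=0$ on $\mathfrak{supp}(\theta_{E})$ and on $\partial E$, so both the volume term and the factor $u$ in the boundary term behave exactly as for the constant, yielding the same conclusion.
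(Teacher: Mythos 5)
Your proof is correct and takes essentially the same route as the paper, whose entire proof of this corollary is the one-line observation that request IV follows from (\ref{eq:cordelia}) by taking $u=1$. Your extra verifications (admissibility of the constant function via \eqref{eq:pinta-0}, the passage from $\mathfrak{C}(\Omega)$ to $\mathfrak{C}_{\Lambda}(\Omega)$ through Lemma \ref{lem:Wee}, and the cutoff variant avoiding $Q_{\infty}$) simply make explicit what the paper leaves implicit.
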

\begin{proof}
The result follows straighforward from (\ref{eq:cordelia}) just taking
$u=1$.
\end{proof}

\section{Some examples\label{sec:One-simple-application}}

We present a general minimization result and two very basic examples
which can be analyzed in the framework of ultrafunctions. We have
chosen these examples for their simplicity and also because we can
give explicit solutions. 

\subsection{A minimization result}

In this section we will consider a minimization problem. Let $\Omega$
be an open bounded set in $\mathbb{R}^{N}$ and let $\Xi\subset\partial\Omega$
be any nonempty portion of the boundary. We consider the following
problem: minimize 
\[
J(u)=\int_{\Omega}\left[\frac{1}{2}a(u)|\nabla u(x)|^{p}+f(x,u)\right]dx,\,\,p>1
\]
in the set
\[
\mathscr{C}^{1}(\Omega)\cap\mathscr{C}_{0}(\Omega\cup\Xi).
\]
We make the following assumptions:
\begin{enumerate}
\item $a(u)\geq0$ and $a(u)\geq k>0$ for $u$ sufficiently large;
\item $a(u)$ is lower semicontinuous;
\item $f(x,u)$ is a lower semicontinuous function in $u$, measurable in
$x$, such that $|f(x,u)|\leq M|u|^{q}$, with $0<q<p$ and $M\in\mathbb{R}^{+}$.
\end{enumerate}
Clearly, the above assumptions are not sufficient to guarantee the
existence of a solution, not even in a Sobolev space. We refer to
\cite{squa} for a survey of this problem in the framework of Sobolev
spaces. On the other hand, we have selected this problem since it
can be solved in the framework of the ultrafunctions. 

More exactly, this problem becomes: find an ultrafunction $u\in V_{\Lambda}(\Omega)$
which vanishes on $\Xi^{*}$ and minimizes
\[
J\text{\textdegree}(u):=\sqint_{\Omega}\left[\frac{1}{2}a^{*}(u)\,|Du(x)|^{p}-f^{*}(x,u)\right]dx,\,\,\,p>1.
\]

We have the following result:
\begin{thm}
\label{thm:verde}If assumptions 1,2,3 are satisfied, then the functional
$J\text{\textdegree}(u)$ has a minimizer in the space
\[
\left\{ v\in V_{\Lambda}(\Omega)\,|\,\forall x\in\Xi^{*},\,v(x)=0\,\right\} .
\]
Moreover, if $J(u)$ has a minimizer $w$ in $V_{\Lambda}(\Omega)$,
then $u=w\text{\textdegree}$.
\end{thm}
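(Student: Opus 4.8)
The plan is to solve the problem by the direct method carried out at each finite level $\lambda$ and then transported to the ultrafunctions by a $\Lambda$-limit. For each $\lambda\in\mathfrak{L}$ set
\[
K_{\lambda}=\left\{ v\in V_{\lambda}(\Omega)\ |\ \forall x\in\Xi\cap\lambda,\ v(x)=0\right\},
\]
a finite dimensional vector space, and let $J_{\lambda}$ be the approximating functional whose $\Lambda$-limit is $J\text{\textdegree}$, so that $J\text{\textdegree}=\lim_{\lambda\uparrow\Lambda}J_{\lambda}$ and the admissible set $K=\{v\in V_{\Lambda}(\Omega)\ |\ \forall x\in\Xi^{*},\ v(x)=0\}$ equals $\lim_{\lambda\uparrow\Lambda}K_{\lambda}$. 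First I would show that each $J_{\lambda}$ attains a minimum $u_{\lambda}$ on $K_{\lambda}$; since $K_{\lambda}$ is finite dimensional this is the classical Weierstrass theorem, so only coercivity and lower semicontinuity must be verified. I would then set $u:=\lim_{\lambda\uparrow\Lambda}u_{\lambda}$: given any admissible $v\in K$, written as $v=\lim_{\lambda\uparrow\Lambda}v_{\lambda}$ with $v_{\lambda}\in K_{\lambda}$, the inequalities $J_{\lambda}(u_{\lambda})\le J_{\lambda}(v_{\lambda})$ hold for every $\lambda$ and pass to the $\Lambda$-limit, yielding $J\text{\textdegree}(u)\le J\text{\textdegree}(v)$. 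Hence $u$ minimizes $J\text{\textdegree}$ on $K$, which proves the first assertion.

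The technical heart is the verification that $J_{\lambda}$ is coercive and lower semicontinuous on $K_{\lambda}$. Lower semicontinuity follows from assumptions 2 and 3 (lower semicontinuity of $a$ and of $f$ in $u$). For coercivity the crucial point is the gap $0<q<p$: splitting $\Omega$ into $\{|v|\le R\}$ and $\{|v|>R\}$, where $R$ is the threshold beyond which $a(u)\ge k>0$ by assumption 1, one obtains a bound of the form $J(v)\ge \frac{k}{2}\int_{\{|v|>R\}}|\nabla v|^{p}\,dx-M\|v\|_{q}^{q}$. Applying a normalisation $w_{n}=v_{n}/\|v_{n}\|$ to a hypothetical unbounded minimising sequence of bounded energy shows, using $q<p$, that the limit direction has vanishing gradient on $\{w\neq0\}$; together with the homogeneous condition on $\Xi$ this forces $w=0$, contradicting $\|w\|=1$. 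This gives coercivity on each finite dimensional $K_{\lambda}$ and hence the existence of $u_{\lambda}$.

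For the second assertion, let $w$ be a classical minimizer of $J$ in $\mathscr{C}^{1}(\Omega)\cap\mathscr{C}_{0}(\Omega\cup\Xi)$. Since $w$ is continuous and vanishes on $\Xi$, its canonical extension $w\text{\textdegree}$ lies in $K$. Because $w\in\mathscr{C}^{1}$, property II of Definition~\ref{def:deri} gives $D(w\text{\textdegree})=(\nabla^{*}w)\text{\textdegree}$, and the corollary to Proposition~\ref{prop:carola} (stating $\sqint g\text{\textdegree}\,dx\sim\int g\,dx$ for $g\in L^{1}$) then yields $J\text{\textdegree}(w\text{\textdegree})\sim J(w)=\min J$. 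Comparing with the minimizer $u$ constructed above we get $st\big(J\text{\textdegree}(u)\big)\le J(w)$. To obtain the reverse inequality and the identification I would use the canonical splitting $u=\overline{w}\text{\textdegree}+\psi$ of Proposition~\ref{prop:carola}: passing to a convergent subnet via Proposition~\ref{prop:Ass} and invoking the lower semicontinuity of $J$, the functional part $\overline{w}$ is an admissible competitor with $J(\overline{w})\le st\big(J\text{\textdegree}(u)\big)\le J(w)$, so $\overline{w}$ is itself a classical minimizer and $st\big(J\text{\textdegree}(u)\big)=\min J$. Finally, the existence of the smooth minimizer $w$ rules out any genuine minimizing concentration, forcing the singular part $\psi$ to be negligible, whence $u=w\text{\textdegree}$.

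The main obstacle I anticipate is twofold. The delicate analytic point is the coercivity estimate under the degenerate weight $a$ — assumption 1 only gives $a\ge k$ for large $u$, which is precisely why the problem need not have a classical solution and must be handled by the blow-up/normalisation argument sketched above. The delicate nonstandard point is the final step of the identification: ruling out the singular part $\psi$ of $u$ and upgrading the infinitesimal comparison $J\text{\textdegree}(w\text{\textdegree})\sim J(w)$ to the exact equality $u=w\text{\textdegree}$. This requires carefully relating the pointwise integral $\sqint$ and the generalized derivative $D$ of $u$ to the standard functional $J$ through the canonical splitting and the lower semicontinuity hypotheses, since no convexity in the gradient is assumed.
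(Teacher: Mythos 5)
Your proof of the first assertion follows the paper's own argument essentially step by step: minimize at each finite-dimensional level $\lambda$ by the direct method (coercivity from assumption 1 together with $p>q$, lower semicontinuity from assumptions 2 and 3), set $u:=\lim_{\lambda\uparrow\Lambda}u_{\lambda}$, and pass the inequalities $J\text{\textdegree}(u_{\lambda})\le J\text{\textdegree}(v_{\lambda})$ to the $\Lambda$-limit. Your explicit bookkeeping of the constraint sets $K_{\lambda}$ and $K=\lim_{\lambda\uparrow\Lambda}K_{\lambda}$ is, if anything, more careful than the paper, which suppresses the boundary condition at the finite levels. One caution on your coercivity sketch: the spaces $V_{\lambda}\subset V(\Omega)$ contain discontinuous Caccioppoli functions such as $\theta_{E}$ with $E$ compactly contained in $\Omega$; for these the classical gradient vanishes almost everywhere, so the blow-up step ``vanishing gradient on $\{w\neq0\}$ plus the homogeneous condition on $\Xi$ forces $w=0$'' is false if run with $\nabla$. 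It must be run with the generalized derivative $D$ (for which $D\theta_{E}\neq0$, being concentrated near $\partial E$), i.e.\ with $J\text{\textdegree}$ itself; this is the same level of vagueness as the paper's one-line appeal to coercivity, but worth flagging.

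The second assertion is where you have a genuine gap. The paper's reason it is ``trivial'' (made explicit in the proof of Theorem \ref{thm:rosso}) is an eventual-constancy observation: if the minimizer $w$ of $J$ lies in $V_{\Lambda}(\Omega)$, then $w\in V_{\lambda}$ eventually, and since $w$ minimizes $J$ over a class containing each $V_{\lambda}$, one may take $u_{\lambda}=w$ eventually, whence $u=\lim_{\lambda\uparrow\Lambda}u_{\lambda}=w\text{\textdegree}$. No compactness is involved. Your route instead invokes Proposition \ref{prop:carola}, weak convergence in $H^{1,p}(\Omega)$, and weak lower semicontinuity of $J$. None of this is available under assumptions 1, 2, 3 alone: there is no uniform $H^{1,p}$ bound, because $a$ may vanish and $\mathcal{H}^{N-1}(\Xi)$ may be zero --- these are precisely the \emph{additional} hypotheses of Theorem \ref{thm:rosso}; weak lower semicontinuity is obtained in the paper only via De Giorgi's theorem under those extra hypotheses; and the functional part $\overline{w}$ of $u$ need not be admissible, nor even finite (the paper notes in Section \ref{sec:One-simple-application} cases where the generalized minimizer is infinite at every point of $\Omega$). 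Finally, even where your argument runs, it only yields that $\psi$ is distributionally null, i.e.\ $u\sim w\text{\textdegree}$ tested against $\mathscr{D}(\Omega)$, which is strictly weaker than the claimed identity $u=w\text{\textdegree}$ of ultrafunctions; ``ruling out concentration'' is not a substitute for the eventual-constancy argument above.
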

\begin{proof}
The proof is based on a standard approximation by finite dimensional
spaces. Let us observe that, for each finite dimensional space $V_{\lambda}$,
we can consider the approximate problem: find $u_{\lambda}\in V_{\lambda}$
such that

\[
J\text{\textdegree}(u_{\lambda})=\min_{v_{\lambda}\in V_{\lambda}}J\text{\textdegree}(v_{\lambda}).
\]
The above minimization problem has a solution, being the functional
coercive, due to the hypotheses on $a(\cdot)$ and the fact that $p>q$.
If we take a miminizing sequence $u_{\lambda}^{n}\in V_{\lambda}$,
then we can extract a subsequence weakly converging to some $u_{\lambda}\in V_{\lambda}.$
By observing that in finite dimensional spaces all norms are equivalent,
it follows also that $u_{\lambda}^{n}\to u_{\lambda}$ pointwise.
Then, by the lower-semicontinuity of $a$ and $f$, it follows that
the pointwise limit satisfies

\[
J\text{\textdegree}(u_{\lambda})\leq\liminf J\text{\textdegree}(u_{\lambda}^{n}).
\]
Next, we use the very general properties of $\Lambda$-limits, as
introduced in Section \ref{subsec:Lambda-limit}. We set
\[
u:=\lim_{\lambda\uparrow\Lambda}u_{\lambda}.
\]
Then, taking a generic $v:=\lim_{\lambda\uparrow\Lambda}v_{\lambda}$,
from the inequality $J\text{\textdegree}(u_{\lambda})\leq J\text{\textdegree}(v_{\lambda})$,
we get
\[
J\text{\textdegree}(u)\leq J\text{\textdegree}(v)\qquad\forall v\in V_{\Lambda}(\Omega).
\]
The last statement is trivial.
\end{proof}
Clearly, under this generality, the solution $u$ could be very wild;
however, we can state a regularization result which allows the comparison
with variational and classical solutions.
\begin{thm}
\label{thm:rosso}Let the assumptions of the above theorem hold. If
\[
\mathcal{H}^{N-1}(\Xi)>0
\]
and there exists $\nu\in\mathbb{R}$ such that
\[
a(u)\geq\nu>0,\,\,\,
\]
then, the minimizer has the following form
\[
u=w^{\circ}+\psi,
\]
where $w\in H^{1,p}(\Omega)$ and $\psi$ is null in the sense of
distributions, namely
\[
\forall\varphi\in\mathscr{D(\mathrm{\Omega})},\,\,\sqint\psi\varphi^{*}dx\sim0.
\]
In this case 
\[
J\text{\textdegree}(u)\sim\underset{v\in V(\mathrm{\Omega})}{inf}J\text{\textdegree}(v)
\]
with $V(\Omega)$ as in Definition \ref{def:AB}. Moreover, if in
addition $a(u)<M$, with $\,M\in\mathbb{R}$, we have that 
\[
\left\Vert \psi\right\Vert _{H^{1,p}(\Omega)}\sim0
\]
and $J\text{\textdegree}(u)\sim J(w)$. Finally, if \textup{$J(u)$
}\textup{\emph{has a minimizer in }}\textup{$w\in H^{1,p}(\Omega)\cap\mathscr{C}(\Omega)$,
}\textup{\emph{then }}$u=w^{\circ}$ and $J\text{\textdegree}(u)=J(w)$. 
\end{thm}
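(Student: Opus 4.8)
The plan is to use the representation of the minimizer furnished by Theorem \ref{thm:verde}, namely $u=\lim_{\lambda\uparrow\Lambda}u_{\lambda}$ with $u_{\lambda}\in V_{\lambda}$ a minimizer of $J\text{\textdegree}$ on the finite dimensional space $V_{\lambda}$, and to transport classical Sobolev compactness through the canonical splitting machinery of Proposition \ref{prop:carola}. First I would upgrade the coercivity. Since now $a(u)\ge\nu>0$ \emph{uniformly} and $|f(x,u)|\le M|u|^{q}$ with $q<p$, one has
\[
J\text{\textdegree}(u_{\lambda})\ \ge\ \frac{\nu}{2}\sqint_{\Omega}|Du_{\lambda}|^{p}\,dx\ -\ M\sqint_{\Omega}|u_{\lambda}|^{q}\,dx .
\]
Comparing $u_{\lambda}$ with the admissible competitor $0\in V_{\lambda}$ (which vanishes on $\Xi^{*}$) gives $J\text{\textdegree}(u_{\lambda})\le J\text{\textdegree}(0)$, a fixed finite number. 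Because $\mathcal{H}^{N-1}(\Xi)>0$ and each $u_{\lambda}$ vanishes on $\Xi$, a Poincaré inequality is available on the class of functions vanishing on $\Xi$; together with the superlinear-versus-sublinear balance $p>q$, this converts the energy bound into a uniform control $\|u_{\lambda}\|_{H^{1,p}(\Omega)}\le C$ on a qualified set of $\lambda$.

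Second, since $1<p<\infty$ makes $H^{1,p}(\Omega)$ reflexive, the bounded net $\{u_{\lambda}\}$ has a well defined weak $\Lambda$-limit, and Proposition \ref{prop:carola} applies with $W=H^{1,p}(\Omega)$: there is a subnet $u_{n}=u_{\lambda_{n}}$ converging weakly to some $w\in H^{1,p}(\Omega)$, which inherits the vanishing trace on $\Xi$ (the trace is weakly continuous), and $w$ is exactly the functional part of the canonical splitting $u=w^{\circ}+\psi$, with $\sqint\psi\,v\,dx\sim0$ for every $v\in H^{1,p}(\Omega)$. Specializing to $v=\varphi$, $\varphi\in\mathscr{D}(\Omega)$, yields precisely $\sqint\psi\varphi^{*}\,dx\sim0$, so $\psi$ is null in the sense of distributions. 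Equivalently $w=St_{H^{1,p}_{weak}}(u)$.

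Third comes the energy comparison, which I expect to be the main obstacle. The inequality $J\text{\textdegree}(u)\le\inf_{v\in V(\Omega)}J\text{\textdegree}(v)$ is immediate from the global minimality of $u$ over $V_{\Lambda}(\Omega)$ together with $V(\Omega)^{\sigma}\subset V_{\Lambda}(\Omega)$, since each $v^{*}$ is an admissible test ultrafunction. The reverse, $st\big(J\text{\textdegree}(u)\big)\ge J(w)\ge\inf_{v\in V(\Omega)}J(v)$, is the heart of the matter: one must pass the nonstandard energy to the classical limit by weak lower semicontinuity of the convex integrand $\xi\mapsto a(s)|\xi|^{p}$, using the lower semicontinuity of $a$ and $f$, and simultaneously show that the singular part $\psi$ cannot decrease the limiting energy. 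Convexity of $|\cdot|^{p}$ and $a\ge\nu>0$ make the gradient functional weakly lsc, so any gradient energy carried by $\psi$ is non-negative; combined with the density of the continuous part of $V(\Omega)$ in $H^{1,p}(\Omega)$ and the transfer identity $st\big(J\text{\textdegree}(v^{*})\big)=J(v)$, this squeezes $st\big(J\text{\textdegree}(u)\big)$ between $J(w)$ and $\inf_{V(\Omega)}J$, giving $J\text{\textdegree}(u)\sim\inf_{v\in V(\Omega)}J\text{\textdegree}(v)$.

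Finally, if in addition $a(u)<M$, the integrand is uniformly convex in the gradient, so weak convergence of $u_{n}$ together with convergence of the energies forces \emph{strong} convergence $u_{n}\to w$ in $H^{1,p}(\Omega)$; the last assertion of Proposition \ref{prop:carola} then yields $\|\psi\|_{H^{1,p}(\Omega)}\sim0$ and hence $J\text{\textdegree}(u)\sim J(w)$. In the case where $J$ possesses a classical minimizer $w\in H^{1,p}(\Omega)\cap\mathscr{C}(\Omega)$, continuity of $w$ gives $w^{\circ}=w^{*}$ at all finite points and hence the exact energy transfer $J\text{\textdegree}(w^{\circ})=J(w)$; minimality of $u$ and the lower bound of the third step then sandwich $J\text{\textdegree}(u)=J(w)$, while the concluding statement of Theorem \ref{thm:verde} (uniqueness, via strict convexity) forces $u=w^{\circ}$.
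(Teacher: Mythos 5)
Your first two steps reproduce the paper's own argument almost verbatim: the uniform bound from $a\ge\nu>0$ together with the generalized Poincar\'e inequality coming from $\mathcal{H}^{N-1}(\Xi)>0$ gives the a priori $H^{1,p}$ estimate on the approximating minimizers $u_{\lambda}$, and Proposition \ref{prop:carola} (with $W=H^{1,p}(\Omega)$, using reflexivity) then produces the splitting $u=w^{\circ}+\psi$ with $\psi$ null in the sense of distributions. Likewise your treatment of the case $a(u)<M$ (weak lower semicontinuity plus convergence of energies forcing strong convergence, then the last clause of Proposition \ref{prop:carola}) is the same as the paper's, which cites De Giorgi's semicontinuity theorem for the l.s.c.\ part. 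For the middle claim $J\text{\textdegree}(u)\sim\inf_{v\in V(\Omega)}J\text{\textdegree}(v)$ you actually offer more than the paper does (the paper essentially asserts it); note that the reverse inequality can be obtained without any semicontinuity at all, from monotonicity of the nested approximations: for every standard $\lambda$ one has $u_{\lambda}\in V_{\lambda}\subset V(\Omega)$, so $\inf_{v\in V(\Omega)}J\text{\textdegree}(v)\le J(u_{\lambda})$, and $st\bigl(J\text{\textdegree}(u)\bigr)=\lim_{\lambda\rightarrow\Lambda}J(u_{\lambda})=\inf_{\lambda}J(u_{\lambda})$ since the net $J(u_{\lambda})$ is decreasing; your density-plus-l.s.c.\ squeeze is shakier, because $J$ is only lower semicontinuous and approximating $w$ in norm does not control $J$ from above.

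The genuine flaw is in your last step. There is no strict convexity anywhere in this problem: $a$ is an arbitrary lower semicontinuous function pinched between $\nu$ and $M$, $f(\cdot,u)$ is merely l.s.c.\ with subcritical growth, and the integrand $\tfrac12 a(s)|\xi|^{p}+f(x,s)$ is not convex in $(s,\xi)$; consequently minimizers of $J\text{\textdegree}$ need not be unique, and Theorem \ref{thm:verde} contains no uniqueness claim. Your energy sandwich therefore proves only that $w^{\circ}$ is \emph{a} minimizer with $J\text{\textdegree}(w^{\circ})=J(w)=J\text{\textdegree}(u)$; it cannot identify $u$ with $w^{\circ}$. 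The mechanism actually used by the paper (and hiding behind the ``trivial'' concluding statement of Theorem \ref{thm:verde}) is constructive, not variational: since $w\in H^{1,p}(\Omega)\cap\mathscr{C}(\Omega)\subset V_{\Lambda}(\Omega)$, the function $w$ lies in $V_{\lambda}$ eventually and minimizes $J$ there, so the approximating minimizers may be chosen with $u_{\lambda}=w$ eventually, whence $u=\lim_{\lambda\uparrow\Lambda}u_{\lambda}=w^{\circ}$ and $\psi=0$ exactly (not merely infinitesimally). In other words, the identity $u=w^{\circ}$ is a statement about the particular minimizer built in Theorem \ref{thm:verde} as a $\Lambda$-limit, and it must be proved at the level of the approximating net; any route that tries to deduce it from energy equality alone needs a uniqueness hypothesis that is simply not available here.
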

\begin{proof}
Under the above hypotheses the minimization problem has an additional
a priori estimates in $H^{1,p}(\Omega),$ due to the fact that $a(\cdot)$
is bounded away from zero. Moreover, the fact that the function vanishes
on a non trascurable $(N-1)$-dimensional part of the boundary, shows
that the generalized Poincar� inequality holds true. Hence, by Proposition
\ref{prop:carola}, the approximating net $\left\{ u_{\lambda}\right\} $
has a subnet $\left\{ u_{n}\right\} $ such that
\[
u_{n}\rightarrow u\quad\text{weakly in}\quad H^{1,p}(\Omega).
\]
This proves the first statement, since obviously, $\psi:=u-w^{\circ}$
vanishes in the sense of distributions. In this case, in general the
minimum is not achieved in $V(\mathrm{\Omega})$ and hence $J\text{\textdegree}(w^{\circ}+\psi)<J(w)$.

Next, if $a(\cdot)$ is bounded also from above, by classical results
of semicontinuity of De Giorgi (see Boccardo \cite{boccardo} Section
9, Thm. 9.3) $J$ is weakly l.s.c. Thus $u$ is a minimizer and, by
well known results, $u_{n}\rightarrow u$ strongly in $H^{1,p}(\Omega).$
This implies, by Proposition \ref{prop:carola}, that $u\sim w^{\circ},$
and hence $\psi$ is infinitesimal in $H^{1,p}(\Omega),$ proving
the second part. 

Finally, if the minimizer is a function $w\in H^{1,p}(\Omega)\cap\mathscr{C}(\Omega)\subset V_{\Lambda}(\Omega)$,
we have that $u_{\lambda}=w\text{\textdegree}$ eventually; then 

\[
\left\Vert \psi\right\Vert _{H^{1,p}(\Omega)}=0.
\]
\end{proof}

\subsection{The Poisson problem in $\mathbb{R}^{2}$}

Now we cosider this very classical problem:
\begin{equation}
-\triangle u=\varphi(x),\,\,\,\,\varphi\in\mathscr{D}(\mathbb{R}^{N}).\label{eq:poisson}
\end{equation}

If $N\geq3$, the solution is given by 
\[
\varphi(x)\ast\frac{|x|^{-N+2}}{(N-2)\omega_{N}}
\]
and it can be characterized in several ways. 

First of all, it is the only solution the Schwartz space $\mathscr{S'}$
of tempered distributions obtained via the equation 
\begin{equation}
\widehat{u}(\xi)=\frac{\widehat{\varphi}(\xi)}{|\xi|^{2}}\label{eq:maria}
\end{equation}
where $\widehat{T}$ denotes the Fourier tranform of $T$.

Moreover, it is the minimizer of the Dirichlet integral
\[
J(u)=\int\left[\frac{1}{2}|\nabla u(x)|^{2}-\varphi(x)u(x)\right]dx
\]
in the space $\mathscr{D}^{1,2}(\mathbb{R}^{N})$ which is defined
as the completion of $\mathcal{C}^{1}\left(\mathbb{R}^{N}\right)$
with respect to the Dirichlet norm
\[
\left\Vert u\right\Vert =\sqrt{\int|\nabla u(x)|^{2}dx}.
\]

Each of these characterizations provides a different method to prove
its existence.

The situation is completely different when $N=2.$ In this case, it
is well known that the fundamental class of solutions is given by
\[
2\pi\cdot\varphi(x)\ast log|x|,
\]
 however none of the previus characterization makes sense. In fact,
we cannot use equation (\ref{eq:maria}), since $\frac{1}{|\xi|^{2}}\notin L_{loc}^{1}(\mathbb{R}^{2})$
and hence $\frac{1}{|\xi|^{2}}$ does not define a tempered distribution.
Also, the space $\mathscr{D}^{1,2}(\mathbb{R}^{2})$ is not an Hilbert
space and the functional $J(u)$ is not bounded from below in $\mathscr{D}^{1,2}(\mathbb{R}^{2})$. 

On the contrary, using the theory of ultrafunctions, we can treat
equation (\ref{eq:poisson}) independently of the dimension. 

First of all, we recall that in equation (\ref{eq:poisson}) with
$N\geq3$, the boundary conditions are replaced by the condition $u\in\mathscr{D}^{1,2}(\mathbb{R}^{N})$.
This is a sort of Dirichlet boundary condition. In the theory of ultrafunctions
it is not necessary to replace the Dirichlet boundary condition with
such a trick. In fact we can reformulate the problem in the following
way: find $u\in V_{\Lambda}(B_{R})$ such that 
\begin{align*}
-\triangle\text{\textdegree}u & =\varphi\text{\textdegree}(x)\,\,\,in\,\,B_{R}\\
u & =0\,\,\,\,\,\,\,\,\,\,\,on\,\,\partial B_{R}
\end{align*}
where $\triangle\text{\textdegree}$ is the ``generalized'' Laplacian
defined in Section \ref{subsec:gelsomina} and $R$ is an infinite
number such that $\chi_{_{B_{R}}}\in V_{\Lambda}(\mathbb{R}^{N})$.
\footnote{Such an $R$ exists by overspilling (see e.g. \cite{rob,keisler76,nelson});
in fact for any $r\in\mathbb{R}$, $\chi_{_{B_{r}}}\in V_{\Lambda}(\mathbb{R}^{N})$. }

Clearly, the solutions of the above problem are the minimizers of
the Dirichlet integral
\[
J\text{\textdegree}(u)=\sqint\left[\frac{1}{2}|Du(x)|^{2}-\varphi\text{\textdegree}(x)u(x)\right]dx
\]
in the space $u\in V_{\Lambda}(B_{R})$, with the Dirichlet boundary
condition. Notice that, in the case of ultrafunctions, the problem
has the same structure independently of $N$. In order to prove the
existence, we can use Theorem \ref{thm:verde}\footnote{The fact that $\Omega$ is a standard set while $B_{R}$ is an internal
set does not change the proof.}. The fact that $J\text{\textdegree}(u)$ may assume infinite values
does not change the structure of the problem and shows the utility
of the use of infinite quantities. The relation between the classical
solution $w$ and the ultrafunction $u$ is given by 
\[
u=w\text{\textdegree}+\psi
\]
with
\[
St_{\mathscr{D}'}\psi=0.
\]

Some people might be disappointed that $u$ depends on $R$ and it
is not a standard function; if this is the case it is sufficient to
take
\[
w=St_{\mathscr{D}'}u
\]
and call $w$ \emph{the standard solution of the Poisson problem with
Dirichlet boundary condition at $\infty$.} In this way we get the
usual fundamental class of solutions and they can be characterized
in the usual way also in the case $N=2$. Concluding, in the framework
of ultrafunctions, the Poisson problem with Dirichlet boundary condition
is the \emph{\uline{same}} problem independently of the space dimension
and and it is very similar to the same problem when $R$ is finite. 

This fact proves that the use of infinite numbers is an advantage
which people should not ignore.

\subsection{An explicit example}

If the assumptions of Theorem \ref{thm:rosso} do not hold true, the
solution could not be related to any standard object. For example,
if $\mathcal{H}^{N-1}(\Xi)=0$ and $f(x,u)>k|u|^{s},\,\,(p<N,\,\,k>0,\,\,0<s<q)$,
the generalized solution $u(x)$ takes infinite values for every $x\in\Omega$.
However, there are cases in which $u(x)$ can be identified with a
standard and meaningful function, but the minimization problem makes
no sense in the usual mathematics. In the example which we will present
here, we deal with a functional which might very well represent a
physical model, even if the explicit solution cannot be interpreted
in a standard world, since it involves the square of a measure (namely
$\delta{}^{2}$).

Let us consider, for $\gamma>0$, the one dimensional variational
problem of finding the minimum of the functional 
\begin{equation}
J(u)=\int_{0}^{1}\frac{1}{2}a(u)|u'(x)|^{2}-\gamma u(x)\,dx\label{eq:gei}
\end{equation}
among the functions such that $u(0)=0$. In particular we are interested
in the case in which $a$ is the following degenerate function
\[
a(s)=\left\{ \begin{aligned} & 1\quad &  & \text{ if }s\in\left(-\infty,1\right)\cup\left(2,+\infty\right),\\
 & 0\quad &  & \text{ if }s\in\left[1,2\right].
\end{aligned}
\right.
\]
Formally, the Euler equation, if $u\notin\left[1,2\right]$, is 
\[
u''(x)=-\gamma.
\]
We recall that, by standard arguments, 
\[
u(1)\neq1\Rightarrow u'(1)=0.
\]
Hence, if $\gamma<2$, the solution is explicitly computed 
\[
u(x)=\frac{\gamma}{2}(2x-x^{2}),
\]
since it turns out that $0\leq u(x)<1$ for all $x\in(0,1)$ and then
the degeneracy does not take place.

If $\gamma>2$, we see that the solution does not live in $H^{1}(0,1)$,
hence the problem has not a ``classical'' weak solution. More exactly
we have the following result:~
\begin{thm}
If \textup{$\gamma>2$ }\textup{\emph{then the functional (\ref{eq:gei})
has a unique minimizer given by }}
\[
u(x)=\left\{ \begin{aligned} & \frac{1}{2}\left(2\gamma x-\gamma x^{2}\right)\quad & 0<x<\xi\\
 & \frac{1}{2}\left(-\gamma x^{2}+2\gamma x+2\right)\quad & \xi<x<1
\end{aligned}
\right.
\]
where \textup{$\xi\in(0,1)$ }\textup{\emph{is a suitable real number
which depends on $\gamma$ (see }}Figure 2).
\end{thm}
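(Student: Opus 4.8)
The plan is to combine the abstract existence statement of Theorem~\ref{thm:verde} with an explicit integration of the Euler--Lagrange equation, the point being that $a$ vanishes identically on the band $[1,2]$. First I would verify that (\ref{eq:gei}) meets the hypotheses of Theorem~\ref{thm:verde}: here $p=2$, the inhomogeneity $\gamma u$ has linear growth (so one may take $q=1<p$), and the given $a$ is nonnegative, lower semicontinuous, and satisfies $a(s)\ge 1$ for $|s|$ large. Hence the associated functional $J\text{\textdegree}$ possesses a minimizer $u\in V_{\Lambda}((0,1))$ subject to $u=0$ at the boundary point; existence is thus settled and the whole task is to \emph{identify} this minimizer.

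Next I would compute the first variation by means of the integration-by-parts rule (property III of Definition~\ref{def:deri}), which gives $\sqint (a^{*}(u)\,Du\,Dv-\gamma v)\,dx=0$ for every $v\in V_{\Lambda}$ vanishing at $0$; equivalently $-D(a^{*}(u)\,Du)=\gamma$ together with the natural condition $Du=0$ at the right endpoint $x=1$. On the region where $u<1$ or $u>2$ one has $a^{*}(u)=1$ and the relation reduces to $-D^{2}u=\gamma$, whose solutions are the downward parabolas $u(x)=-\tfrac{\gamma}{2}x^{2}+bx+c$. The condition $u(0)=0$ fixes the lower arc, while $Du(1)=0$ fixes the slope of the upper arc.

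The heart of the argument is the behaviour on the degenerate band $\{1\le u\le 2\}$, where $a^{*}(u)=0$ and the gradient term leaves the energy entirely. Here I would show that the optimal profile makes a single passage from the level $u=1$ to the level $u=2$ at no gradient cost, and that $Du$ does \emph{not} jump across it, so that the two parabolic arcs are glued with the common slope $\gamma(1-x)$. This is exactly where the explicit construction of $V_{\Lambda}(\Omega)$ in Section~\ref{sec:FSU} and the locality of $D$ (property I of Definition~\ref{def:deri}) enter: the passage is confined to a single monad and the local derivative enforces the matching of the one-sided slopes instead of creating a spurious corner. Granting this, the lower arc must be $u(x)=\tfrac{\gamma}{2}(2x-x^{2})$, and the transition occurs where it attains the value $1$, i.e. at the point $\xi$ with $\tfrac{\gamma}{2}(2\xi-\xi^{2})=1$; for $\gamma>2$ the admissible root is $\xi=1-\sqrt{1-2/\gamma}\in(0,1)$. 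The upper arc is then forced to be $u(x)=\tfrac{1}{2}(-\gamma x^{2}+2\gamma x+2)$, which reproduces the stated formula.

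Uniqueness would follow because the two arcs, their slopes, the transition levels $1$ and $2$, and hence $\xi$, are all pinned down by the boundary data together with these matching conditions, while the Lagrangian $\tfrac12|Du|^{2}-\gamma u$ is convex on each non-degenerate arc. I expect the decisive obstacle to be precisely the analysis of the degenerate band: justifying, within the pointwise-integral calculus, that the jump from $1$ to $2$ costs no energy, that it is realized at a single point, and that the one-sided derivatives match, so that $\xi$ is determined by $\tfrac{\gamma}{2}(2\xi-\xi^{2})=1$. This is the step in which the ultrafunction setting genuinely parts company with the classical $H^{1}$ picture, where for $\gamma>2$ the problem has no minimizer at all.
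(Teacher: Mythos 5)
Your reduction to two parabolic arcs joined by a single free jump through the degenerate band $[1,2]$ agrees with the paper, but the matching condition you impose at the jump is wrong, and this is exactly where your proof and the paper's proof part ways. You assert that $Du$ does not jump, so that both arcs carry the common slope $\gamma(1-x)$ and the transition point is pinned by $\frac{\gamma}{2}(2\xi-\xi^{2})=1$, i.e. $\xi_{*}=1-\sqrt{1-2/\gamma}$. The paper does something different: it evaluates $J$ on the \emph{whole} one-parameter family of admissible single-jump profiles --- for a jump at $\xi$ the lower arc is the parabola with $u''=-\gamma$, $u(0)=0$, $u(\xi)=1$ (hence slope $\frac{1}{\xi}+\frac{\gamma\xi}{2}-\gamma x$, which is \emph{not} $\gamma(1-x)$ in general), the upper arc has $u(\xi)=2$, $u'(1)=0$ --- obtaining the explicit function $F(\xi)=\frac{\gamma^{2}\xi^{3}}{8}-\frac{\gamma^{2}\xi^{2}}{2}+\frac{\gamma^{2}\xi}{2}-\frac{\gamma^{2}}{6}+\frac{3\gamma\xi}{2}-2\gamma+\frac{1}{2\xi}$, and then locates its minimum by a calculus study of $F'$, $F''$, $F'''$ (including a separate argument for $\gamma\geq 2(3+2\sqrt{2})$, where $F(\xi_{1})$ must be compared with $F(1)$ --- a step your approach has no analogue of). The stationarity condition $F'(\xi)=0$ is not continuity of the slope; it is the Weierstrass--Erdmann/Hamiltonian condition $\frac{1}{2}|u_{-}'(\xi)|^{2}+\gamma u_{-}(\xi)=\frac{1}{2}|u_{+}'(\xi)|^{2}+\gamma u_{+}(\xi)$, i.e. $|u_{-}'(\xi)|^{2}=|u_{+}'(\xi)|^{2}+2\gamma$. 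On a slope-matched profile one gets instead $F'(\xi_{*})=\gamma\left(u_{+}(\xi_{*})-u_{-}(\xi_{*})\right)=\gamma>0$ (this also follows by plugging $\frac{1}{\xi_{*}}=\gamma-\frac{\gamma\xi_{*}}{2}$ into the paper's formula for $F'$): sliding the jump slightly to the left puts the solution one unit higher on an extra interval, and this gain strictly beats the extra gradient cost of the steeper lower arc. So your profile is not even a critical point within the admissible class, the equation $\frac{\gamma}{2}(2\xi-\xi^{2})=1$ does not determine the optimal $\xi$, and the uniqueness argument you build on those matching conditions collapses with it. Nothing in the ultrafunction machinery rescues this: locality of $D$ (property I of Definition \ref{def:deri}) controls supports, it does not force $C^{1}$ gluing of the two arcs across a monad.

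A caveat you should be aware of: what you derived is exactly the formula displayed in the theorem (the paper's $\tilde{u}$, its Figure 2), so in a formal sense you "reproduced the statement". But the paper's own proof shows $J(u_{\xi_{1}})=F(\xi_{1})<F(\xi_{*})=J(\tilde{u})$, where $\xi_{1}$ is the zero of $F'$; that is, the statement and the proof of the paper are not fully consistent with each other, and the proof is on the side of the non-matching slopes. The displayed formula is compatible with the proof only if one reads the ``suitable $\xi$'' as $\xi_{1}$ and replaces the lower arc by $-\frac{\gamma}{2}x^{2}+\bigl(\frac{1}{\xi_{1}}+\frac{\gamma\xi_{1}}{2}\bigr)x$. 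Whichever way that discrepancy is resolved, the energy comparison above shows that the slope-matching principle your identification and uniqueness steps rely on is false; this is precisely the ``decisive obstacle'' you flagged in your final paragraph, and it cannot be patched without switching to the paper's strategy (or equivalently to the correct corner condition).
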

\begin{proof}
First, we show that the generalized solution has at most one discontinuity.
In fact, for $\gamma>2$ the solution satisfies $u(\xi)=1$, for some
$0<\xi<1$, and at that point the classical Euler equations are not
anymore valid. On the other hand, where $u>2$, the solution satisfies
a regular problem, hence we are in the situation of having at least
the following possible candidate as solution with a jump at $\xi=\frac{\gamma-\sqrt{\gamma^{2}-2\gamma}}{\gamma}=1-\sqrt{1-\frac{2}{\gamma}}$~
and a discontinuity of derivatives at some $\xi<\eta<1$. 
\begin{figure}[h]
\centering \includegraphics[width=7cm]{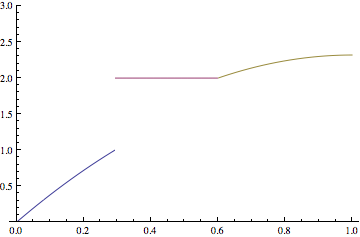}\label{fig:1}\caption{The function ${u}(x)$ for $\gamma=4$}
\end{figure}
In the specific case, we have (see Figure 1)
\[
u(x)=\left\{ \begin{aligned} & \frac{1}{2}\left(2\gamma x-\gamma x^{2}\right)\quad & 0<x<\xi\\
 & 2\quad & \xi<x<\eta\\
 & \frac{\gamma\eta^{2}}{2}-\gamma\eta-\frac{\gamma x^{2}}{2}+\gamma x+2\quad & \eta<x<1.
\end{aligned}
\right.
\]
We now show that this is not possible because the functional takes
a lower value on the solution with only a jump at $x=\xi$. In fact,
if we consider the function $\tilde{u}(x)$ defined as follows
\[
\tilde{u}(x)=\left\{ \begin{aligned} & \frac{1}{2}\left(2\gamma x-\gamma x^{2}\right)\quad & 0<x<\xi\\
 & \frac{1}{2}\left(-\gamma x^{2}+2\gamma x+2\right)\quad & \xi<x<1
\end{aligned}
\right.
\]
we observe that $u=\tilde{u}$ in $[0,\xi]$, while $u'=\tilde{u}'=\gamma(1-x)$
for all $x\notin[\xi,\eta]$ and, by explicit computations, we have
\[
J(\tilde{u})-J(u)=\gamma^{2}\left[-\frac{\eta}{2}+\frac{\eta^{2}}{2}-\frac{\eta^{3}}{6}+\frac{\xi}{2}-\frac{\xi^{2}}{2}+\frac{\xi^{3}}{6}\right]=\gamma^{2}(\Phi(\eta)-\Phi(\xi))<0,\quad\xi<\eta
\]
where $\Phi(s)=-\frac{s}{2}+\frac{s^{2}}{2}-\frac{s^{3}}{6}$ is strictly
decreasing, since $\Phi'(s)=-\frac{1}{2}(s-1)^{2}\leq0$. 
\begin{figure}[h]
\centering \includegraphics[width=7cm]{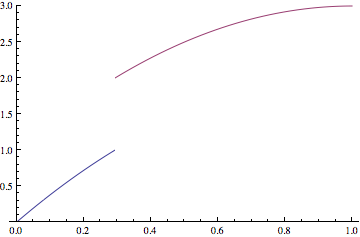} \label{fig:2}\caption{The function $\tilde{u}(x)$ for $\gamma=4$}
\end{figure}

Actually, the solution is the one shown in Figure 2. Next we show
that there exists a unique point $\xi$ such that the minimum is attained.
We write the functional $J(u)$, on a generic solution with a single
jump from the value $u=1$ to the value $u=2$ at the point $0<\xi<1$
and such that the Euler equation is satisfied before and after $\xi$.
We obtain the following value for the functional (in terms of the
point $\xi$) 
\[
J(u)=F(\xi)=\frac{\gamma^{2}\xi^{3}}{8}-\frac{\gamma^{2}\xi^{2}}{2}+\frac{\gamma^{2}\xi}{2}-\frac{\gamma^{2}}{6}+\frac{3\gamma\xi}{2}-2\gamma+\frac{1}{2\xi}.
\]
We observe that, $\forall\,\gamma>2$ 
\[
F(0^{+})=+\infty\qquad\text{and}\qquad F(1)=-\frac{\gamma^{2}}{24}-\frac{\gamma}{2}+\frac{1}{2}<0.
\]
To study the behavior of $F(\xi)$ one has to solve some fourth order
equations (this could be possible in an explicit but cumbersome way),
so we prefer to make a qualitative study. We evaluate 
\[
\begin{aligned}F'(\xi) & =\frac{3\gamma^{2}\xi^{2}}{8}-\gamma^{2}\xi+\frac{\gamma^{2}}{2}+\frac{3\gamma}{2}-\frac{1}{2\xi^{2}},\\
F''(\xi) & =\frac{3\gamma^{2}\xi}{4}-\gamma^{2}+\frac{1}{\xi^{3}},\\
F'''(\xi) & =\frac{3\gamma^{2}}{4}-\frac{3}{\xi^{4}},
\end{aligned}
\]
hence we have that 
\[
F'''(\xi)<0\quad\text{ if and only if }\quad0<\xi<\sqrt{\frac{2}{\gamma}}<1.
\]
Consequently the function $F''(\xi)$, which nevertheless satisfies
$\forall\,\gamma>2$ 
\[
F''(0^{+})=+\infty,\qquad F''(1)=1-\frac{\gamma^{2}}{4}<0,
\]
has a unique negative minimum at the point $\sqrt{\frac{2}{\gamma}}$.
\begin{figure}[h]
\centering \includegraphics[width=7cm]{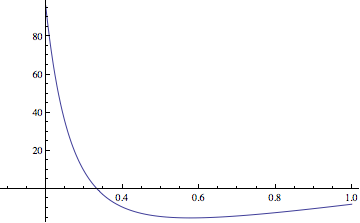}\caption{$F''(\xi)$ for $\gamma=4$}
\end{figure}

From this, we deduce that there exists one and only one point $0<\xi_{0}<\sqrt{\frac{2}{\gamma}}$
such that 
\[
\begin{aligned} & F''(\xi)>0 & \qquad0<\xi<\xi_{0}\\
 & F''(\xi)<0 & \qquad\xi_{0}<\xi\leq1.
\end{aligned}
\]
From the sign of $F''$ we get that $F'$ is strictly increasing in
$(0,\xi_{0})$ and decreasing in $(\xi_{0},1)$. Next $\forall\,\gamma>2$
\[
F'(0^{+})=-\infty,\qquad F'(1)=-\frac{\gamma^{2}}{8}+\frac{3\gamma}{2}-\frac{1}{2}\qquad
\]
hence, in the case that 
\[
-\frac{\gamma^{2}}{8}+\frac{3\gamma}{2}-\frac{1}{2}>0\quad\text{ that is }\gamma<2\left(3+2\sqrt{2}\right)\sim11.656...,
\]
then $F'$ has a single zero $\xi_{1}\in(0,\xi_{0})$ and, being a
change of sign, $\xi_{1}$ is a point of absolute minimum for $F(\xi)$.

If $\gamma\geq2\left(3+2\sqrt{2}\right)$ the above argument fails.

\begin{figure}[h]
\centering \includegraphics[width=7cm]{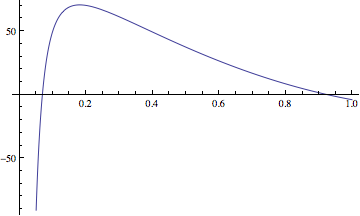}\caption{$F'(\xi)$ for $\gamma=14$}
\end{figure}

In this case we can observe that 
\[
F'(1/\gamma)=\frac{\gamma}{2}+\frac{3}{8}>0,
\]
hence $F'(1)$, which is negative at $\xi=1$ and near $\xi=0$ vanishes
exactly two times, at the point $\xi_{1}$, which is a point of local
minimum and at another point $\xi_{2}>\xi_{1}$, which is a point
of local maximum. Hence, to find the absolute minimum, we have to
compare the value of $F(\xi_{1})$ with that of $F(1)$.

In particular, we have that $\xi_{1}<\sqrt{2/\gamma}$ hence, we can
show that the minimum is not at $\xi=1$ simply by observing that
we can find at least a point where $F(\xi)<F(1)$ and this point is
$\sqrt{2/\gamma}$. In fact 
\[
M(\gamma):=F(\sqrt{2/\gamma})-F(1)=\frac{\gamma^{3/2}}{\sqrt{2}}-\frac{\gamma^{2}}{8}-\frac{5\gamma}{2}+2\sqrt{2}\sqrt{\gamma}-\frac{1}{2}\leq0.
\]
In particular $M(2)=0$ and 
\[
M'(\gamma)=\frac{1}{4}\left(-\gamma+3\sqrt{2}\sqrt{\gamma}+\frac{4\sqrt{2}}{\sqrt{\gamma}}-10\right)<0.
\]
This follows since by the substituting $\sqrt{\gamma}\to\chi$, we
have to control the sign of the cubic 
\[
\tilde{M}(\chi)=-\chi^{3}+3\sqrt{2}\chi^{2}-10\chi+4\sqrt{2}
\]
which is negative for all $\chi\geq1$, since $\tilde{M}(1)=-11+7\sqrt{2}$,
while 
\[
\tilde{M}'(\chi)=-3\chi^{2}+6\sqrt{2}\chi-10,
\]
is a parabola with negative minimum.
\end{proof}
\begin{rem}
It could be interesting to study this problem in dimension bigger
that one, namely, to minimize
\begin{equation}
J\text{\textdegree}(u)=\sqint_{\Omega}\left(\frac{1}{2}a(u)|Du(x)|^{2}-\gamma u(x)\right)\,dx\label{eq:gei-1}
\end{equation}
in the set 
\[
\left\{ v\in V_{\Lambda}(\Omega)\,|\,\forall x\in\Xi^{*},\,u(x)=0\,\right\} 
\]
and in particular to investigate the structure of the singular set
of $u$, both in the general case and in some particular situations
in which it is possible to find explicit solutions (e.g. $\Omega=B_{R}(0)$).
\end{rem}

\end{document}